\tikzstyle{ghostfill} = [fill=white]
         \tikzstyle{ghostdraw} = [draw=black!50]
\tikzstyle arrowstyle=[scale=1]
\tikzstyle directed=[postaction={decorate,decoration={markings, mark=at position .65 with {\arrow[arrowstyle]{stealth}}}}]
\tikzstyle reverse directed=[postaction={decorate,decoration={markings, mark=at position .65 with {\arrowreversed[arrowstyle]{stealth};}}}]
\numberwithin{equation}{section}
\newtheorem{theorem}{Theorem}[section]
\newtheorem{lemma}[theorem]{Lemma}
\newtheorem{corollary}[theorem]{Corollary}
\newtheorem{conjecture}[theorem]{Conjecture}
\theoremstyle{definition}
\newtheorem{definition}[theorem]{Definition}
\newtheorem{def-prop}[theorem]{Definition-Proposition}
\newtheorem{remark}[theorem]{Remark}
\newtheorem{example}[theorem]{Example}
\newtheorem*{acknowledgement}{Acknowledgement}
\newtheorem{notation}[theorem]{Notation}
\DeclareMathOperator{\pol}{pol}
\DeclareMathOperator{\depth}{depth}
\newcommand{\ZZ}{{\mathbb Z}}
\newcommand{\NN}{{\mathbb N}}
\newcommand{\FF}{{\mathbb F}}
\def\B{{\mathcal B}}
\def\D{{\mathcal D}}
\def\F{{\mathcal F}}
\def\X{{\mathcal X}}
\def\H{{\mathcal H}}
\newcommand\bsa{{\boldsymbol a}}
\newcommand\bsx{{\boldsymbol x}}
\newcommand{\cupdot}{\mathbin{\mathaccent\cdot\cup}}
\def\w{\omega}
\def\1{{\bf 1}}
\def\0{{\bf 0}}
\begin{document}


\title[]{Edge ideals of oriented graphs}
\author[H\`{a}]{Huy T\`{a}i H\`{a}}
\address{Department of Mathematics\\
Tulane University\\
New Orleans, LA 70118 }
\email{tha@tulane.edu}

\author[Lin]{Kuei-Nuan Lin}
\address{Academic Affairs\\ 
Penn State Greater Allegheny\\ 
 McKeesport, PA 15132}
\email{kul20@psu.edu}

\author[Morey]{Susan Morey}
\address{Department of Mathematics\\
Texas State University\\
San Marcos, TX 78666
}
\email{morey@txstate.edu}

\author[Reyes]{Enrique Reyes}
\address{
Departamento de
Matem\'aticas\\
Centro de Investigaci\'on y de Estudios
Avanzados del
IPN\\
Apartado Postal
14--740 \\
07000 Mexico City, D.F.
}
\email{ereyes@math.cinvestav.mx}

\author[Villarreal]{Rafael H. Villarreal}
\address{
Departamento de
Matem\'aticas\\
Centro de Investigaci\'on y de Estudios
Avanzados del
IPN\\
Apartado Postal
14--740 \\
07000 Mexico City, D.F.
}
\email{vila@math.cinvestav.mx}

\keywords{Cohen--Macaulay, weighted digraph, edge ideal.}
\subjclass[2010]{Primary 13F20; Secondary 05C22, 05E40, 13H10.}

\begin{abstract}
Let $\D$ be a weighted oriented graph and let $I(\D)$ be its edge ideal. Under a natural condition that the underlying (undirected) graph of $\D$ contains a perfect matching consisting of leaves, we provide several equivalent conditions for the Cohen-Macaulayness of $I(\D)$. We also completely characterize the Cohen-Macaulayness of $I(\D)$ when the underlying graph of $\D$ is a bipartite graph. When $I(\D)$ fails to be Cohen-Macaulay, we give an instance where $I(\D)$ is shown to be sequentially Cohen-Macaulay.
\end{abstract}

\maketitle


\section{Introduction}\label{intro-section}

An \emph{oriented graph} $\D = (V(\D), E(\D))$ consists of an \emph{underlying} simple graph $G$ on which each edge is given an orientation (i.e., a directed graph without multiple edges nor loops). The elements of $E(\D)$ will be denoted by ordered pairs to reflect the orientation, where $(x,y)$ represents an edge directed from $x$ to $y$. An oriented graph $\D$ is called \emph{vertex-weighted} (or simply, \emph{weighted}) if it is equipped with a \emph{weight function} $\w: V(\D) \longrightarrow \NN$.

Let $\D$ be a weighted oriented graph over the vertex set $V(\D) =
\{x_1, \dots, x_n\}$ and let $R = K[x_1, \dots, x_n]$ be a polynomial
ring over a field $K$. For simplicity, let $\w_j = \w(x_j)$ for $j =
1, \dots, n$. The
\emph{edge ideal} of $\D$ is defined to be
$$I(\D) = \big(x_ix_j^{\w_j} ~\big|~ (x_i,y_j) \in E(\D)\big).$$
The Cohen--Macaulay property and the unmixed property of $I(\D)$ are
independent of the weight we assign to a \emph{source} vertex (i.e., a vertex
with only outgoing edges) or a \emph{sink}  (i.e.,
a vertex with only incoming edges). For this reason we shall always
assume---when studying these two properties ---that sources and sinks always have weight $1$.


The interest in edge ideals of weighted oriented graphs comes from
coding theory, in the study of Reed-Muller typed codes, as we shall
now briefly explain. Let $K = \FF_q$ be a finite field and let $G_1
\subset \dots \subset G_n$ be a nested sequence of multiplicative
subgroups of $\FF_q^* = \FF_q \setminus \{0\}.$ Consider the set of
projective points
$$\X = [(G_1 \cup \{0\}) \times \dots \times (G_n \cup \{0\})].$$
The vanishing ideal $I(\X)$ of $\X$ is generated by the set $\B$ of all binomials $x_ix_j^{w_j} - x_i^{w_i}x_j$, for $1 \le i < j \le n$, where $w_i = |G_i|+1$. Moreover $\B$ forms a Gr\"obner basis for $I(\X)$ with respect to the lexicographic order (cf. \cite{carvalho-lopez-lopez}). Let $\D$ be the oriented graph on $n$ vertices $\{x_1, \dots, x_n\}$, where $(x_i, x_j) \in E(\D)$ if and only if $i < j$, and set $\w(x_1) = 1$ and $\w(x_i) = w_i$ for $i > 1$. Then, $I(\D)$ is the initial ideal of $I(\X)$. In particular, by examining $I(\D)$, one can compute and estimate ``basic'' parameters of the Reed-Muller typed code associated to $\X$ (see \cite{carvalho-lopez-lopez,hilbert-min-dis}). For example, an interesting open problem is to compute the minimum distance of this type of linear code.

If the weight function of $\D$ is the trivial one, i.e., $\w(x) = 1$ for all $x \in V(\D)$, then $I(\D)$ recovers the usual edge ideal of its (undirected) underlying graph. Edge ideals of (undirected) graphs have been investigated extensively in the literature. In general, edge ideals of weighted oriented graphs are different from edge ideals of edge-weighted (undirected) graphs defined by Paulsen and Sather-Wagstaff \cite{PS}.

Algebraic invariants and properties of edge ideals of weighted
oriented graphs have been studied in \cite{VV, PRT}. A weighted oriented graph $\D$ is called \emph{Cohen-Macaulay} (respectively, \emph{sequentially Cohen-Macaulay}, \emph{unmixed}) if the quotient ring $R/I(\D)$ is a Cohen-Macaulay (respectively, sequentially Cohen-Macaulay, unmixed) ring. In this paper, we investigate the Cohen-Macaulayness of the edge ideal $I(\D)$ of a weighted oriented graph. Our results generalize a recent work of Gimenez, Mart\'\i nez, Simis, Villarreal and Vivares \cite{VV} from forests to arbitrary graphs, and extend a recent study of Pitones, Reyes and Toledo \cite{PRT} on the unmixedness of $I(\D)$, when the underlying graph of $\D$ is a bipartite graph, to give a complete characterization of the Cohen-Macaulayness of edge ideals for weighted oriented bipartite graphs.

Our method also leads to an affirmative answer to what was initially stated as an open question in \cite{VV} (see now \cite[Remark 3.2]{VV}). In particular, we prove the following theorem.

\begin{theorem}[Theorems \ref{cm-weighted-orientes-graphs} and \ref{linear}] \label{thm.intro}
Let $\D$ be a weighted oriented graph and let $G$ be its underlying graph. Suppose that $G$ has a perfect matching $\{x_1, y_1\}, \dots, \{x_r, y_r\}$, where $y_i$'s are leaf vertices in $G$. Then the following are equivalent:
\begin{enumerate}
\item[(a)] $\D$ is a Cohen-Macaulay weighted oriented graph;
\item[(b)] $I(\D)$ is unmixed; that is, all its associated primes have the same height;
\item[(c)] $\w(x_s) = 1$ for any edge $(x_s,y_s)$ in $\D$.
\end{enumerate}
Moreover, any of the above conditions implies
\begin{enumerate}
\item[(d)] The polarization $I(\D)^{\pol}$ has dual linear quotients;
\end{enumerate}
\end{theorem}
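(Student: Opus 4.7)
The plan is to establish the cycle of implications (a) $\Rightarrow$ (b) $\Rightarrow$ (c) $\Rightarrow$ (d) $\Rightarrow$ (a). Two of these are short. The implication (a) $\Rightarrow$ (b) is the standard fact that Cohen-Macaulay rings are unmixed. For (d) $\Rightarrow$ (a), dual linear quotients on $I(\D)^{\pol}$ force the Alexander dual of $I(\D)^{\pol}$ to have a linear resolution, so by the Eagon-Reiner theorem $R/I(\D)^{\pol}$ is Cohen-Macaulay; since polarization preserves Cohen-Macaulayness, so is $R/I(\D)$.

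For (b) $\Rightarrow$ (c) I would argue by contrapositive. Since $\{x_i,y_i\}_{i=1}^{r}$ is a perfect matching of $G$, the set $\{x_1,\ldots,x_r\}$ is a minimum vertex cover and $\height I(\D)=r$, so every minimal prime of $I(\D)$ has height $r$. Now suppose some edge $(x_s,y_s)\in E(\D)$ has $\w(x_s)\geq 2$. Using the description of the associated primes of $I(\D)$ via strong vertex covers (in the sense of Pitones-Reyes-Toledo), I would show that $C=\{x_1,\ldots,x_r,y_s\}$ is a strong vertex cover of $\D$; intuitively, the leaf edge $(x_s,y_s)$ together with $\w(x_s)\geq 2$ forces $x_s$ to remain in any strong cover covering that edge even after $y_s$ is included. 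This produces an associated prime of height $r+1$, contradicting unmixedness.

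For the implication (c) $\Rightarrow$ (d) I would analyse $I(\D)^{\pol}$ directly. Each generator $x_ix_j^{\w_j}$ of $I(\D)$ polarizes to a squarefree monomial $x_i\,x_{j,1}\cdots x_{j,\w_j}$, and condition (c) guarantees that whenever $(x_s,y_s)\in E(\D)$ with $y_s$ a leaf, the variable $x_s$ occurs squarefree in every polarized generator in which it appears. The Alexander dual $(I(\D)^{\pol})^{\vee}$ is generated by the monomials associated to the minimal covers of the polarized clutter. I would build an ordering of these generators by induction on $r$, peeling off a matched pair $\{x_i,y_i\}$ at a time and extending an ordering coming from the reduced graph. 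At each extension step, the colon ideal of the newly inserted generator against the previous ones should be shown to be generated by variables; the key combinatorial input is exactly that $y_i$ is a leaf and that condition (c) prevents the polarization from introducing a nonsquarefree obstruction at $x_i$, so the only relations produced come from switching between $x_i$ and $y_i$ in the cover.

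The main obstacle is (c) $\Rightarrow$ (d): choosing a total order on the minimal covers of the polarized clutter and verifying the linear-quotient condition inductively, in particular giving an explicit variable generating each colon ideal. A secondary delicate point in (b) $\Rightarrow$ (c) is dealing with leaf edges oriented away from the leaf, i.e.\ of the form $(y_i,x_i)$, on which (c) imposes no weight restriction; one must confirm that the strong vertex cover constructed for a bad edge $(x_s,y_s)$ is genuinely strong in the presence of such outward leaf edges.
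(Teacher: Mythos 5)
Your proposed cycle (a) $\Rightarrow$ (b) $\Rightarrow$ (c) $\Rightarrow$ (d) $\Rightarrow$ (a) has a fatal step: the implication (d) $\Rightarrow$ (a) is \emph{false} in general, and indeed the paper does not assert it. Linear quotients on $(I(\D)^{\pol})^\vee$ only give that this Alexander dual is \emph{componentwise linear}, which by Eagon--Reiner yields that $R^{\pol}/I(\D)^{\pol}$ is \emph{sequentially} Cohen--Macaulay, not Cohen--Macaulay. You would get a genuine linear resolution (and hence Cohen--Macaulayness) only if the generators of $(I(\D)^{\pol})^\vee$ all had the same degree, i.e.\ if the dual were pure; the paper is explicit that the equivalence with (d) would only hold under this extra purity hypothesis. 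Accordingly the paper proves (c) $\Rightarrow$ (a) directly by a polarization trick — it constructs an Artinian ideal $J$ on a fresh set of variables whose polarization is, after a change of variables, literally $I(\D)^{\pol}$; Artinian rings are trivially Cohen--Macaulay and polarization preserves Cohen--Macaulayness both ways. The implication (c) $\Rightarrow$ (d) is then a separate, one-directional statement proved afterwards. If you want to keep a cyclic structure you should aim for (a) $\Rightarrow$ (b) $\Rightarrow$ (c) $\Rightarrow$ (a), and add (c) $\Rightarrow$ (d) as a coda; an argument through (d) cannot close the loop.

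Your (b) $\Rightarrow$ (c) step also has a gap. Taking $C=\{x_1,\ldots,x_r,y_s\}$, one checks that both $y_s$ and $x_s$ land in $L_3(C)$: every $G$-neighbor of $x_s$ is either some $x_j$ or $y_s$ (the $y_i$'s are leaves), and all of those are in $C$. The strong-cover condition at $y_s$ is fine (it is witnessed by $(x_s,y_s)$ with $\w(x_s)\ge 2$), but the condition at $x_s$ requires an \emph{in}-edge $(y,x_s)$ with $y\in L_2(C)\cup L_3(C)$ and $\w(y)\ge 2$, and nothing in the hypotheses guarantees such a vertex. The paper sidesteps this: since $\w(x_s)\ge 2$, $x_s$ is not a source, and because $y_s$ is a leaf the incoming edge must be $(x_l,x_s)$ for some $l$. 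Then $C=(\{x_1,\ldots,x_r\}\setminus\{x_l\})\cup\{y_l,y_s\}$ removes $x_l$, so $x_s$ is no longer in $L_3(C)$ (it has the uncovered neighbor $x_l$) and lies in $L_2(C)$; one computes $L_3(C)=\{y_s\}$, witnessed by $(x_s,y_s)$. This gives a strong vertex cover of size $r+1$, contradicting unmixedness. Your secondary worry about leaf edges of the form $(y_i,x_i)$ is a red herring — the actual obstruction is that $x_s$ itself may lie in $L_3$ of your cover.

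Finally, your sketch of (c) $\Rightarrow$ (d) is only a plan, not a proof. The paper's argument is not naive induction on $r$: it fixes a global lexicographic order on the polarized variables, shows every minimal cover of $H(I(\D)^{\pol})$ has the uniform shape $a_1\cdots a_r$ with $a_i\in\{x_{i,j},y_{i,k}\}$ (Lemma on minimal vertex covers of the polarized hypergraph), and then uses two ``move-down'' lemmas — replacing $x_{i,j}$ by $x_{i,j-1}$, or $y_{i,k}$ by $x_{i,j}$, produces another minimal cover — to exhibit, for any $M_u < M_t$ in the order, an earlier generator differing from $M_t$ in exactly one variable that supplies a variable generator of the colon ideal. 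Those replacement lemmas are the crux and depend on the unmixedness already established; they are missing from your sketch.
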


Our assumption in Theorem \ref{thm.intro} that $G$ has such a perfect matching comes naturally. It was shown in \cite[Theorem 3.1]{VV} that if $G$ is a forest and $\D$ is Cohen-Macaulay then $G$ must have a perfect matching $\{x_1,y_1\}, \dots, \{x_r,y_r\}$, where $y_i$'s are leaf vertices in $G$. An important application of Theorem \ref{thm.intro} is when $G$ is obtained by adding a \emph{whisker} to every vertex of a given graph. This application is inspired by a well celebrated result of Villarreal \cite{V}, where it was shown that the edge ideal of the graph obtained by adding a whisker at every vertex of a simple (undirected) graph is always Cohen-Macaulay.

The proof of Theorem \ref{thm.intro} will be broken into two parts. In Theorem \ref{cm-weighted-orientes-graphs} we establish the equivalence (a) $\Leftrightarrow$ (b) $\Leftrightarrow$ (c), and in Theorem \ref{linear} we prove (c) $\Rightarrow$ (d). It is easy to see that (a) $\Rightarrow$ (b) $\Rightarrow$ (c). Thus, it remains to show that (c) $\Rightarrow$ (a) to get Theorem \ref{cm-weighted-orientes-graphs}. To prove (c) $\Rightarrow$ (a), we apply polarization and construct an Artinian ideal whose polarization is the same as that of $I(\D)$. The proof of (c) $\Rightarrow$ (d) is more involved. We give an explicit order of the generators of the Alexander dual of $I(\D)^{\pol}$, and show that with this order the dual of $I(\D)^{\pol}$ has linear quotients. In fact, if in (d) it is known that the polarization of $I(\D)$ has pure dual linear quotients, then all four conditions would be equivalent.

If $\D$ fails the condition of Theorem \ref{thm.intro}.(c) but only at one edge $(x_s,y_s) \in E(\D)$ then $\D$ is no longer Cohen-Macaulay. We shall show that, in this case, $\D$ is sequentially Cohen-Macaulay. Specifically, we prove the following theorem.

\begin{theorem}[Theorem \ref{Clinear}] \label{thm.intro2}
Let $\D$ be a weighted oriented graph, and assume that its underlying graph $G$ has a perfect matching $\{x_1,y_1\}, \dots, \{x_r,y_r\}$, where $y_i$'s are leaf vertices in $G$. Suppose also that $(x_1,y_1) \in E(\D)$ with $\w(x_1) > 1$, while $\w(x_s) = 1$ for all $s \ge 2$ such that $(x_s,y_s) \in E(\D)$. Then $I(\D)^{\pol}$ has dual linear quotients. In particular, $\D$ is sequentially Cohen-Macaulay.
\end{theorem}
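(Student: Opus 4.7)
The plan is to extend the explicit linear-quotient order on $(I(\D')^{\pol})^{\vee}$ produced by Theorem \ref{linear}, where $\D'$ denotes the weighted oriented graph obtained from $\D$ by resetting $\w(x_1)$ to $1$, to an order on $(I(\D)^{\pol})^{\vee}$ that still realizes linear quotients. Since $y_1$ is a leaf with $(x_1,y_1)\in E(\D)$, the vertex $y_1$ is a sink, so $\w(y_1)=1$ and the generator of $I(\D)$ coming from $(x_1,y_1)$ is the squarefree monomial $x_1y_1$, unchanged under polarization. The only thickening at $x_1$ comes from edges $(z,x_1)\in E(\D)$, whose generator $zx_1^{\w(x_1)}$ polarizes to $z\cdot x_{1,1}x_{1,2}\cdots x_{1,\w(x_1)}$, with $x_{1,1}=x_1$ and $x_{1,2},\dots,x_{1,\w(x_1)}$ fresh; every other polarized generator of $I(\D)^{\pol}$ coincides with one of $I(\D')^{\pol}$. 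Let $\prec$ denote the linear-quotient order on $(I(\D')^{\pol})^{\vee}$ guaranteed by Theorem \ref{linear}.

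A direct analysis shows that the minimal vertex covers of the hypergraph of $I(\D)^{\pol}$ split into three families: (I) those containing $x_{1,1}$ but neither $y_1$ nor any $x_{1,j}$ with $j\ge 2$; (II) those containing $y_1$ and exactly one $x_{1,j}$ with $j\ge 2$ but not $x_{1,1}$; and (III) those containing $y_1$ and every $z$ with $(z,x_1)\in E(\D)$ but no $x_{1,j}$. Families (I) and (III) correspond bijectively (via $x_{1,1}\leftrightarrow x_1$) to the minimal covers of $I(\D')^{\pol}$, while each (II) cover arises from a (I) cover $C$ with $\{z\mid(z,x_1)\in E(\D)\}\not\subseteq C$ by picking $j\in\{2,\dots,\w(x_1)\}$ and replacing $x_{1,1}$ with $x_{1,j}$ and adjoining $y_1$. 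Order the generators of $(I(\D)^{\pol})^{\vee}$ by listing the (I) and (III) generators according to $\prec$ first, then appending the (II) generators by the $\prec$-rank of their underlying (I) cover with ties broken by increasing $j$.

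The linear-quotient verification proceeds case by case. Consecutive (II) generators $f_{C^{(j')}}, f_{C^{(j)}}$ with a common underlying cover $C$ and $j'<j$ produce the single-variable colon $(x_{1,j'})$, and $f_{C^{(j)}}$ has its (I) parent $f_C$ as a predecessor, contributing the variable $x_{1,1}$. When a (I) generator $f_{C'}$ with $C'\prec C$ is compared to $f_{C^{(j)}}$, the gcd loses $x_{1,1}$ relative to its gcd with $f_C$, so the colon contribution gains a factor $x_{1,1}$ already supplied by $f_C$. For an earlier (III) generator $f_{C'}$, the gcd gains $y_1$ and the new colon is the old one divided by $y_1$, so provided the witness variable from Theorem \ref{linear} for $C'\prec C$ is not $y_1$ itself, it continues to work. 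Finally, a (II) predecessor $f_{C''^{(j)}}$ with $C''\prec C$ and the same index $j$ yields a colon equal to the $\D'$-witness variable relating $C''$ and $C$, transferring the linear-quotient witness of Theorem \ref{linear} into the (II) layer.

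Once dual linear quotients of $I(\D)^{\pol}$ are established, $(I(\D)^{\pol})^{\vee}$ is componentwise linear; by the Herzog--Hibi correspondence between componentwise linear squarefree monomial ideals and sequentially Cohen--Macaulay Stanley--Reisner quotients, $R^{\pol}/I(\D)^{\pol}$ is sequentially Cohen--Macaulay. Since polarization preserves sequential Cohen--Macaulayness, $R/I(\D)$ is sequentially Cohen--Macaulay. The principal obstacle is handling two edge cases in the witness bookkeeping: the Theorem \ref{linear}-witness against a (III) predecessor may happen to equal $y_1$, and a needed (II) predecessor $f_{C''^{(j)}}$ may fail to exist when the $\D'$-witness $C''$ is type (III) rather than (I). In both situations one must construct a replacement witness, either by promoting a different index $j$ or by retreating to a type (I) ancestor in the $\prec$-order; the argument closes precisely because only the single vertex $x_1$ is thickened, keeping these combinatorial patches local.
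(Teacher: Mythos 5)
Your overall plan is close in spirit to the paper's: describe the minimal covers of $H(I(\D)^{\pol})$ as a three-way family controlled by what happens at $x_1,y_1$, then lift a linear-quotients order on the Alexander dual of an auxiliary Cohen--Macaulay graph $\D'$. There are two deviations from the paper's construction. First, your $\D'$ keeps both $x_1,y_1$ and resets $\w(x_1)$ to $1$, whereas the paper's $\D'$ is the induced subgraph on $\{x_2,\ldots,x_r,y_2,\ldots,y_r\}$. Second, your order on the degree-$(r+1)$ generators (family (II)) compares the underlying degree-$r$ cover first and $j$ second, whereas the paper simply keeps the same lexicographic order of Theorem~\ref{linear} across all degrees, which compares $x_{1,j}$ (i.e.\ $j$) first. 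Neither deviation is fatal, but the second one is what produces the extra bookkeeping you struggle with.

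The real problem is that the proof is not finished and the sketch of how to finish it is incorrect. You explicitly acknowledge two ``edge cases'' and end with ``one must construct a replacement witness \dots the argument closes precisely because only the single vertex $x_1$ is thickened,'' which is a statement of intent, not a proof. Worse, the two edge cases as you describe them are not the right ones. (i) When you invoke ``the witness variable from Theorem~\ref{linear} for $C'\prec C$'' with $C'$ of type (III) and $C$ the underlying type-(I) cover of a type-(II) cover, the ordering is backwards: in the lex order of Theorem~\ref{linear} applied to your $\D'$, every cover containing $x_{1,1}$ precedes every cover containing $y_{1,1}$, so $C\prec C'$, not $C'\prec C$, and the resulting witness reduces $f_{C'}$, not $f_{C^{(j)}}$. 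The colon $f_{C'}:f_{C^{(j)}}$ has to be compared instead at the level of the $\{x_2,\ldots,y_r\}$-parts, where the relevant observation (made explicit in the paper's proof, in the sentence ``$M''$ must contain $x_{l,1}$ \dots This implies \dots $M''>M'$'') is that the type-(III) part dominates the type-(II) part because it contains every $x_{l,1}$ with $(x_l,x_1)\in E(\D)$ while the type-(II) part omits at least one. (ii) Relatedly, your worry that the witness might equal $y_1$ is vacuous: $y_{1,1}$ lies in both the type-(III) cover and the type-(II) cover, hence never appears in the colon. (iii) Your proposed fix, ``retreating to a type-(I) ancestor in the $\prec$-order,'' does not address the case where the swapped cover $C''=(C\setminus\{u\})\cup\{v\}$ happens to contain every $x_{s,1}$ with $(x_s,x_1)\in E(\D)$, so that $C''\cup\{x_{1,j},y_{1,1}\}$ is not a minimal cover; the correct replacement there is the \emph{type-(III)} cover $(C''\setminus\{x_{1,1}\})\cup\{y_{1,1}\}$, which still has colon equal to $v$ and precedes every type-(II) generator. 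These corrections can be made, and then your ordering does give linear quotients, but as written the verification is muddled and incomplete; the paper avoids most of this by taking $\D'$ to be the induced subgraph and sticking with the global lex order of Theorem~\ref{linear}, which makes the degree-$(r+1)$ case reduce cleanly to the linear quotients of $I(\D')^{\pol}$.
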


The proof of Theorem \ref{thm.intro2} follows a similar line of argument as that of Theorem \ref{linear}. The minimal generators of the Alexander dual of $I(\D)^{\pol}$, in this case, are not necessarily of the same degrees. However, we show that these generators can be obtained by multiplying appropriate variables with a minimal generator of the Alexander dual of $I(\D')^{\pol}$, where $\D'$ is the induced oriented subgraph of $\D$ on $V(\D) \setminus \{x_1,y_1\}$. Since $\D'$ satisfies condition (d) of  Theorem \ref{thm.intro}, by adapting the proof of Theorem \ref{linear} for $\D'$, we exhibit an ordering of these generators that gives linear quotients.

Our next main result gives a complete classification of Cohen-Macaulay weighted oriented graphs whose underlying graphs are bipartite. The unmixedness of $I(\D)$ in this case has been characterized in \cite[Theorem 4.17]{PRT}. Our result provides supportive evidence for \cite[Conjecture 5.5]{PRT}, which states that for any weighted oriented graph $\D$ with underlying graph $G$, the following are equivalent:
\begin{enumerate}
\item $I(\D)$ is Cohen-Macaulay; and
\item $I(\D)$ is unmixed and $I(G)$ is Cohen-Macaulay.
\end{enumerate}
We verify this conjecture for weighted oriented bipartite graphs (see Corollary \ref{cor.PRTconj}) as a corollary to the following theorem.

\begin{theorem}[Theorem \ref{cm-weighted-oriented-bipartite-graph}] \label{thm.intro1}
Let $\mathcal{D}$ be a weighted oriented bipartite graph without isolated
vertices, and let $G$ be its underlying graph.  Then
$\mathcal{D}$ is Cohen--Macaulay if and only if $G$ has a perfect matching $\{x_1,y_1\},\ldots,\{x_r,y_r\}$ such that  
the following conditions hold:
\begin{enumerate}
\item[(a)] $e_i=\{x_i,y_i\}\in E(G)$ for all $i$;
\item[(b)] if $\{x_i,y_j\}\in E(G)$, then $i\leq j$;
\item[(c)] if $\{x_i,y_j\}$, $\{x_j,y_k\}$ are in $E(G)$ and $i<j<k$,
then $\{x_i,y_k\}\in E(G)$;
\item[(d)] If $\w(y_j)\geq 2$ and
$N_\D^+(y_j)=\{x_{i_1},\ldots,x_{i_s}\}$, then $N_G(y_{i_\ell})\subset
N_\D^+(y_j)$ and all vertices of $N_\D^-(y_{i_\ell})$ have weight $1$ for $1\leq \ell\leq s$; and
\item[(e)] If $\w(x_j)\geq 2$ and
$N_\D^+(x_j)=\{y_{i_1},\ldots,y_{i_s}\}$, then $N_G(x_{i_\ell})\subset
N_\D^+(x_j)$ and all vertices of $N_\D^-(x_{i_\ell})$ have weight $1$ for $1\leq \ell\leq s$.
\end{enumerate}
\end{theorem}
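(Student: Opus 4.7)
The proof naturally splits into necessity and sufficiency. For necessity, Cohen-Macaulayness of $\mathcal{D}$ implies that $I(\mathcal{D})$ is unmixed, and the Pitones-Reyes-Toledo classification \cite[Theorem 4.17]{PRT} of unmixed weighted oriented bipartite graphs without isolated vertices gives precisely a perfect matching satisfying conditions (a)-(e). This direction is essentially immediate from known results.

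The sufficiency direction is where the real work lies. Assuming (a)-(e), the plan is to prove Cohen-Macaulayness by induction on $r$. Conditions (b) and (c) equip $[r]$ with a natural partial order via $i\le j \iff \{x_i,y_j\}\in E(G)$. For the inductive step, choose $k$ maximal in this order; condition (b) then forces $x_k$ to be adjacent only to $y_k$ in $G$, so $x_k$ is a leaf of $G$. The induced weighted oriented subgraph $\mathcal{D}'$ on $V(\mathcal{D})\setminus\{x_k,y_k\}$ inherits a perfect matching satisfying (a)-(e), and therefore is Cohen-Macaulay by the induction hypothesis.

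To lift Cohen-Macaulayness from $\mathcal{D}'$ to $\mathcal{D}$, the plan is to use the short exact sequence
\begin{equation*}
0 \to R/(I(\mathcal{D}):f) \xrightarrow{\cdot f} R/I(\mathcal{D}) \to R/(I(\mathcal{D})+(f)) \to 0
\end{equation*}
with $f$ chosen according to the orientation and weight of the edge between $x_k$ and $y_k$. One expects both outer terms to be Cohen-Macaulay of the correct dimension: one of them reduces, up to an Artinian factor coming from the leaf $x_k$, to $R/I(\mathcal{D}')$, while the other involves ideals of smaller weighted oriented graphs that fall under the induction hypothesis or under Theorem \ref{thm.intro}. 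The depth lemma then yields the Cohen-Macaulayness of $R/I(\mathcal{D})$.

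The main obstacle is verifying that, in each case split by the orientation and weights around $x_k$ and $y_k$, both the colon and sum ideals admit the required structure. Conditions (d) and (e) are precisely what is needed here: when some vertex adjacent to $x_k$ or $y_k$ carries weight $\ge 2$, (d) or (e) forces its out-neighbors to have weight-$1$ in-neighbors and to have $G$-neighborhoods comparable to those of $y_k$ or $x_k$. This controls the colon ideal $(I(\mathcal{D}):f)$ and ensures that it again defines a Cohen-Macaulay quotient. A parallel approach, in the spirit of the proof of Theorem \ref{thm.intro}, would be via polarization: construct an Artinian ideal whose polarization matches $I(\mathcal{D})^{\pol}$ and argue Cohen-Macaulayness there. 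In either route, the delicate interplay between weights and digraph neighborhoods governed by conditions (d) and (e) is the crux of the argument.
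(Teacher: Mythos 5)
Your necessity argument has a small but real inaccuracy: unmixedness alone does not give the full package (a)--(e). By the PRT theorem, unmixedness of $I(\D)$ for a bipartite oriented graph is equivalent to conditions (a), (d), (e); conditions (b) and (c) come instead from the Cohen--Macaulayness of the \emph{underlying} graph $G$ via the Herzog--Hibi classification of Cohen--Macaulay bipartite graphs (which the paper invokes after noting that $I(G)=\sqrt{I(\D)}$ and quoting Herzog--Takayama--Terai). Your phrasing ``the Pitones--Reyes--Toledo classification... gives precisely a perfect matching satisfying conditions (a)--(e)'' overstates what unmixedness provides and silently omits the step where $I(G)$ must be shown Cohen--Macaulay.

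For sufficiency, your strategy matches the paper's in broad outline (induction on $r$, short exact sequence, condition (d)/(e) controlling the colon ideal), but you have left the hard part as a plan. A few substantive points of divergence from what actually works. First, the paper does not split by ``orientation and weight of the edge between $x_k$ and $y_k$'' to choose $f$; it fixes $f=y_r$ throughout and splits only by whether $\w(y_r)$ is $1$ or $2$. Second, and more importantly, one application of the short exact sequence is not enough: the sum ideal $(I(\D),y_r)$ does reduce to $\D\setminus\{x_r,y_r\}$ as you hope, but the colon ideal $(I(\D):y_r)$ does \emph{not} directly correspond to a smaller weighted oriented graph, so the paper applies the sequence a second time to $(I(\D):y_r)$ and must handle both $((I(\D):y_r),y_r)$ and $((I(\D):y_r):y_r)$. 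Third, the key technical step is realizing these iterated ideals, after a \emph{partial} polarization of certain squared variables, as edge ideals of auxiliary oriented bipartite graphs $\F$ and $\F_1$, and then verifying -- nontrivially, using (d) and (e) to show certain $y_i$'s become isolated -- that these auxiliary graphs again satisfy (a)--(e) so that induction applies. None of this is routine; it is the bulk of the paper's argument, and your proposal gestures toward it (``the delicate interplay... is the crux'') without carrying it out. So: right skeleton, correct intuition about the role of (d) and (e), but the proof as written has a genuine gap precisely at the step you flag as the crux.
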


The proof of Theorem \ref{thm.intro1} is an involved analysis. We first observe that $I(G) = \sqrt{I(\D)}$. Thus, if $I(\D)$ is Cohen-Macaulay then so is $I(G)$ (due to \cite[Theorem~2.6]{herzog-takayama-terai}).
It then follows from \cite[Theorem~3.4]{herzog-hibi-crit} that $G$ has a perfect matching $\{x_1,y_1\},\ldots,\{x_r,y_r\}$ such that conditions (a)-(c) hold. Since $I(\D)$ is Cohen-Macaulay, $I(\D)$ is also unmixed. Conditions (d)-(e) follow from \cite[Theorem~4.17(2)]{PRT}. 

To prove the converse statement, we make use of the following standard short exact sequence
\begin{align}
0 \rightarrow R/(I(\D):y_r)[-1] \stackrel{y_r}{\rightarrow} R/I(\D) \rightarrow R/(I(\D),y_r) \rightarrow 0 \label{eq.standardses}
\end{align}
to convert the problem to showing that both $R/(I(\D):y_r)$ and $R/(I(\D),y_r)$ are Cohen-Macaulay of appropriate dimensions. It is easy to see that $(I(\D), y_r)$ comes from the induced subgraph $\D \setminus \{x,y\}$ and, thus, is Cohen-Macaulay by induction. It remains to consider $(I(\D):y_r)$. We apply the short exact sequence (\ref{eq.standardses}) to $(I(\D):y_r)$ itself to reduce the problem to examining the ideals $((I(\D):y_r),y_r)$ and $((I(\D):y_r):y_r)$. To this end, we realize these ideals as edge ideals (together with isolated variables) of subgraphs of $\D$, and show that these subgraphs and their underlying graphs also satisfy conditions (a)-(e), and the conclusion follows by induction.

The paper is outlined as follows. In the next section, we collect notation and terminology. In Section \ref{whisker-digraphs-section}, we prove our main results characterizing the Cohen-Macaulayness of a large class of oriented graphs. In Section \ref{NCM-section}, we consider oriented graphs which fail condition (d) of Theorem \ref{thm.intro} at only one edge, and show that the polarizations of their edge ideals have dual linear quotients. In Section \ref{bipartite-section}, we prove our last main result characterizing Cohen-Macaulay edge ideals of weighted oriented bipartite graphs.

\begin{acknowledgement} This work started while the authors were at the BIRS-CMO workshop on ``Ordinary and Symbolic Powers of Ideals''. The authors would like to thank Banff International Research Station and Casa Mathem\'atica Oaxaca for their support and hospitality. The first author is partially supported by Simons Foundation and Louisiana Board of Regents.
\end{acknowledgement}


\section{Preliminaries} \label{prel-section}

In this section, we collect notation and terminology that will be used in the paper. We shall follow standard texts in the research area \cite{digraphs, Har, Mat, monalg-rev}.

Recall that $R = K[x_1, \dots, x_n]$ denotes a polynomial ring over a field $K$. For a tuple $\bsa = (a_1, \dots, a_n) \in \ZZ^n_{\ge 0}$, we shall write $\bsx^\bsa$ for the monomial $x_1^{a_1} \cdots x_n^{a_n}$ in $R$. Polarization is an essential technique in our work, so we will recall this notion following \cite{Peeva}.

\begin{definition}[\protect{\cite[Construction 21.7]{Peeva}}] \label{polarization} \quad
\begin{enumerate}
\item Let $\bsx^\bsa = x_1^{a_1} \cdots x_n^{a_n}$ be a monomial in $R$. The \emph{polarization} of $\bsx^\bsa$ is defined to be $\big(\bsx^\bsa\big)^{\pol} = (x_1^{a_1})^{\pol} \cdots (x_n^{a_n})^{\pol}$, where the operator $(\bullet)^{\pol}$ replaces $x_i^{a_i}$ by a product of distinct variables $\prod_{j = 1}^{a_i} x_{i,j}$.
\item Let $I = (\bsx^{\bsa_1}, \dots, \bsx^{\bsa_r}) \subseteq R$ be a monomial ideal. The \emph{polarization} of $I$ is defined to be the ideal $I^{\pol} = \big((\bsx^{\bsa_1})^{\pol}, \dots, (\bsx^{\bsa_r})^{\pol}\big)$ in a new polynomial ring $R^{\pol} = K[x_{i,j} ~|~ 1\le i \le n, 1 \le j \le p_i]$, where $p_i$ is the maximum power of $x_i$ appearing in $\bsx^{\bsa_1}, \dots, \bsx^{\bsa_r}$.
\end{enumerate}
\end{definition}

Our use of polarization is reflected in the following well-known result, which relates the (sequentially) Cohen-Macaulayness of an ideal with its polarization.

\begin{theorem}[\protect{\cite{Froberg} and \cite{Faridi}}] \label{PCM}
Let $I = (\bsx^{\bsa_1}, \dots, \bsx^{\bsa_r}) \subseteq R$ be a monomial ideal and, for $1 \le i \le n$, let $p_i$ be the maximum power of $x_i$ appearing in $\bsx^{\bsa_1}, \dots, \bsx^{\bsa_r}$.
\begin{enumerate}
\item Consider the sequence $\alpha = \{x_{i,j} - x_{i,1} ~|~ 1 \le i \le n, 2 \le j \le p_i\}$. Then $\alpha$ is a regular sequence in $R^{\pol}/I^{\pol}$ and
$$R^{\pol}/(I^{\pol}+(\alpha)) \simeq R/I.$$
\item $R/I$ is Cohen-Macaulay (respectively, sequentially Cohen-Macaulay) if and only if $R^{\pol}/I^{\pol}$ is Cohen-Macaulay (respectively, sequentially Cohen-Macaulay).
\end{enumerate}
\end{theorem}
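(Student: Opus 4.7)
The plan is to prove the two parts in sequence, with part (2) reduced to part (1) via standard facts about regular sequences of linear forms.

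For part (1), the isomorphism $R^{\pol}/(I^{\pol}+(\alpha)) \simeq R/I$ is essentially tautological. The map sending $x_{i,j}$ to a single variable (which we rename $x_i$) realizes the quotient by $\alpha$, and under this identification $(x_i^{a_i})^{\pol} = x_{i,1}x_{i,2}\cdots x_{i,a_i}$ collapses to $x_i^{a_i}$, so $I^{\pol}$ maps onto $I$ and the kernel is $(\alpha)$. The substance of part (1) is showing that $\alpha$ is a regular sequence on $R^{\pol}/I^{\pol}$. I would order the differences $x_{i,j}-x_{i,1}$ lexicographically and show inductively that each is a nonzerodivisor modulo $I^{\pol}$ together with the earlier differences. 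The key structural fact is that $I^{\pol}$ is squarefree, hence the Stanley--Reisner ideal of a simplicial complex $\Delta^{\pol}$; and by the construction of polarization, each generator of $I^{\pol}$ involves the variables $x_{i,1},\ldots,x_{i,p_i}$ in a consecutive block starting from $x_{i,1}$. This means the vertex $x_{i,j}$ (for $j\geq 2$) has the property that its star in $\Delta^{\pol}$ is contained in the star of $x_{i,1}$, from which one deduces that $x_{i,j}-x_{i,1}$ is a nonzerodivisor on $R^{\pol}/I^{\pol}$. Iterating yields the regular sequence claim.

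For part (2) in the Cohen--Macaulay case, since $\alpha$ is a regular sequence of linear forms in $R^{\pol}/I^{\pol}$, both depth and Krull dimension drop by $|\alpha|=\sum_i(p_i-1)$ upon quotienting. Therefore $R^{\pol}/I^{\pol}$ is Cohen--Macaulay if and only if $R^{\pol}/(I^{\pol}+(\alpha))\simeq R/I$ is Cohen--Macaulay.

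For the sequentially Cohen--Macaulay equivalence, I would use the standard characterization via the dimension filtration $0=M_0\subset M_1\subset\cdots\subset M_d=R^{\pol}/I^{\pol}$ where $M_i$ is the largest submodule of dimension $\leq i$. The goal is to show that a regular sequence of linear forms $\alpha$ preserves sequentially Cohen--Macaulayness. The key intermediate step is that if $M$ is graded and $y$ is a linear form that is a nonzerodivisor on $M$, then $y$ remains a nonzerodivisor on each $M_i$, hence also on each quotient $M_i/M_{i-1}$; one then checks that the dimension filtration of $M/yM$ is obtained from that of $M$ by quotienting each piece by $y$, with the dimensions decreasing uniformly by one. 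Applying this inductively to the elements of $\alpha$ gives the equivalence.

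The main obstacle I expect is the sequentially Cohen--Macaulay direction, since the dimension filtration does not always behave transparently under quotients. The technical point requiring care is verifying that a linear nonzerodivisor $y$ actually preserves the dimension filtration piece by piece (i.e., $(M/yM)_i = (M_{i+1}+yM)/yM$), which is needed to propagate sequential Cohen--Macaulayness. The Cohen--Macaulay analogue and the regular sequence verification in part (1) are comparatively routine.
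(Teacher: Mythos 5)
The paper does not prove Theorem \ref{PCM}; it is cited verbatim from Fr\"oberg and Faridi, so there is no ``paper's own proof'' to compare against. Evaluating your argument on its merits, I see two genuine gaps.

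First, in part (1), the claim that ``the star of $x_{i,j}$ in $\Delta^{\pol}$ is contained in the star of $x_{i,1}$'' is false as stated: the singleton $\{x_{i,j}\}$ is a face of $\Delta^{\pol}$ that does not contain $x_{i,1}$. The correct observation is about associated primes: since $I^{\pol}$ is squarefree, its associated primes are exactly its minimal primes, which correspond to minimal vertex covers $C$ of the hypergraph $H(I^{\pol})$. Because every minimal generator of $I^{\pol}$ divisible by $x_{i,j}$ (for $j\geq 2$) is also divisible by $x_{i,1}$, no minimal vertex cover can contain both $x_{i,1}$ and $x_{i,j}$ (otherwise $x_{i,j}$ could be discarded). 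Hence $x_{i,j}-x_{i,1}$ lies outside every associated prime and is a nonzerodivisor. This is a prime-avoidance argument, not a star-containment one; you should also be explicit that after quotienting by one such element, the resulting ideal is again (after renaming) a polarization with smaller polarizing degrees, so the same reasoning iterates.

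Second, and more seriously, the sequentially Cohen--Macaulay direction in part (2) is not carried to completion. You assert that for a linear nonzerodivisor $y$ on $M$, the dimension filtration of $M/yM$ is $\{(M_{i+1}+yM)/yM\}$. The part you correctly justify is that $y$ is a nonzerodivisor on each $M_i$ and on each $M_i/M_{i-1}$ (because $\Ass(M_i/M_{i-1})\subseteq\Ass(M)$). But the filtration compatibility $(M/yM)_i=(M_{i+1}+yM)/yM$ also requires, for instance, that $M_i\cap yM = yM_i$, which would follow from $y$ being a nonzerodivisor on $M/M_i$; that is a stronger condition that does not follow merely from $y$ being regular on $M$, and you do not establish it for the specific forms $x_{i,j}-x_{i,1}$. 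This is precisely the step you flag as ``requiring care,'' but flagging it does not close it. Fr\"oberg's and Faridi's proofs sidestep the dimension-filtration approach entirely (for instance, via graded Betti number comparisons or, for the sequential case, via Alexander duality and componentwise linearity); if you want to keep the filtration route, you need a concrete genericity or filter-regularity argument for the polarizing linear forms, which is missing.
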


Polarization allows us to reduce the study of monomial ideals to the study of squarefree monomial ideals. In doing so, we will be able to make use of combinatorial properties of squarefree monomial ideals and, in particular, to look at their associated hypergraphs.

Recall that a \emph{hypergraph} $H = (V(H), E(H))$ consists of a set $V(H)$ of distinct points, called the \emph{vertices}, and a set $E(H)$ of nonempty subsets of the vertices, called the \emph{edges}. We shall restrict ourselves to \emph{simple} hypergraphs; that is, hypergraphs with no nontrivial containments between its edges. A \emph{graph} is a hypergraph whose edges are all of cardinality 2.

Suppose that $H$ is a simple hypergraph over the vertex set $V(H) = \{x_1, \dots, x_n\}$. The \emph{edge ideal} of $H$ is defined to be
$$I(H) = \big( \prod_{x \in e} x ~|~ e \in E(H)\big) \subseteq R = K[x_1, \dots, x_n].$$
This construction gives a one-to-one correspondence between squarefree monomial ideals in $R$ and simple hypergraphs on $\{x_1,\dots, x_n\}$. Thus, we shall also denote the simple hypergraph corresponding to a squarefree monomial ideal $I \subseteq R$ by $H(I)$. We shall say that $H$ is \emph{Cohen-Macaulay} (respectively, \emph{sequentially Cohen-Macaulay}, \emph{unmixed}) if its edge ideal $I(H)$ is Cohen-Macaulay (respectively, sequentially Cohen-Macaulay, unmixed).

To investigate the (sequentially) Cohen-Macaulayness of a squarefree monomial ideal we shall examine its Alexander dual. Note that a monomial ideal $I \subseteq R$ has a unique set of minimal generators, which we shall denote by $\mathrm{gen}(I)$.

\begin{definition} Let $I \subseteq R$ be a squarefree monomial ideal. The \emph{Alexander dual} of $I$ is defined to be
$$I^\vee = \bigcap_{\bsx^\bsa \in \mathrm{gen}(I)} (x_i ~|~ x_i \text{ divides } \bsx^\bsa).$$
\end{definition}

It is an easy observation that the minimal generators of $I^\vee$ correspond to the minimal vertex covers of $H(I)$; here, for a hypergraph $H$, a subset $W$ of the vertices is called a \emph{vertex cover} if $W$ has nonempty intersection with every edge in $H$. Our use of the Alexander dual comes from the fact that the (sequentially) Cohen-Macaulayness of a squarefree monomial ideal $I$ is equivalent to the (componentwise) linearity of the minimal free resolution of $I^\vee$.

\begin{definition} Let $I \subseteq R$ be a monomial ideal. For $d \in \NN$, let $(I_d)$ denote the ideal generated by the degree $d$ elements of $I$. We say that $I$ is \emph{componentwise linear} if $(I_d)$ has a linear resolution for all $d \in \NN$.
\end{definition}

\begin{theorem}[\protect{\cite{ER} and \cite{Herzog-Hibi}}] \label{CMAD}
Let $I \subseteq R$ be a squarefree monomial ideal.
\begin{enumerate}
\item $R/I$ is Cohen-Macaulay if and only if $I^\vee$ has a linear free resolution.
\item $R/I$ is sequentially Cohen-Macaulay if and only if $I^\vee$ is componentwise linear.
\end{enumerate}
\end{theorem}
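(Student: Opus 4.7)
The plan is to pass to the Stanley--Reisner framework and combine Alexander duality with Terai's identity
\[
\reg(I^\vee) \;=\; \pd(R/I),
\]
together with the Auslander--Buchsbaum formula $\pd(R/I)+\depth(R/I)=n$. The first step is to identify $I=I_\Delta$ with the Stanley--Reisner ideal of a simplicial complex $\Delta$ on $\{x_1,\dots,x_n\}$ and verify that $I^\vee=I_{\Delta^\vee}$, where $\Delta^\vee=\{F\subseteq[n]:[n]\setminus F\notin\Delta\}$. The minimal generators of $I^\vee$ are precisely the squarefree monomials supported on the minimal vertex covers of $H(I)$; in particular, $I^\vee$ is generated in a single degree $d$ if and only if every minimal vertex cover of $H(I)$ has cardinality $d$, equivalently $I$ is unmixed with $\height(I)=d=n-\dim(R/I)$.

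For part (1), I would argue as follows. A linear resolution forces generation in a single degree, so $I^\vee$ has a linear resolution if and only if $I$ is unmixed and $\reg(I^\vee)=\height(I)$. By Terai this equals $\pd(R/I)$, and by Auslander--Buchsbaum this equals $n-\depth(R/I)$. Combined with $\height(I)=n-\dim(R/I)$ from the unmixed hypothesis, the condition collapses to $\depth(R/I)=\dim(R/I)$, i.e., $R/I$ is Cohen--Macaulay. The converse direction is immediate: Cohen--Macaulayness forces unmixedness, and the same chain of equalities runs in reverse.

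For part (2), the strategy is to apply the argument of (1) componentwise. For each integer $d$, the ideal $((I^\vee)_d)$ generated by the degree-$d$ component of $I^\vee$ is itself a squarefree monomial ideal, and its Alexander dual corresponds, under the Stanley--Reisner dictionary, to the pure $d$-dimensional skeleton of $\Delta$ (after the appropriate dimension shift). By Duval's characterization, $R/I$ is sequentially Cohen--Macaulay if and only if each such pure skeleton is Cohen--Macaulay of the correct dimension. Applying part (1) to each skeleton separately translates this into the statement that $(I^\vee)_d$ has a linear resolution for every $d$, which is precisely componentwise linearity.

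The crucial technical input is Terai's duality formula, which is non-elementary: it rests on Hochster's formula identifying the graded local cohomology of $R/I_\Delta$ with reduced simplicial cohomology of links in $\Delta$, matched term by term with the graded Betti numbers of $I_{\Delta^\vee}$ via Alexander duality of reduced homology. The most delicate bookkeeping step, needed for (2), is the precise identification of $((I^\vee)_d)^\vee$ with the Stanley--Reisner ideal of the relevant pure skeleton of $\Delta$; once this matching is set up, part (2) reduces cleanly to applying (1) piece by piece.
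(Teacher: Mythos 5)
The paper cites Theorem~\ref{CMAD} from Eagon--Reiner and Herzog--Hibi without giving a proof, so there is no in-house argument to compare against; what follows is an assessment of your sketch on its own merits.

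Your argument for part (1) via Terai's formula $\reg(I^\vee)=\pd(R/I)$ together with Auslander--Buchsbaum is a standard and correct proof. (One minor slip: the identity $\height(I)=n-\dim(R/I)$ holds for every ideal of a polynomial ring and is not a consequence of unmixedness; unmixedness is what you need to conclude that $I^\vee$ is generated in a single degree.) For part (2), however, there is a genuine gap. You assert that the ideal $((I^\vee)_d)$ generated by the degree-$d$ elements of $I^\vee$ is a squarefree monomial ideal whose Alexander dual is the Stanley--Reisner ideal of a pure skeleton of $\Delta$. Neither half is correct as stated. If $I^\vee$ has a minimal generator of degree strictly less than $d$, then $(I^\vee)_d$ contains non-squarefree degree-$d$ monomials and is in general not a squarefree ideal, so its Alexander dual is not even defined. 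The object one must use is the \emph{squarefree} degree-$d$ part $I^\vee_{[d]}$, the ideal generated by the squarefree degree-$d$ monomials in $I^\vee$; a direct computation shows that $(I^\vee_{[d]})^\vee$ is the Stanley--Reisner ideal of the pure $(n-d-1)$-skeleton of $\Delta$. Applying part (1) to each $I^\vee_{[d]}$ and invoking Duval's criterion then yields the equivalence of sequential Cohen--Macaulayness of $\Delta$ with the linearity of the resolutions of all $I^\vee_{[d]}$. But that is ``squarefree componentwise linearity,'' not componentwise linearity as defined in the paper, which refers to $(I^\vee_d)$. Closing the gap requires the nontrivial Herzog--Hibi comparison theorem: for a squarefree monomial ideal $J$, the ideals $(J_d)$ all have linear resolutions if and only if the ideals $J_{[d]}$ all do. Without this step, your argument establishes a correct but different statement.
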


Another important technique that we shall employ is a combinatorial characterization for componentwise linear ideals.

\begin{definition} Let $I \subseteq R$ be a squarefree monomial ideal.
\begin{enumerate}
\item We say that $I$ has \emph{linear quotients} if there exists an ordering of its generators, $I = (m_1,\ldots,m_{\mu})$, such that for all $i > 1$,
$$((m_1,\ldots,m_{i-1}) : (m_i)) = (x_{k_1},\ldots,x_{k_s})$$
for some variables $x_{k_1},\ldots,x_{k_s}$. In this case, such an ordering $(m_1,\ldots,m_{\mu})$ is called a \emph{linear quotients ordering} of $I$.
\item We say that $I$ has \emph{dual linear quotients} if $I^\vee$ has linear quotients.
\end{enumerate}
\end{definition}

\begin{theorem}[\protect{\cite[Corollary 8.2.21]{Herzog-Hibi-book}}] \label{LinearQuotients}
Let $I \subseteq R$ be a squarefree monomial ideal. If $I$ has linear quotients, then $I$ is componentwise linear.
\end{theorem}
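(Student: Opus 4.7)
The plan is to show that each component ideal $(I_d)$ of $I$ admits a linear free resolution, which is exactly the definition of componentwise linearity. I would carry this out in two main steps.

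First, I would show that if $I$ has linear quotients with respect to some ordering $m_1, \ldots, m_\mu$ of its generators, then $(I_d)$ also admits linear quotients for each $d \in \NN$. This requires producing a suitable ordering of the generators of $(I_d)$---which include the degree-$d$ minimal generators of $I$ together with degree-$d$ monomial multiples of smaller minimal generators of $I$---and verifying that the corresponding colon ideals remain variable-generated. The argument is combinatorial in nature: one rearranges the original linear quotients ordering so that generators of smaller degree come first, then interleaves the non-minimal degree-$d$ monomials appropriately, using at each step the fact that a variable $x_k$ lying in the colon $((m_1, \ldots, m_{j-1}) : m_j)$ always arises as $m_p/\gcd(m_p, m_j) = x_k$ for some $p < j$.

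Next, I would establish the single-degree case: if a monomial ideal $J$ generated in a single degree $d$ has linear quotients, then it has a linear free resolution. I argue by induction on the number of generators. Setting $J' = (m_1, \ldots, m_{\mu-1})$, the inductive hypothesis provides a linear resolution of $J'$, while the colon $(J' : m_\mu)$ is generated by a subset of variables, so its minimal resolution is a Koszul complex and is linear. Applying the mapping cone construction to the short exact sequence
\[
0 \to R/(J' : m_\mu)[-d] \xrightarrow{\cdot m_\mu} R/J' \to R/J \to 0
\]
and tracking internal degrees through the associated long exact sequence for $\Tor_\bullet^R(-, K)$ shows that $\Tor_i^R(R/J, K)$ is concentrated in internal degree $i + d - 1$ for every $i \geq 1$, which is exactly the condition that $J$ has a linear resolution.

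Combining the two steps, every $(I_d)$ admits a linear resolution, so $I$ is componentwise linear. The main obstacle is the first step: it is genuinely nontrivial to verify that the linear quotients property descends to each degree component, because one must not only reorder the given generators but also enlarge the generating set from the minimal generators of $I$ to all degree-$d$ monomials of $I$, and then produce an ordering making the colon ideals variable-generated. Once this combinatorial reduction is in place, the mapping cone argument of the second step is routine, since ideals generated by subsets of variables have linear (Koszul) minimal resolutions.
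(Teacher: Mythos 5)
The paper does not prove this statement at all; it is stated as a citation to \cite[Corollary 8.2.21]{Herzog-Hibi-book}, so there is no internal proof to compare against. Your two-step outline is the standard \emph{direct} route (rather than the Alexander-duality-and-shellability route that the cited book follows for squarefree ideals), and Step~2 is correct as written: once each $(I_d)$ is known to have linear quotients, the mapping-cone/Koszul argument gives a linear resolution, and that is exactly componentwise linearity. Note that your argument does not use squarefreeness, so it actually proves the statement for arbitrary monomial ideals.

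However, your Step~1 conceals the real content of the theorem and is too vague to stand as a proof. The claim that a linear quotients ordering can be \emph{rearranged} so that degrees are nondecreasing while preserving the linear quotients property is not a routine bookkeeping observation; it is itself a theorem (Jahan--Zheng, \emph{Ideals with linear quotients}, J.\ Combin.\ Theory Ser.\ A 117 (2010), Theorem~2.1), and a priori the given ordering need not admit such a rearrangement without a careful exchange argument. Likewise, the assertion that one can ``interleave the non-minimal degree-$d$ multiples appropriately'' is precisely what must be verified: the minimal generators of $(I_d)$ are \emph{all} degree-$d$ monomials in $I$, so one has to exhibit an explicit ordering of this possibly much larger set and check the colon condition at every step. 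Your one-line justification (that a colon variable always arises as $m_p/\gcd(m_p,m_j)$) is a fact about the original generators of $I$, not a proof that the enlarged generating set of $(I_d)$ admits a linear quotients ordering. To make Step~1 rigorous you should either prove the degree-reordering lemma (induction on the number of out-of-order adjacent pairs, swapping and re-verifying the colon condition) and then explicitly order the generators of $(I_d)$ by: degree-$d$ multiples of lower-degree minimal generators first (consistently with the order on the lower-degree part), followed by the degree-$d$ minimal generators in the given order; or simply cite Jahan--Zheng for the componentwise linear quotients statement and keep only your Step~2.
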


The following lemma is a technical result that we shall use. With respect to a monomial ideal $I$, we say that $x_i$ is a \emph{free variable} if $x_i$ occurs in exactly one minimal generator of $I$.

\begin{lemma}\label{FreeVar-depth}
Let $I\subset R$ be a monomial ideal and let $x_i$ be a
free variable with respect to $I$. Let $\bsx^\bsa$ be the monomial of $\mathrm{gen}(I)$ in which $x_i$ occurs. For any positive integer $m$ define
$I_m=((\mathrm{gen}(I)\setminus\{\bsx^\bsa\})\cup\{x_i^m{\bsx^\bsa}\})$. Then
$$\depth(R/I)= \depth(R/I_m).$$
In particular, $I$ is Cohen-Macaulay of height $g$ if and only if $I_m$ is Cohen-Macaulay of height $g$.
\end{lemma}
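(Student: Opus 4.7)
The plan is to derive both conclusions from the observation that the LCM lattices of $I$ and $I_m$ are isomorphic as posets. Write the minimal generators of $I$ as $f_1,\dots,f_{r-1},\bsx^\bsa$, where by the free-variable hypothesis none of the $f_j$ involves $x_i$; the minimal generators of $I_m$ are then $f_1,\dots,f_{r-1},x_i^m\bsx^\bsa$, in bijection with those of $I$ via the natural index correspondence.

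For any subset $S \subseteq \{1,\dots,r\}$, the lcms $\lcm_I(S)$ and $\lcm_{I_m}(S)$ have literally identical exponents in every variable except possibly $x_i$. In the $x_i$ slot, both exponents are $0$ when $r \notin S$, and they equal $a$ versus $a+m$ respectively when $r \in S$ (where $a$ denotes the exponent of $x_i$ in $\bsx^\bsa$). I would then check that the bijection $\lcm_I(S) \mapsto \lcm_{I_m}(S)$ is a poset isomorphism between the LCM lattices $L(I)$ and $L(I_m)$: divisibility in either lattice is coordinatewise on exponents; the non-$x_i$ conditions are literally the same for $I$ and $I_m$; and the $x_i$-condition reduces in both cases to the implication ``$r \in S_1 \Rightarrow r \in S_2$'', since an exponent from $\{0,c\}$ divides another from $\{0,c\}$ under exactly this on/off condition, independent of the positive value $c$.

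Now I invoke the theorem of Gasharov--Peeva--Welker on LCM lattices: the total Betti numbers of a monomial ideal are determined by the poset structure of its LCM lattice. This yields $\pd_R(R/I)=\pd_R(R/I_m)$, and the Auslander--Buchsbaum formula then gives $\depth(R/I)=n-\pd_R(R/I)=n-\pd_R(R/I_m)=\depth(R/I_m)$, which is the first assertion. For the Cohen--Macaulay statement, observe additionally that $\sqrt{I}=\sqrt{I_m}$ since replacing $x_i^a$ by $x_i^{a+m}$ inside a single generator (which already involves $x_i$) does not change its squarefree support; hence $\height(I)=\height(I_m)$ and $\dim(R/I)=\dim(R/I_m)$. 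Combined with the depth equality just established, the defining condition $\depth=\dim=n-g$ of Cohen--Macaulayness of height $g$ holds for $R/I$ if and only if it holds for $R/I_m$. The only real work lies in the poset-isomorphism verification of the second paragraph, which is a direct bookkeeping of monomial exponents and is the only mild obstacle.
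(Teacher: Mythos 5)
The paper states Lemma~\ref{FreeVar-depth} without proof, so there is no argument of record to compare against; I will assess your proposal on its own terms. Your proof is correct. The LCM-lattice isomorphism you set up is genuine, and the one spot where care is needed is exactly the one you flag implicitly: since $x_i$ is required to \emph{occur} in $\bsx^\bsa$, the exponent $a=\deg_{x_i}\bsx^\bsa$ satisfies $a\ge 1$, so in the $x_i$-coordinate the value of $\lcm_I(S)$ is $0$ or $a$ (and of $\lcm_{I_m}(S)$ is $0$ or $a+m$) according to whether the distinguished generator lies in $S$, and divisibility in that coordinate reduces in both ideals to the same on/off implication. Were $a=0$ allowed, the exponent pattern for $I$ would be constant $0$ while that for $I_m$ would be $\{0,m\}$, and the lattices need not be isomorphic; so the free-variable hypothesis is doing real work here and it would be worth saying so explicitly. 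Your remaining steps are also sound: none of the other generators can divide $x_i^m\bsx^\bsa$ precisely because they involve no $x_i$, so $\mathrm{gen}(I_m)$ is as you claim; a poset isomorphism of the LCM lattices automatically carries atoms to atoms, so Gasharov--Peeva--Welker applies and yields equal total Betti numbers, whence equal projective dimension and, by Auslander--Buchsbaum in the fixed ring $R$, equal depth; and $\sqrt{I}=\sqrt{I_m}$ gives equal height and dimension, making the Cohen--Macaulay-of-height-$g$ equivalence immediate. Invoking GPW is somewhat heavier machinery than one might expect for a preliminary lemma, and the paper's overall toolkit suggests its authors may have had a polarization-based argument in mind, but your route is clean, self-contained, and correct.
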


To distinguish between directed edges of an oriented graph and undirected edges of its underlying graph, we shall use the ordered pair $(x,y)$ to denote the directed edge going from $x$ to $y$, and use the unordered set $\{x,y\}$ to denote the undirected edge between $x$ and $y$.
Let $\mathcal{D}$ be an oriented graph, let $G$ be its underlying
graph, and let $v$ be a vertex of $\D$. The {\it out-neighborhood} of
$v$, denoted $N_\mathcal{D}^+(v)$, consists of all $u$ in $V(\D)$
such that $(v,u)\in E(\D)$. The {\it in-neighborhood} of $v$, denoted
$N_\mathcal{D}^-(v)$, is the set of all $u$ in $V(\D)$
such that $(u,v)\in E(\D)$. Note that $N_G(v)$, the neighbor set of $v$ in
$G$, is equal to $N_\mathcal{D}^+(v)\cup N_\mathcal{D}^-(v)$. A
non-isolated vertex $u\in V(\D)$ in an oriented graph $\D$ is a \emph{source} (respectively, a \emph{sink}) if it does not have any in-neighbors (respectively, out-neighbors).

We shall now recall a number of key notions and observations from \cite{PRT} that will be useful for our purpose.

\begin{definition}[\protect{\cite[Definition~2.3]{PRT}}] Let $\D$ be an oriented graph and let $G$ be its underlying graph.
For a vertex cover (not necessarily minimal) $C$ of $G$, define
\begin{align*}
L_1(C) & = \{x \in C ~|~ \exists (x,y) \in E(\D) \text{ with } y \not\in C\} \\
L_3(C) & = \{x \in C ~|~ N_G(x) \subseteq C\} \\
L_2(C) & = C \setminus (L_1(C) \cup L_3(C)).
\end{align*}
A vertex cover $C$ of $G$ is called a \emph{strong vertex cover} of $\D$ if $C$ is a minimal vertex cover of $G$ or for
all $x\in L_3(C)$ there is $(y,x)\in E(\D)$ such that $y\in
L_2(C)\cup L_3(C)$ with $\w(y) \ge 2$.
\end{definition}

\begin{lemma}[\protect{\cite[Theorem~4.2]{PRT}}] \label{StrongVC}
$I(\D)$ is unmixed if and only if $G$ is unmixed and $L_{3}(C)=\emptyset$ for any strong vertex cover $C$ of $\D$.
\end{lemma}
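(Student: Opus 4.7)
The plan is to deduce the lemma from the standard combinatorial description of the associated primes of $I(\D)$, namely
$$\Ass(R/I(\D)) = \{P_C \mid C \text{ is a strong vertex cover of } \D\},$$
where $P_C = (x_i \mid x_i \in C)$. This description (proved elsewhere in \cite{PRT}) is the key ingredient I would invoke; granted this, the lemma reduces to bookkeeping about heights of the primes $P_C$.

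First I would record two elementary observations about vertex covers of $G$. \emph{Observation 1:} every minimal vertex cover $C$ of $G$ is automatically a strong vertex cover of $\D$ (this is built into the definition). \emph{Observation 2:} a vertex cover $C$ of $G$ is a minimal vertex cover if and only if $L_3(C) = \emptyset$; indeed, $x \in L_3(C)$ means $N_G(x) \subseteq C$, which is precisely the obstruction to $C \setminus \{x\}$ still being a cover, so $L_3(C) = \emptyset$ iff every vertex of $C$ has a neighbor outside $C$ iff $C$ is minimal. Combining these, ``$L_3(C) = \emptyset$ for every strong vertex cover $C$'' is equivalent to ``every strong vertex cover of $\D$ is a minimal vertex cover of $G$.''

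For the forward direction, suppose $I(\D)$ is unmixed. The minimal primes of $I(\D)$ are exactly the primes $P_C$ with $C$ a minimal vertex cover of $G$, so they are all of the same height, which says $G$ is unmixed. Now let $C$ be any strong vertex cover of $\D$. By the description of $\Ass(R/I(\D))$, $P_C$ is an associated prime of $I(\D)$, hence by unmixedness $\height P_C = |C|$ equals the common height of the minimal primes, i.e., the minimum cardinality of a vertex cover of $G$. But $C$ contains some minimal vertex cover $C'$ of $G$, and $|C| = |C'|$ forces $C = C'$; thus $C$ is itself a minimal vertex cover of $G$, and by Observation 2, $L_3(C) = \emptyset$.

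For the converse, assume $G$ is unmixed and $L_3(C) = \emptyset$ for every strong vertex cover $C$ of $\D$. By the equivalence from Observation 2, every strong vertex cover $C$ is a minimal vertex cover of $G$, hence has cardinality equal to the (common) vertex-cover number of $G$. Therefore all primes $P_C$ with $C$ strong have the same height, and by the description of $\Ass(R/I(\D))$, $I(\D)$ is unmixed. The main obstacle, and really the only nontrivial input, is the characterization of associated primes of $I(\D)$ in terms of strong vertex covers; once that is in hand, the lemma is essentially a dictionary translation between ``$L_3(C)=\emptyset$'' and ``$C$ is a minimal cover of $G$.''
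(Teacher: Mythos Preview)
Your argument is correct. The paper does not prove this lemma at all; it simply cites it as \cite[Theorem~4.2]{PRT}, so there is no ``paper's own proof'' to compare against. Your deduction from the associated-primes characterization (stated in the paper as Theorem~\ref{prt-main}, also cited from \cite{PRT}) is a clean and natural way to recover the result, and Observations~1 and~2 are the right bridge between the algebraic unmixedness and the combinatorial condition $L_3(C)=\emptyset$.

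One minor comment: in the forward direction you assert that the minimal primes of $I(\D)$ are exactly the $P_C$ with $C$ a minimal vertex cover of $G$. This is true, but it is worth saying why: the minimal primes of $I(\D)$ coincide with those of its radical $\sqrt{I(\D)}=I(G)$, and the latter are well known to correspond to minimal vertex covers. Alternatively, you can avoid this by noting directly from Theorem~\ref{prt-main} that the minimal elements of $\Ass(R/I(\D))$ are the $P_C$ with $C$ a minimal strong vertex cover, and these are precisely the minimal vertex covers of $G$ by your Observation~1 and the fact that any strong vertex cover contains a minimal one.
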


\begin{theorem}[{\rm\cite[Theorem~3.11]{PRT}}]\label{prt-main} Let $\mathcal{D}$ be a weighted
oriented graph. Then $\mathfrak{p}$ is an associated prime of
$I(\mathcal{D})$ if and only if $\mathfrak{p}=(C)$ for some strong
vertex cover $C$ of $\mathcal{D}$.
\end{theorem}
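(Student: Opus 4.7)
My plan is to prove the two directions separately, using the standard fact that $\pp$ is an associated prime of a monomial ideal $I$ if and only if $\pp = (I:m)$ for some monomial $m \notin I$, and that in this case $\pp$ is generated by variables. Write $\pp = (C)$ throughout, where $C \subseteq V(\D)$.

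\emph{Forward direction.} Assume $\pp = (I(\D):m)$ is an associated prime and let $C$ be the set of variables generating $\pp$. I would first observe that $\pp \supseteq \sqrt{I(\D)} = I(G)$, which implies that $C$ is a vertex cover of $G$: indeed, for every edge $\{x_i,x_j\}$ of $G$, the squarefree monomial $x_ix_j$ lies in $\pp$, forcing $x_i \in C$ or $x_j \in C$. If $C$ is already a minimal vertex cover, we are done by definition of a strong vertex cover, so assume it is not. I would then normalize $m$ so that $\supp(m) \cap C = \emptyset$ (replacing $m$ by a suitable quotient does not enlarge $(I(\D):m)$, and one checks the resulting colon is still exactly $\pp$). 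For each $x \in L_3(C)$, the condition $xm \in I(\D)$ yields a generator $x_a x_b^{\w_b}$ dividing $xm$. A short case analysis on whether $x = x_a$ or $x = x_b$, combined with the hypothesis $N_G(x) \subseteq C$ and the normalization $\supp(m) \cap C = \emptyset$, will force $x = x_b$, $x_a \mid m$, and the exponent of $x_b$ in $m$ to be exactly $\w_b - 1 \ge 1$. This will produce an edge $(y,x) = (x_a,x_b) \in E(\D)$ with $\w(y) = \w(x) \ge 2$ and $y \notin L_1(C)$ (otherwise $ym$ would already lie in $I(\D)$ through a different generator, contradicting minimality of the exponent), so $y \in L_2(C) \cup L_3(C)$ as required.

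\emph{Converse direction.} Assume $C$ is a strong vertex cover; I need to build a monomial $m$ with $(I(\D):m) = (C)$. Setting $m_0 = \prod_{x_j \notin C} x_j$ handles the case where $C$ is a minimal vertex cover: the classical argument for unweighted edge ideals shows $(I(G):m_0)$ is the monomial prime associated with $V \setminus C$'s complement, and one checks the weighted generators behave the same way because each $x_ix_j^{\w_j}$ divides $x_i m_0$ (since $x_j^{\w_j}$ cannot enter $m_0$ forcing the colon to contain $x_i$). For a general strong vertex cover, I would modify $m_0$ by multiplying, for each $x \in L_3(C)$, by $y^{\w_y - 1}$ where $(y,x) \in E(\D)$ is an edge with $y \in L_2(C) \cup L_3(C)$ and $\w(y) \ge 2$ as guaranteed by the definition. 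The key verification is that $xm \in I(\D)$ through the edge $(y,x)$ (since then $x \cdot y^{\w_y - 1}$ together with the remaining $y$-free factor realize the generator $yx^{\w_x}$... more carefully, through the generator corresponding to $(y,x)$), while no unwanted variable outside $C$ enters the colon.

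\emph{Main obstacle.} The delicate step will be the normalization and subsequent case analysis in the forward direction, and the dual construction in the converse: one must track the exponents in $m$ very carefully to guarantee that $(I(\D):m)$ is exactly $(C)$, not just contained in or containing it. The weighted/oriented structure means that an edge contributes differently depending on which endpoint is the source, and the role of the weight $\w(y) \ge 2$ in the strong cover condition is precisely to leave room for $y^{\w_y - 1}$ to sit in $\supp(m)$ without itself pulling $y$ into the colon ideal. Getting this bookkeeping right—particularly ensuring that vertices outside $C$ do not accidentally appear in $(I(\D):m)$—is where the argument requires the most care.
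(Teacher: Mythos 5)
This theorem is not proved in the paper; it is cited directly as \cite[Theorem~3.11]{PRT}, so there is no in-paper argument to compare against.

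On the merits, the forward direction as written contains an internal contradiction. You propose to normalize $m$ so that $\supp(m)\cap C=\emptyset$, but your subsequent deduction defeats this: for $x\in L_3(C)$, you take a generator $x_ax_b^{\w_b}$ of $I(\D)$ dividing $xm$ and, in the case $x=x_b$, conclude $x_a\mid m$ and that $x_b$ has exponent $\w_b-1\geq 1$ in $m$. Since $x\in L_3(C)$ forces $N_G(x)\subseteq C$, both $x_a$ and $x_b$ lie in $C$, so $x_a\in\supp(m)\cap C$ (and, if $\w_b\geq 2$, also $x_b\in\supp(m)\cap C$), contradicting the normalization you just imposed. The case $x=x_a$ leads to the same clash via $x_b^{\w_b}\mid m$ with $x_b\in C$. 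The normalization is not merely unverified---it is false for non-radical monomial ideals: already $(x^2:x)=(x)$ has $m=x$ as its only witness, with no smaller one. The positive exponents of $C$-variables in $m$ are exactly what records the weighted, oriented data, and a correct proof must analyze them rather than delete them (for instance by taking $m$ divisibility-minimal subject to $(I(\D):m)=(C)$ and reading off the exponent pattern). Separately, the strong-cover condition requires $\w(y)\geq 2$ for the \emph{tail} $y=x_a$, whereas your case analysis only yields information about the exponent of the head $x=x_b$ in $m$; the line ``$\w(y)=\w(x)\geq 2$'' conflates the two, and $\w(x)\geq 2$ does not follow without the (unavailable) normalization anyway.

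The converse direction has a matching gap. With the squarefree $m_0=\prod_{x_j\notin C}x_j$ and a generator $x_ix_j^{\w_j}$ where $x_i\in C$, $x_j\notin C$, you need $x_j^{\w_j}\mid x_im_0$ to put $x_i$ into $(I(\D):m_0)$, and this fails whenever $\w_j\geq 2$. For a minimal vertex cover $C$ the construction can be avoided entirely: $(C)$ is a minimal prime of $I(G)=\sqrt{I(\D)}$ and hence a minimal, in particular associated, prime of $I(\D)$. For a non-minimal strong cover, a correct witness $m$ must carry the weights of vertices outside $C$ and of the tails $y$ supplied by the strong-cover condition; you correctly flag this bookkeeping as the main obstacle, but it is left unresolved in the sketch.
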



\section{Oriented graphs with perfect matchings}\label{whisker-digraphs-section}

In this section, we shall prove our main results. Particularly, we
will give various equivalent algebraic and combinatorial
characterizations for the Cohen-Macaulayness of an oriented graph.
Our results extend \cite[Theorem 3.1]{VV} from the case where the
underlying graph is a forest to a more general situation. Our method
also gives an affirmative answer to an open problem stated in \cite[Remark 3.2]{VV}.

It follows from \cite[Theorem 3.1]{VV} that if the edge ideal of an oriented graph $\D$ is Cohen-Macaulay and its underlying graph $G$ is a forest, then $G$ has a perfect matching $$\{x_1,y_1\}, \dots, \{x_r,y_r\},$$ where $y_1, \dots, y_r$ are leaf vertices. Our results address the class of oriented graphs whose underlying graphs have such a perfect matching.

Our first main theorem gives an easy combinatorial characterization of Cohen-Macaulay weighted oriented graphs.

\begin{theorem}\label{cm-weighted-orientes-graphs}
Let $\D$ be a weighted oriented graph and let $G$ be its underlying graph. Suppose that $G$ has a
perfect matching $\{x_1,y_1\},\ldots,\{x_r,y_r\}$, where $y_i$'s are leaf vertices. Then the following are equivalent:
\begin{itemize}
\item[(a)] $\D$ is a Cohen-Macaulay weighted oriented graph;
\item[(b)] $I(\D)$ is unmixed; that is, all its associated primes have the same height;
\item[(c)] $\w(x_s)=1$ for any edge $(x_s,y_s)$ of $\D$.
\end{itemize}
\end{theorem}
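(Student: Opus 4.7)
The implication (a) $\Rightarrow$ (b) is immediate since Cohen--Macaulay rings are unmixed, so I focus on (b) $\Rightarrow$ (c) and (c) $\Rightarrow$ (a). For (b) $\Rightarrow$ (c), I argue by contrapositive. Suppose some $(x_s, y_s) \in E(\D)$ has $\w(x_s) \ge 2$. Since sources carry weight one by convention, $x_s$ is not a source and must have an in-neighbor in $\D$; this in-neighbor cannot be $y_s$, as $(x_s, y_s) \in E(\D)$ precludes $(y_s, x_s) \in E(\D)$, so there exists some $x_j$ with $(x_j, x_s) \in E(\D)$. I claim that $C := (\{x_1, \dots, x_r\} \setminus \{x_j\}) \cup \{y_j, y_s\}$ is a strong vertex cover of $\D$ with $L_3(C) = \{y_s\}$, after which Lemma \ref{StrongVC} will imply that $I(\D)$ is not unmixed. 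That $C$ is a vertex cover is clear since swapping $x_j$ for $y_j$ still covers every edge incident to $x_j$ (the $y_k$'s being leaves). A direct check of neighborhoods yields $L_3(C) = \{y_s\}$: in particular, $x_s \notin L_3(C)$ because its neighbor $x_j$ is no longer in $C$, while $x_s \in L_2(C)$ because the only vertex missing from $C$ that is adjacent to $x_s$ is $x_j$, which is an in-neighbor rather than an out-neighbor of $x_s$. The strongness condition at $y_s \in L_3(C)$ is then supplied by the edge $(x_s, y_s) \in E(\D)$, together with $\w(x_s) \ge 2$ and $x_s \in L_2(C)$.

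For (c) $\Rightarrow$ (a), I will construct an Artinian monomial ideal $A \subset K[z_1, \dots, z_r]$ whose polarization coincides with $I(\D)^{\pol}$ after a relabeling of variables; since Artinian rings are Cohen--Macaulay, two applications of Theorem \ref{PCM} will then transfer Cohen--Macaulayness to $R/I(\D)$. Under (c), each matching-edge generator of $I(\D)$ has the form $x_i y_i^{\w(y_i)}$ (when $(x_i, y_i) \in E(\D)$, where (c) forces $\w(x_i) = 1$) or $y_i x_i^{\w(x_i)}$ (when $(y_i, x_i) \in E(\D)$, where $\w(y_i) = 1$ by the source convention); in either case exactly one of $\w(x_i)$ and $\w(y_i)$ equals $1$. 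I define $A$ by replacing each matching-edge generator with the pure power $z_i^{\w(x_i) + \w(y_i)}$, and each non-matching generator $x_a x_b^{\w(x_b)}$ of $I(\D)$ with $z_a z_b^{\w(x_b)}$. Since each $z_i$ has a pure power among the generators of $A$, the ideal $A$ is Artinian.

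It remains to verify $A^{\pol} = I(\D)^{\pol}$ under a suitable identification of variables. The intended identification sends $z_{i, k}$ to $x_{i, k}$ for $1 \le k \le \w(x_i)$, and sends $z_{i, \w(x_i) + k}$ to $y_{i, k}$ for $1 \le k \le \w(y_i)$. Because the $y_i$'s are leaves, each $y_i$ appears in exactly one generator of $I(\D)$, so the ``leaf'' copies of $z_i$ appear only in the polarization of the pure power $z_i^{\w(x_i) + \w(y_i)}$, and the identification is globally consistent. I expect the main technical obstacle to lie in this consistency check---specifically, verifying that every non-matching generator $z_a z_b^{\w(x_b)}$ of $A$ polarizes to use precisely the ``non-leaf'' copies $z_{b, 1}, \dots, z_{b, \w(x_b)}$ of $z_b$, so that these match the copies of $x_b$ used by the corresponding polarized generator of $I(\D)$; the leaf hypothesis on the $y_i$'s is what makes this bookkeeping close up.
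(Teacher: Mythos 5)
Your proposal is correct and follows essentially the same path as the paper's proof: the same strong vertex cover $C = (\{x_1,\dots,x_r\}\setminus\{x_l\})\cup\{y_l,y_s\}$ with $L_3(C)=\{y_s\}$ for (b) $\Rightarrow$ (c), and the same Artinian ideal $J = I(\H) + (z_1^{\w(x_1)+1},\dots,z_r^{\w(x_r)+1})$ whose polarization is identified with $I(\D)^{\pol}$ via $z_{i,j}\mapsto x_{i,j}$ ($j\le\w(x_i)$), $z_{i,\w(x_i)+1}\mapsto y_{i,1}$ for (c) $\Rightarrow$ (a). The consistency check you flag as a possible obstacle is genuinely settled by the leaf hypothesis exactly as you indicate, since $y_i$ (hence the ``extra'' polarizing variable $z_{i,\w(x_i)+1}$) occurs in only the single matching generator.
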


\begin{proof}
We will show that (a) $\Rightarrow$ (b) $\Rightarrow$ (c) $\Rightarrow$ (a).

It is a well-known fact that if $R/I(\D)$ is Cohen-Macaulay, then all the associated
primes of $I(\D)$ have the same height (see, for example, \cite[Theorem
17.3]{Mat}). Thus, (a) implies (b).

To see that (b) implies (c), assume that $I(\D)$ is unmixed and $(x_s,y_s) \in E(\D)$.
Suppose, by contradiction, that $\w(x_s)>1$. By our initial assumption that a source vertex has weight 1, $x_{s}$ is not a source. Thus, there is at least one incoming edge at $x_s$. Since $y_i$'s are leaf vertices in $G$, there exists $l$ with
$(x_l,x_s)\in E(\D)$ (which, in particular, implies that $(x_s,x_l) \not\in E(\D)$ and $\deg_G(x_s) \ge 2$). Observe that
$$C=(\{x_{1},\dots,x_{r}\}\setminus \left\{x_l\right\}) \cup \left\{y_l,y_s\right\}$$
is a (non-minimal) vertex cover of $G$. Observe also that $L_{3}(C)=\{y_{s}\}$ and $x_s \not\in L_1(C)$. This implies that $x_s \in
L_2(C)$. Therefore, $C$ is a strong vertex cover of $\D$ because $(x_s,y_s) \in E(\D)$ and $\deg_G(x_s) \ge 2$. Hence, it follows from Lemma \ref{StrongVC} that $I(\D)$ is not unmixed, a contradiction. That is, we must have $\w(x_s) = 1$, and so (b) implies (c).

It remains to prove that (c) implies (a). To achieve this we will use polarization techniques. For simplicity of notation, set $\w_i=\w(x_i)$ for $i=1,\dots,r$. By Lemma~\ref{FreeVar-depth}, we may also assume that $\w(y_i)=1$ for all
$i = 1, \dots, r$.

Let $\D[X]$ be the induced oriented subgraph of $\D$ on the vertex set
$X=\{x_1,\dots,x_r\}$. Let $\H$ be a copy of $\D[X]$ on the vertex set $Z = \{z_1, \dots, z_r\}$ obtained by replacing $x_i$ with $z_i$, for $i = 1, \dots, r$, and let $S = K[z_1, \dots, z_r]$ (in particular, the weight of $z_i$ in $\H$ is $\w_i$). Consider the ideal
$$J = I(\H) + (z_1^{w_1+1}, \dots, z_r^{\w_r+1}) \subseteq S.$$

By re-indexing if necessary, we may assume that for some $k \le r$, $\w_i \ge 2$ for $1 \le i < k$ and $\w_i = 1$ for $k \le i \le r$. It follows from the assumption in (c) that $(y_i,x_i) \in E( \D)$ for $i=1,\dots,k$. Thus,
$$
I(\D)=(y_1x_1^{\w_1},\dots, y_kx_k^{\w_k},x_{k+1}y_{k+1},\dots,x_ry_r) + I(\D[X]).
$$

Observe that in the polarization construction of $J$, the variables $z_1, \dots, z_r$ are replaced by $z_{1,1}, \dots, z_{r,1}$, and
$$\big(z_i^{\w_i+1}\big)^{\pol} = z_{i,1} \dots z_{i,\w_i+1} \text{ for } i = 1, \dots, r.$$
On the other hand, in the polarization construction of $I(\D)$, the variables $x_1, \dots, x_r$ are replaced by $x_{1,1}, \dots, x_{r,1}$, and
$$\big(y_ix_i^{\w_i}\big)^{\pol} = y_{i,1}x_{i,1} \dots x_{i,\w_i} \text{ for } i = 1, \dots, r.$$
Now, consider the variable identification $\phi: S^{\pol} \rightarrow R^{\pol}$ given by $z_{i,j} \mapsto x_{i,j}$ for $1 \le i \le r$ and $1 \le j \le \w_i$ and $z_{i,\w_i+1} \mapsto y_{i,1}$ for $1\le i \le r$. Then
$$I(\D)^{\pol} = \phi(J^{\pol}).$$
Moreover, since $S/J$ is an Artinian ring, $S^{\pol}/J^{\pol}$ is Cohen-Macaulay by Theorem \ref{PCM}. Hence, $R^{\pol}/I(\D)^{\pol}$ is Cohen-Macaulay and, by Theorem \ref{PCM} again, $R/I(\D)$ is Cohen-Macaulay.
\end{proof}

In general, without assuming condition (c) of
Theorem~\ref{cm-weighted-orientes-graphs}, the polarizations of
$I(\D)$ and $J$ are not necessarily isomorphic. The next example
illustrates how this can occur.

\begin{example}\label{aug11-17}
Consider the oriented graph $\D$ on the vertices $\{x_1, x_2, y_1, y_2\}$ with edges $(y_1,x_1)$, $(x_1,x_2)$, and $(x_2,y_2)$, whose weight function is given by $\w(x_1)=2$, $\w(x_2)=3$, and $\w(y_1) = \w(y_2) = 1$. The ideal $J$ in the proof of Theorem~\ref{cm-weighted-orientes-graphs} is $J=\{z_1^3, z_1z_2^{3},z_2^4\}$, and
$I(\D)=(y_1x_1^2,x_1x_2^3,x_2y_2)$. It is easy to see that the polarizations of $J$ and
$I(\D)$ are not isomorphic. In fact, $J$ is Artinian (and so Cohen-Macaulay), and $\D$ is not Cohen-Macaulay.
\end{example}

\begin{remark}
It is worth noting that there are alternate approaches to showing the Cohen-Macaulayness of $I(\D)$. One such approach is through grafted simplicial complexes. See \cite{Faridi2} for relevant definitions.
\end{remark}

Our next main result exhibits that the equivalent conditions (a), (b), and (c) of Theorem~\ref{cm-weighted-orientes-graphs} in fact imply an even stronger condition, that is, $I(\D)$ has dual linear quotients. Before giving the full precise statement, let us collect a number of important properties of vertex covers of oriented graphs.

\begin{notation}\label{assumptions}
For simplicity of notation, for the remainder of this section, by a weighted, oriented graph with whiskers we will mean that $\D$ is a weighted oriented graph whose underlying graph $G$ has a perfect matching $\{x_1,y_1\}, \dots, \{x_r,y_r\}$ where $y_i$ is a leaf for all $i$. Let $\D[X]$ denote the induced oriented subgraph of $\D$ on $X = \{x_1, \dots, x_r\}$. Recall also that in the polarization construction of $I(\D)$, we have $I(\D)^{\pol} \subseteq R^{\pol} = K[x_{i,j}, y_{i,k} ~|~ 1 \le i \le r, 1 \le j \le \w(x_i), 1 \le k \le \w(y_i)]$.
\end{notation}

\begin{lemma}\label{MVCPolarized}
Let $\D$ be a weighted oriented graph with whiskers and let $J = I(\D)^{\pol}$. Let $C$ be a minimal vertex cover of the hypergraph $H(J)$ associated to $J$, and set
\begin{align*}
C_1 & = C \cap \{x_{1,1}, \dots, x_{r,1}\} \\
C_2 & = C \cap \{x_{i,j} ~|~ 1 \le i \le r, 2 \le j \le \w(x_i)\} \\
C_3 & = C \cap \{y_{i,k} ~|~ 1 \le i \le r, 1 \le k \le \w(y_i)\}.
\end{align*}
Then $C = C_1 \cupdot C_2 \cupdot C_3$ is a disjoint union such that for each $i$ there is a $j\geq 1$ with either $x_{i,j}$ or $y_{i,j}$ in $C$. Moreover, for each $i$ such that $x_{i,1} \not\in C_1$, each of $C_2$ and $C_3$ must contain at most one of the $x_{i,j}$'s and $y_{i,k}$'s, respectively.
\end{lemma}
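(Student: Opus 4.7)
The decomposition $C = C_1 \cupdot C_2 \cupdot C_3$ will be immediate, since the three vertex subsets of $R^{\pol}$ that $C_1, C_2, C_3$ intersect are disjoint by definition and jointly exhaust all the variables of $R^{\pol}$. The ``matching edge'' statement will follow from the observation that for each $i$, the matching edge $\{x_i, y_i\}$ is oriented in $\D$ in exactly one way, yielding a generator of $I(\D)$ whose polarization is a hyperedge of $H(J)$ all of whose vertices have first index $i$. Since $C$ must cover this hyperedge, it contains some $x_{i,j}$ or $y_{i,k}$ with first index $i$.

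The main work is to prove the ``at most one'' bounds under the hypothesis $x_{i,1} \notin C$. The plan is a \emph{redundancy argument}: I will show that all the variables $x_{i,j}$ with $j \ge 2$ appear in exactly the same collection of hyperedges of $H(J)$, so if two of them were in $C$, one could be deleted to yield a smaller vertex cover, violating minimality. Concretely, a variable $x_{i,j}$ with $j \ge 2$ exists in $R^{\pol}$ only because some generator of $I(\D)$ has the form $u \cdot x_i^{\w(x_i)}$ for an in-neighbor $u$ of $x_i$, and its polarization is $u^{\pol} \cdot x_{i,1} x_{i,2} \cdots x_{i,\w(x_i)}$, which contains \emph{every} $x_{i,j}$ simultaneously. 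Hence every hyperedge of $H(J)$ containing any such $x_{i,j}$ contains all of them, and the redundancy argument applies.

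For the analogous bound on the $y_{i,k}$'s, I will use that $y_i$ is a leaf of $G$, so it appears in exactly one generator of $I(\D)$, coming from the matching edge oriented either as $(x_i, y_i)$ or as $(y_i, x_i)$. In the former case the generator $x_i y_i^{\w(y_i)}$ polarizes to a single hyperedge $\{x_{i,1}, y_{i,1}, \dots, y_{i,\w(y_i)}\}$ containing every $y_{i,k}$, and the same redundancy argument forces at most one $y_{i,k}$ to lie in $C$. In the latter case $y_i$ is a source, hence has weight $1$ by our standing convention, so $y_{i,1}$ is the unique $y_{i,k}$ that appears in $R^{\pol}$ at all and the bound $|C_3 \cap \{y_{i,k}\}_k| \le 1$ holds trivially. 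The only real obstacle is careful bookkeeping of which polarized variables co-occur in each hyperedge of $H(J)$; once that is made explicit, both ``at most one'' conclusions reduce to the standard principle that a minimal vertex cover cannot contain two vertices that lie in exactly the same collection of hyperedges.
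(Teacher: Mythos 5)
Your proposal is correct and takes essentially the same approach as the paper: the disjointness and the ``matching edge'' claim are handled identically, and the ``at most one'' bounds follow from the same key observation---namely, that the polarization construction forces every hyperedge containing $x_{i,j}$ (for $j \geq 2$) to contain all the other $x_{i,j'}$ as well, so minimality of $C$ rules out two of them coexisting (and likewise for the $y_{i,k}$'s via the leaf structure). The only cosmetic difference is that the paper phrases this as ``any hyperedge containing $x_{i,j}$ also contains $x_{i,t}$ for $t \le j$'' rather than your slightly stronger ``the $x_{i,j}$ with $j \ge 2$ all lie in exactly the same hyperedges''; both versions immediately yield the conclusion.
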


\begin{proof}
It is easy to see, from the definition of $C_1, C_2$ and $C_3$, that $C = C_1 \cupdot C_2 \cupdot C_3$ is a disjoint union. By the definition of $\D$, for each $i$, either $x_i^{\w(x_i)}y_i$ or $x_iy_i^{\w(y_i)}$ is in $I(\D)$. Thus $C$ must contain $x_{i,j}$ or $y_{i,j}$ for some $j$ to cover the polarization of this edge. Suppose that $x_{i,j} \in C$. By the polarization construction, any minimal generator of $J = I(\D)^{\pol}$ divisible by $x_{i,j}$ must also be divisible by $x_{i,t}$ for all $t \le j$. That is, any edge of $H(J)$ containing $x_{i,j}$ must also contain $x_{i,t}$ for all $t \le j$. Thus, $x_{i,t} \not\in C$ for any $t < j$ by the minimality of $C$. A similar observation holds for any $y_{i,j} \in C$. Thus $C_2$ and $C_3$ must each contain at most one of the $x_{i,j}$'s and $y_{i,k}$'s respectively.
\end{proof}

\begin{remark}\label{EdgesCausingEmbeddedPrimes}
A careful examination of the structure of $\D$ can reveal more about the set $C_2$.
Suppose $x_{s,1} \not\in C_1$ for some $s$. If there exists an edge $(x_l, x_s)$ of $\D$ with $x_{l,1} \not\in C_1$ then since $C$ must cover the edge $\big(x_lx_s^{\w(x_s)}\big)^{\pol}$ in $H(J)$, we must have $\w(x_s) > 1$ and $x_{s,j} \in C$ for some $2 \leq j \leq \w(x_s)$.
Note that if in addition $(x_s,y_s) \in I(\D)$, then $y_{s,k} \in C_3$ for some $k\geq 1$ is needed to cover the edge $(x_sy_s)^{\pol}$ in $H(J)$. Thus in the final statement of Lemma~\ref{MVCPolarized} it is possible that both of $C_2$ and $C_3$ contain an $x_{s,j}$ and $y_{s,k}$ respectively.

Conversely, if both of $C_2$ and $C_3$ contain an $x_{s,j}$ and $y_{s,k}$ respectively for some $s$, then because $C$ is minimal, in addition to the polarization of generator corresponding to the whisker edge, which is covered by $y_{s,k}$, there must be an edge of $H(J)$ covered by $x_{s,j}$ but not $y_{s,k}$. Such an edge is the polarization of a generator of $I(\D)$ of the form $x_lx_s^{\w(x_s)}$. Thus for some $l$ there is an edge $(x_l,x_s) \in \D$ with $x_{l,1} \not\in C_1$.
\end{remark}

If $\D$ is Cohen-Macaulay then the following lemma shows that for any $i$ such that $x_{i,1} \not\in C$, $C$ cannot contain a polarizing variable for both $x_i$ and $y_i$.

\begin{lemma}\label{MVCPolarizedCM}
Let $\D$ be a Cohen-Macaulay weighted oriented graph with whiskers. Consider a minimal vertex cover $C$ of $H(I(\D)^{pol})$, and suppose $C = C_1 \cupdot C_2 \cupdot C_3$ as in Lemma \ref{MVCPolarized}. Then $|C|=r$ and for each $i$ such that $x_{i,1} \not\in C_1$, either $x_{i,j} \in C_2$ for some $2 \leq j \leq \w(x_i)$ or $y_{i,k} \in C_3$ for some $1 \leq k \leq \w(y_i)$, but not both.
\end{lemma}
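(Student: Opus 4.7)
The plan is to leverage the Cohen-Macaulayness hypothesis to force every minimal vertex cover of $H(I(\D)^{\pol})$ to have the same cardinality, and then deduce the claim by a short pigeonhole argument on the disjoint families described in Lemma \ref{MVCPolarized}.

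First, I would note that since $\D$ is Cohen-Macaulay, Theorem \ref{PCM}(2) gives that $R^{\pol}/I(\D)^{\pol}$ is Cohen-Macaulay as well, and hence unmixed. Thus every minimal prime of $I(\D)^{\pol}$ has the same height, which equals $\height(I(\D)^{\pol}) = \height(I(\D))$. Translating, every minimal vertex cover $C$ of $H(I(\D)^{\pol})$ has $|C| = \height(I(\D))$.

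Next, I would compute this common height. Because each $y_i$ is a leaf in $G$, every edge of $G$ either is $\{x_i,y_i\}$ or lies entirely in $X = \{x_1,\dots,x_r\}$, so $\{x_1,\dots,x_r\}$ is a vertex cover of $G$ of size $r$. On the other hand, $\{x_1,y_1\},\dots,\{x_r,y_r\}$ is a matching of size $r$, forcing every vertex cover of $G$ to have at least $r$ elements. Hence the vertex-cover number of $G$ is $r$, so $\height(I(G)) = r$. Since the minimal primes of $I(\D)$ coincide with those of $\sqrt{I(\D)} = I(G)$, we conclude $\height(I(\D)) = r$, and therefore $|C| = r$.

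Finally, I would carry out the counting. By Lemma \ref{MVCPolarized}, for each $i \in \{1,\dots,r\}$ the cover $C$ contains at least one variable of the form $x_{i,j}$ or $y_{i,k}$. Since the sets $\{x_{i,j}, y_{i,k} \mid j,k\}$ for distinct indices $i$ are pairwise disjoint, this already gives $|C| \ge r$, and the equality $|C|=r$ forces each index $i$ to contribute \emph{exactly} one variable to $C$. For any $i$ with $x_{i,1} \notin C_1$, that unique contribution must lie in $C_2 \cup C_3$, and it lies in exactly one of $C_2$ or $C_3$; this is precisely the stated dichotomy. The argument is essentially a height computation and a disjointness count, so there is no serious obstacle — the only point requiring care is correctly identifying $\height(I(\D)^{\pol})$ with the vertex-cover number of $G$, which rests on the leaf condition and Theorem \ref{PCM}.
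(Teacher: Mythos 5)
Your proof is correct, and its two main steps mirror the paper's: use the Cohen--Macaulay hypothesis to pin down every minimal vertex cover of $H(I(\D)^{\pol})$ at cardinality $r$, then combine this with Lemma \ref{MVCPolarized} to get the dichotomy. Where you differ is in the details of both steps, and your choices are slightly cleaner. To see that $|C|=r$, the paper argues that $\D$ is unmixed by Theorem \ref{cm-weighted-orientes-graphs} and then directly exhibits $\{x_{1,1},\ldots,x_{r,1}\}$ as a minimal vertex cover of $H(I(\D)^{\pol})$ of size $r$ (it covers because every edge of $G$ meets $X$, and it is minimal because of the matching edges); you instead pass through $\height(I(\D)^{\pol})=\height(I(\D))=\height(I(G))$ and the vertex-cover/matching bound on $G$, which is logically equivalent but relies on the standard (unstated here) fact that polarization preserves height. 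For the dichotomy, the paper does a small case analysis on the orientation of the whisker edge $\{x_i,y_i\}$, extracting both ``at least one'' and ``not both'' from the structure of the polarized whisker generator; you instead get both at once from the pigeonhole count against $|C|=r$ and the pairwise disjointness of the $r$ families, taking the ``at least one'' input directly from Lemma \ref{MVCPolarized}. Your route is a bit more conceptual and avoids the case split; the paper's is more hands-on and self-contained at the level of individual generators. Both are valid.
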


\begin{proof}
For each $i$, either $(x_i,y_i)$ or $(y_i,x_i)$ is in $E(\D)$. Suppose that $(x_i,y_i)\in \D$. Then $w(x_i)=1$ and $x_{i,1}y_{i,1}\dots y_{i,\w(y_i)} \in I(\D)^{\pol}$, so either $x_{i,1} \in C$ or $y_{i,k}$ is in $C$ for some $k$ but not both since $y_i$ is a leaf of the underlying graph $G$. Similarly, if $(y_i,x_i)\in E(\D)$, then either $y_{i,1} \in C$ or $x_{i,j}$ is in $C$ for some $j$, but not both. Thus, if $x_{i,1} \not\in C$, then either $x_{i,j} \in C$ for some $2 \leq j \leq \w(x_i)$ or $y_{i,k} \in C$ for some $1 \leq k \leq \w(y_i)$.

Now, since $\D$ is Cohen-Macaulay, then $\D$ is unmixed by Theorem \ref{cm-weighted-orientes-graphs}. Observe that $\{x_{1,1}, \dots , x_{r,1}\}$ is a vertex cover of $H(I(\D)^{\pol})$, since every edge contains $x_i$ for some $i$, and it is minimal by the existence of the perfect matching in $G$. Thus, every minimal vertex cover of $H(I(\D)^{\pol})$ must have exactly $r$ elements. Hence, for each $i$ such that $x_{i,1} \not\in C$, only one of the $x_{i,j}$'s or $y_{i,k}$'s can be in $C$.
\end{proof}

\begin{corollary}\label{CompareEntries}
Under the same hypotheses as Lemma~\ref{MVCPolarizedCM}, if $C_1 \cupdot C_2 \cupdot C_3$ and $C_1' \cupdot C_2' \cupdot C_3'$ are minimal vertex covers of $H(I(\D)^{\pol})$, and $C_2=C_2'$ and $C_3=C_3'$, then $C_1=C_1'$.
\end{corollary}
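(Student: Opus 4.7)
The plan is to show that membership of $x_{i,1}$ in the first block of a minimal vertex cover is completely determined by the other two blocks, so that $C_2=C_2'$ and $C_3=C_3'$ forces $C_1=C_1'$.

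First I would combine Lemma \ref{MVCPolarized} with Lemma \ref{MVCPolarizedCM} to extract a clean trichotomy: since $\D$ is Cohen-Macaulay, every minimal vertex cover of $H(I(\D)^{\pol})$ has exactly $r$ elements. The perfect-matching structure of $G$ shows that for each index $i$ the cover must contain at least one variable of the form $x_{i,j}$ or $y_{i,k}$ to cover the polarization of the whisker edge between $x_i$ and $y_i$. A simple counting argument (r pairs, r elements, at least one per pair) then forces \emph{exactly one} such variable per $i$. Combined with Lemma \ref{MVCPolarizedCM}, this yields the trichotomy: for each $i$, precisely one of the following holds:
\begin{enumerate}
\item[(i)] $x_{i,1}\in C_1$;
\item[(ii)] $x_{i,j}\in C_2$ for some $2\le j\le \w(x_i)$;
\item[(iii)] $y_{i,k}\in C_3$ for some $1\le k\le \w(y_i)$.
\end{enumerate}

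With this trichotomy in hand, the corollary follows immediately: $x_{i,1}\in C_1$ if and only if no $x_{i,j}$ appears in $C_2$ and no $y_{i,k}$ appears in $C_3$. This is a criterion depending only on $C_2$ and $C_3$. Applying the same criterion to $C'$ and using $C_2=C_2'$, $C_3=C_3'$, we conclude that $x_{i,1}\in C_1$ if and only if $x_{i,1}\in C_1'$ for every $i$, hence $C_1=C_1'$.

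There is no real obstacle here; the only subtle point is justifying that the trichotomy is exhaustive and disjoint, which rests on the Cohen-Macaulay assumption giving the cardinality bound $|C|=r$ in Lemma \ref{MVCPolarizedCM}. Without Cohen-Macaulayness one could in principle have both a variable from $\{x_{i,j}\}_{j\ge 2}$ and one from $\{y_{i,k}\}$ in the cover (as noted in Remark \ref{EdgesCausingEmbeddedPrimes}), which would break the characterization; so it is important to invoke Lemma \ref{MVCPolarizedCM} rather than only Lemma \ref{MVCPolarized}.
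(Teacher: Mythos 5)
Your proof is correct and follows the same route as the paper: both rest on the trichotomy (exactly one of $x_{i,1}\in C_1$, $x_{i,j}\in C_2$ for some $j\ge 2$, or $y_{i,k}\in C_3$ holds for each $i$), which is exactly what Lemma~\ref{MVCPolarizedCM} and the cardinality count $|C|=r$ deliver, and then the observation that membership of $x_{i,1}$ in $C_1$ is determined by $C_2$ and $C_3$.
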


\begin{proof}
The conclusion follows from the fact that for each $i$, precisely one of the containments $x_{i,1}\in C_1$, or $x_{i, j}\in C_2$ for some $j \geq 2$, or $y_{i, j}\in C_3$ for some $j \geq 1$ occurs.
\end{proof}

\begin{remark}\label{AssGeneralForm}
By \cite[Corollary 2.6]{Faridi} the associated primes of the monomial ideal $I(\D)$ can be found by depolarizing the associate primes of $I(\D)^{\pol}$, all of which are minimal since $I(\D)^{\pol}$ is square-free.
By Lemma~\ref{MVCPolarized}, the minimal primes of $I(\D)$ will be depolarizations of minimal primes of $J=I(\D)^{\pol}$ for which for every $i$ at most one of $C_2$ and $C_3$ contains an $x_{i,j}$ or $y_{i,k}$ respectively. These primes have height $r$ and are of the form $C_x\cupdot C_y$ where $C_x \subseteq \{ x_1, \ldots , x_r\}$ forms a minimal vertex cover of the undirected underlying induced graph on $\{x_1, \ldots , x_r\}$ and $C_y = \{ y_i \, | \, x_{i,1} \not\in C_1\}$. The embedded associated primes of $I(\D)$ will be depolarizations of minimal primes of $J$ described in Lemma~\ref{MVCPolarized} where $C_2$ and $C_3$ both contain an $x_{i,j_i}$ and $y_{i, k_i}$ respectively for one or more $i$. These embedded primes can be described by looking at the directed edges of $\D$ as described in Remark~\ref{EdgesCausingEmbeddedPrimes}. All embedded primes of $I(\D)$ have the form $C \cup \{ y_{i_1}, \ldots , y_{i_t} \}$ where $C$ is a minimal prime of $I(\D)$ and for each $1 \leq s \leq t$, $d(x_{i_s}) > 1$, $(x_{i_s}, y_{i_s}) \in E(\D)$, and there is an edge $(x_{j_s}, x_{i_s})\in E(\D)$  for some $x_{j_s} \not\in C$.

\end{remark}

The following lemma allows us to construct new minimal vertex covers of $H(I(\D)^{\pol})$ from given ones. This will be useful later on in ordering the generators of the Alexander dual of $I(\D)^{\pol}$ to show linear quotients.

\begin{lemma}\label{moveDown}
Let $\D$ be a weighted oriented graph with whiskers. Suppose that $C$ is a minimal vertex cover of $H(I(\D)^{\pol})$.  If $y_{i,j} \in C$ for some $j \geq 2$, then $C \setminus \{y_{i, j} \} \cup \{ y_{i, j-1} \}$ is also a minimal vertex cover of $H(I(\D)^{\pol})$. A similar statement holds if $x_{i, j} \in C$ for some $j \geq 3.$
\end{lemma}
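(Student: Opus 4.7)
The plan is to exploit the fundamental polarization fact that every minimal generator of $I(\D)^{\pol}$ divisible by $y_{i,j}$ is also divisible by $y_{i,j-1}$, since polarization replaces $y_i^{a}$ by the consecutive product $y_{i,1}y_{i,2}\cdots y_{i,a}$. First I would observe that $y_{i,j-1}\notin C$: otherwise both $y_{i,j}$ and $y_{i,j-1}$ lie in $C$, and since every edge of $H(I(\D)^{\pol})$ containing $y_{i,j}$ also contains $y_{i,j-1}$, we could remove $y_{i,j}$, contradicting minimality.

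Next I would verify that $C' := (C\setminus\{y_{i,j}\})\cup\{y_{i,j-1}\}$ is a vertex cover. For any edge $e$ of $H(I(\D)^{\pol})$, either $e\cap C$ contains a vertex other than $y_{i,j}$ (in which case $e$ meets $C'$ directly), or $e\cap C=\{y_{i,j}\}$, in which case the polarization fact forces $y_{i,j-1}\in e$, and $y_{i,j-1}\in C'$.

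The main obstacle is proving minimality of $C'$. I would argue by contradiction: suppose some $v\in C'$ makes $C'\setminus\{v\}$ a vertex cover. If $v=y_{i,j-1}$, then $C'\setminus\{v\}=C\setminus\{y_{i,j}\}$, contradicting minimality of $C$. Otherwise $v\in C\setminus\{y_{i,j}\}$, so minimality of $C$ yields an edge $e_0$ with $e_0\cap C=\{v\}$; in particular $y_{i,j}\notin e_0$. Because $C'\setminus\{v\}$ must cover $e_0$ and $(C\setminus\{v,y_{i,j}\})\cap e_0=\emptyset$, we conclude $y_{i,j-1}\in e_0$. Here I would use the hypothesis that $y_i$ is a leaf of $G$: the only edges of $\D$ involving $y_i$ are either $(x_i,y_i)$, whose polarization is $x_{i,1}y_{i,1}\cdots y_{i,\w(y_i)}$, or $(y_i,x_i)$, whose polarization contains only $y_{i,1}$. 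Since $j\geq 2$ forces $y_{i,j}$ to exist, we are in the first case, and then any edge containing $y_{i,j-1}$ also contains $y_{i,j}$, contradicting $y_{i,j}\notin e_0$.

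The $x$-case runs by the same three steps, using instead that any polarized edge containing $x_{i,k}$ with $k\geq 2$ comes from a generator of the form $m\cdot x_i^{\w(x_i)}$ (where $x_i$ is the head of a directed edge), and thus contains the full string $x_{i,1},\ldots,x_{i,\w(x_i)}$. The restriction $j\geq 3$ is essential precisely at the final contradiction: for $j-1\geq 2$, any edge containing $x_{i,j-1}$ must be of this ``full string'' type and hence also contains $x_{i,j}$. If one allowed $j=2$, the polarized edges arising from outgoing edges $(x_i,x_b)$, which contain $x_{i,1}$ but no other $x_{i,k}$, would break the structural fact and the argument would fail.
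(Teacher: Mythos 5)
Your proof is correct and follows essentially the same approach as the paper's: both rest on the polarization fact that edges containing $y_{i,j}$ also contain $y_{i,j-1}$ (to get the vertex-cover property), and both exploit the whisker structure --- that $y_i$ sits in a unique edge of $\D$, and that for $x_i$ any polarized edge through $x_{i,k}$ with $k\geq 2$ contains the full string $x_{i,1},\dots,x_{i,\w(x_i)}$ --- to get minimality. The paper's version is much terser (it just cites ``there is a unique edge of $\D$ containing $y_i$''), while you spell out the contradiction argument and carefully isolate why the $x$-case needs $j\geq 3$ rather than $j\geq 2$; the content is the same.
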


\begin{proof}
It can be seen from the polarization construction that if a minimal generator of $I(\D)^{\pol}$ contains $y_{i,j}$ then it also contains $y_{i, j-1}$. Thus, $C \setminus \{y_{i, j} \} \cup \{ y_{i, j-1} \}$ is also a vertex cover of $H(I(\D)^{\pol})$.  When $j \geq 2$, the minimality of $C \setminus \{y_{i, j} \} \cup \{ y_{i, j-1} \}$ comes immediately from that of $C$, since there is a unique edge of $\D$ containing $y_i$. The statement for $x_{i,j}$ follows in the same line of arguments as that for $y_{i,j}$.
\end{proof}

It is worth noting that the minimality of $C \setminus \{x_{i,2}\} \cup \{x_{i,1}\}$ in general may not be true. For instance, in Example~\ref{aug11-17}, $C=(x_{1,2}, x_{2,2}, y_2)$ is a minimal vertex cover of $H(I(\D)^{pol})$, but $C\setminus\{x_{1,2}\} \cup \{x_{1,1} \}$ is not minimal.

If $\D$ is known to be a Cohen-Macaulay oriented graph, we can push Lemma \ref{moveDown} slightly further.

\begin{corollary}\label{moveDownCM}
Let $\D$ be a Cohen-Macaulay weighted oriented graph with whiskers. Suppose that $C$ is a minimal vertex cover of $H(I(\D)^{\pol})$. If $x_{i,j}$ or $y_{i,j}$ is in $C$ for some $j \ge 2$ then $C \setminus \{x_{i,j}\} \cup \{x_{i,j-1}\}$ or $C \setminus \{y_{i, j} \} \cup \{ y_{i, j-1} \}$, respectively, is also a minimal vertex cover of $H(I(\D)^{\pol})$.
\end{corollary}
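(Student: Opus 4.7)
My plan is to reduce the corollary to a single new case beyond what Lemma \ref{moveDown} already gives, and then handle that case via unmixedness.

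The cases $y_{i,j} \in C$ with $j \ge 2$ and $x_{i,j} \in C$ with $j \ge 3$ are immediate from Lemma \ref{moveDown}, so the only new content is: if $x_{i,2} \in C$, then $C' := (C \setminus \{x_{i,2}\}) \cup \{x_{i,1}\}$ is again a minimal vertex cover of $H(I(\D)^{\pol})$. First I would check that $C'$ is a vertex cover, which is straightforward: by the polarization construction, every minimal generator of $I(\D)^{\pol}$ divisible by $x_{i,2}$ is also divisible by $x_{i,1}$, so every edge of $H(I(\D)^{\pol})$ covered by $x_{i,2}\in C$ is covered by $x_{i,1}\in C'$, while the remaining edges are covered by $C\cap C' = C \setminus \{x_{i,2}\}$.

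The substantive step is minimality of $C'$, and here I would invoke the Cohen-Macaulay hypothesis. Since $\D$ is Cohen-Macaulay, Theorem \ref{PCM} gives that $I(\D)^{\pol}$ is Cohen-Macaulay and hence unmixed. Because $I(\D)^{\pol}$ is squarefree, the quotient $R^{\pol}/I(\D)^{\pol}$ is reduced, so its associated primes coincide with its minimal primes; thus unmixedness says that every minimal vertex cover of $H(I(\D)^{\pol})$ has the same cardinality. By Lemma \ref{MVCPolarizedCM} that common cardinality equals $r$, so $|C| = r$ and therefore $|C'| = r$. If $C'$ were not minimal, it would properly contain a minimal vertex cover of size strictly less than $r$, contradicting unmixedness; hence $C'$ is minimal.

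There is essentially no hidden obstacle: the reason Lemma \ref{moveDown} excluded $j=2$ for $x$-variables is exactly the phenomenon illustrated by Example \ref{aug11-17}, where the swap $x_{i,2}\mapsto x_{i,1}$ produces a strictly smaller cover. The Cohen-Macaulay assumption rules this out precisely by forcing every vertex cover of cardinality $r$ to be minimal, and it is this cardinality-bookkeeping, rather than any edge-by-edge combinatorial check, that does the work.
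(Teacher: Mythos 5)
Your proof is correct and follows essentially the same approach as the paper: reduce to the $x_{i,2}$ case via Lemma \ref{moveDown}, verify the swap yields a cover using the nested structure of polarization, and conclude minimality from the fact (Lemma \ref{MVCPolarizedCM}) that every minimal vertex cover of $H(I(\D)^{\pol})$ has exactly $r$ elements. One small point worth making explicit, as the paper does, is that $x_{i,1}\notin C$ (otherwise $x_{i,2}$ would be redundant in $C$, contradicting its minimality), which is what guarantees $|C'|=|C|=r$ and makes your cardinality argument close.
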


\begin{proof}
We only need to prove the statements for $x_{i,2}$. Suppose that $x_{i,2} \in C$.  By the polarization construction, every minimal generator of $I(\D)^{\pol}$ that contains $x_{i,2}$ also contains $x_{i,1}$. Thus, $x_{i,1} \not\in C$ and replacing $x_{i,2}$ with $x_{i,1}$ results in a vertex cover of $H(I(\D)^{\pol})$.

Now, since $\D$ is Cohen-Macaulay, all minimal vertex covers of $H(I(\D)^{\pol})$ must have exactly $r$ elements (see Lemma~\ref{MVCPolarizedCM}). Hence, the replacement described does not change the minimality of a minimal vertex cover.
\end{proof}

\begin{corollary}\label{moveDownCM2}
Let $\D$ be a Cohen-Macaulay weighted oriented graph with whiskers. Suppose $C$ is a minimal vertex cover of $H(I(\D)^{\pol})$. If $y_{i,k} \in C$ for some $k \ge 1$ then $C \setminus \{y_{i,k}\} \cup \{x_{i,j}\}$ is also a minimal vertex cover of $H(I(\D)^{\pol})$ for any $j \ge 1$.
\end{corollary}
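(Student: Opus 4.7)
The plan is to verify directly that $C' := (C \setminus \{y_{i,k}\}) \cup \{x_{i,j}\}$ is a vertex cover of $H(I(\D)^{\pol})$ and then invoke the size-$r$ characterization of minimal vertex covers in the Cohen-Macaulay case. By Lemma \ref{MVCPolarizedCM}, the assumption $y_{i,k} \in C$ forces $|C| = r$ and ensures that $C$ contains no $x_{i,j'}$ for any $j'$; in particular $x_{i,j} \notin C$, so $|C'| = |C| = r$.

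To see that $C'$ still covers every edge of $H(I(\D)^{\pol})$, I would first exploit the fact that $y_i$ is a leaf of $G$: the only generator of $I(\D)$ involving $y_i$ is the one corresponding to the whisker $\{x_i,y_i\}$. Consequently, every edge of $H(I(\D)^{\pol})$ meeting any of $y_{i,1},\dots,y_{i,\w(y_i)}$ arises from the polarization of this whisker generator, and all edges not containing $y_{i,k}$ are still covered by $C \setminus \{y_{i,k}\} \subseteq C'$. So the task reduces to checking that $x_{i,j}$ lies in the polarized whisker edge.

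I would then split into two subcases according to the orientation of the whisker. If $(y_i,x_i)\in E(\D)$, the polarized whisker is $y_{i,1}x_{i,1}\cdots x_{i,\w(x_i)}$; here only $y_{i,1}$ occurs among the $y$-variables, so automatically $k=1$, and $x_{i,j}$ covers this edge for every $1\le j \le \w(x_i)$. If instead $(x_i,y_i)\in E(\D)$, then Theorem \ref{cm-weighted-orientes-graphs}(c) combined with the Cohen-Macaulay hypothesis forces $\w(x_i)=1$, so the only allowable index is $j=1$ and $x_{i,1}$ clearly covers the polarized whisker $x_{i,1}y_{i,1}\cdots y_{i,\w(y_i)}$. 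Either way, $C'$ is a vertex cover.

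Finally, since $C'$ is a vertex cover of size $r$ and every minimal vertex cover of $H(I(\D)^{\pol})$ has size exactly $r$ by Lemma \ref{MVCPolarizedCM}, any proper subset of $C'$ that were itself a vertex cover would yield a minimal vertex cover of size strictly less than $r$, contradicting the lemma. Hence $C'$ is minimal. The main conceptual step is spotting the right use of Theorem \ref{cm-weighted-orientes-graphs}(c) to rule out the potentially troublesome case where $(x_i,y_i)\in E(\D)$ and $\w(x_i)>1$; once that is out of the way, the rest is bookkeeping based on the leaf property of $y_i$.
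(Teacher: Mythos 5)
Your proof is correct and follows essentially the same approach as the paper's: show the modified set is a vertex cover by exploiting the leaf property of $y_i$ and Theorem \ref{cm-weighted-orientes-graphs}(c), then obtain minimality automatically from the size-$r$ characterization in Lemma \ref{MVCPolarizedCM}. The only cosmetic difference is that you split cases on the orientation of the whisker edge while the paper splits on whether $\w(x_i)=1$ or $\w(x_i)>1$, and these two case splits are equivalent via condition (c).
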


\begin{proof} As in Corollary \ref{moveDownCM}, the minimality of $C \setminus \{y_{i,k}\} \cup \{x_{i,j}\}$ if it is a vertex cover would follow from the Cohen-Macaulayness of $\D$. Thus, it remains to show that $C \setminus \{y_{i,k}\} \cup \{x_{i,j}\}$ is a vertex cover of $H(I(\D)^{\pol})$. That is, any edge of $H(I(\D)^{\pol})$ that contains $y_{i,k}$, for some $k \ge 1$ must also contain $x_{i,j}$ for all $j \ge 1$. Since $y_i$ is a leaf, this statement is clearly true if $\w(x_i) = 1$.

Consider the case where $\w(x_i) > 1$. Then by Theorem \ref{cm-weighted-orientes-graphs}, we must have $(x_i,y_i) \not\in E(\D)$ and $(y_i,x_i) \in E(\D)$. That is, the only generator of $I(\D)$ containing $y_i$ must be $y_ix_i^{\w(x_i)}$ (which also implies that the given $y_{i,k}$ must indeed be $y_{i,1}$). Clearly, $(y_ix_i^{\w(x_i)})^{\pol}$ contains $x_{i,j}$ for all $j \ge 1$.
\end{proof}

We are now ready to prove our next main theorem, which states that the equivalent conditions in Theorem \ref{cm-weighted-orientes-graphs} in fact imply that $I(\D)^{\pol}$ has dual linear quotients. Generally speaking, having dual linear quotients can be a powerful tool in showing that an ideal is Cohen-Macaulay by using  Theorem \ref{CMAD} and \ref{LinearQuotients}; however, $(I(\D)^{\pol})^{\vee}$ must be pure to follow this path. Since $(I(\D)^{\pol})^{\vee}$ being pure is trivially equivalent to $I(\D)^{\pol}$ being unmixed, which has already been shown to be equivalent to being Cohen-Macaulay in this setting, the implication of interest is that the condition on the vertex weights forces dual linear quotients.

\begin{theorem}\label{linear}
Let $\D$ be a weighted oriented graph and let $G$ be its underlying graph. Suppose that $G$ has a
perfect matching $\{x_1,y_1\},\dots,\{x_r,y_r\}$, where $y_i$'s are leaf vertices. If $\w(x_{\ell})=1$ for any edge $(x_{\ell},y_{\ell})$ of $\D$, then $I(\D)^{\pol}$ has dual linear quotients.
\end{theorem}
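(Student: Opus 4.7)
The plan is to make the minimal vertex covers of $H(I(\mathcal{D})^{\pol})$ into an ordered list whose corresponding generators of $(I(\mathcal{D})^{\pol})^{\vee}$ have linear quotients. The hypothesis is exactly condition (c) of Theorem~\ref{cm-weighted-orientes-graphs}, so $\mathcal{D}$ is Cohen--Macaulay; every minimal vertex cover of $H(I(\mathcal{D})^{\pol})$ then has exactly $r$ elements by Lemma~\ref{MVCPolarizedCM}, and the ``move-down'' Corollaries~\ref{moveDownCM} and~\ref{moveDownCM2} are at my disposal. I may further assume $\w(y_i)=1$ for every $i$, since $y_i$ is a leaf and is therefore a source or a sink (or else apply Lemma~\ref{FreeVar-depth}).

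For each $i$, set
\[
T_i = \begin{cases} \{x_{i,1},\,y_{i,1}\} & \text{if } (x_i,y_i) \in E(\mathcal{D}), \\ \{x_{i,1},\,\ldots,\,x_{i,\w(x_i)},\,y_{i,1}\} & \text{if } (y_i,x_i) \in E(\mathcal{D}). \end{cases}
\]
Lemmas~\ref{MVCPolarized} and~\ref{MVCPolarizedCM}, together with Corollary~\ref{CompareEntries}, show that every minimal vertex cover $C$ is uniquely described by a ``token function'' $\tau_C$ with $\tau_C(i)$ the unique element of $C \cap T_i$. I endow $T_i$ with the linear order $x_{i,1} <_i \cdots <_i x_{i,\w(x_i)} <_i y_{i,1}$ (the intermediate $x$-tokens being present only in the second case), and order the minimal vertex covers lexicographically: $C \prec C'$ iff at the smallest $i_0$ where $\tau_C$ and $\tau_{C'}$ disagree one has $\tau_C(i_0) <_{i_0} \tau_{C'}(i_0)$. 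I then enumerate the generators of $(I(\mathcal{D})^{\pol})^{\vee}$ in this $\prec$-order and claim that this listing gives linear quotients.

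To verify the claim, fix covers $C \prec C'$ and let $v := \tau_C(i_0)$, $u := \tau_{C'}(i_0)$, and $C'' := (C' \setminus \{u\}) \cup \{v\}$. Then $\tau_{C''}$ agrees with $\tau_{C'}$ outside $i_0$ and has $\tau_{C''}(i_0) = v <_{i_0} u$, so $C'' \prec C'$ directly from the definition. Provided $C''$ is again a minimal vertex cover, the monomial identity $v \cdot m_{C'} = u \cdot m_{C''}$ places the variable $v \in C \setminus C'$ in the colon ideal $((m_{C^{(1)}},\ldots,m_{C^{(t-1)}}) : m_{C'})$; varying $C$ over the predecessors of $C'$ exhibits this colon as variable-generated, which is linear quotients.

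The whole argument therefore hinges on minimality of $C''$, which I split by the form of $(v,u)$. If $v = x_{i_0,j}$ and $u = y_{i_0,1}$, Corollary~\ref{moveDownCM2} gives the conclusion directly. Otherwise $v$ and $u$ are both $x$-tokens at $i_0$ with $u = x_{i_0,j'}$ and $1 \le j < j'$, in particular $j' \ge 2$; the only minimal generators of $I(\mathcal{D})^{\pol}$ divisible by $u$ are then the polarizations of the ``in-edge'' terms $x_l x_{i_0}^{\w(x_{i_0})}$ and of $y_{i_0} x_{i_0}^{\w(x_{i_0})}$ (when present), each of which carries the whole block $x_{i_0,1} \cdots x_{i_0,\w(x_{i_0})}$ and is therefore also divisible by $v$. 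Consequently $C''$ is a vertex cover of size $r$, and by Lemma~\ref{MVCPolarizedCM} any such cover is automatically minimal under the Cohen--Macaulay hypothesis. The delicate sub-case is $(v,u) = (x_{i_0,1},\,x_{i_0,j'})$, where the index $i_0$ migrates from the part $C_2$ into $C_1$ in the decomposition of Lemma~\ref{MVCPolarized}; the size-$r$ argument is exactly what spares one from tracking this shift by hand, and is the pressure point where the Cohen--Macaulay hypothesis is doing real work.
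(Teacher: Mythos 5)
Your proposal is correct and follows essentially the same route as the paper: it orders the generators of $(I(\D)^{\pol})^\vee$ by the same lexicographic order (via your ``token'' reformulation of the decomposition in Lemma~\ref{MVCPolarized}/\ref{MVCPolarizedCM}), uses the Cohen--Macaulay hypothesis to force all minimal covers to have size $r$, and produces the required variable $v$ by a single substitution verified via Corollary~\ref{moveDownCM2} (for the $y$-to-$x$ case) and, in place of iterating Corollary~\ref{moveDownCM}, a direct polarization argument for the $x_{i_0,j'}\to x_{i_0,j}$ case. The only deviation is that minor shortcut; the logic and the pressure points (in particular, the size-$r$ argument absorbing the $C_2\to C_1$ migration) are the paper's.
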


\begin{proof} For simplicity of notation, let $J = I(\D)^{\pol}$ and let $H = H(J)$. Recall that the minimal generators of the Alexander dual $J^\vee$ are obtained from the minimal vertex covers of $H$. By Theorems \ref{PCM} and \ref{cm-weighted-orientes-graphs}, $J$ is Cohen-Macaulay and unmixed. Thus, all minimal vertex covers of $H$ have exactly $r$ elements and are of the form described in Lemma~\ref{MVCPolarizedCM}.

To show that $J$ has dual linear quotients, that is, $J^\vee$ has linear quotients, we shall order the minimal generators $M_1, \dots, M_s$ of $J^\vee$ lexicographically, where
\begin{align*}
x_{1,1} & >x_{1,2}> \ldots >x_{1,\w(x_{1})}>y_{1,1}>y_{1,2}> \ldots >y_{1,\w(y_{1})}>x_{2,1}>x_{2,2}>\ldots >x_{2,\w(x_{2})}\\
 & >y_{2,1}> \ldots >y_{2,\w(y_{2})}> \ldots >x_{r,1}>x_{r,2}> \ldots >x_{r,\w(x_{r})}>y_{r,1} > \ldots > y_{r,\w(y_{r})}.
\end{align*}
Observe that by Lemma~\ref{MVCPolarized}, for any $i$, each minimal generator of $J^\vee$ is divisible by $x_{i, j}$ or $y_{i, j}$ for at most one $j$. Thus, $M_1 = \prod_{i=1}^r x_{i,1}$ is the first generator under this ordering.

For each generator $M$ of $J^{\vee}$, order the variables dividing $M$ as above. By Lemma~\ref{MVCPolarizedCM}, $M$ has the form
\begin{equation}\label{product}
M=a_1a_2\cdots a_r, 
\end{equation}
where for each $i$, $a_i=x_{i,j_i}$ for some $j_i\geq 1$ or $a_i=y_{i,k_i}$ for some $k_i \geq 1$.
Consider an arbitrary $t \le s$ (where $s$ is the number of monomial generators of $J$) and any $1 \le u <t$. Using the format of (\ref{product}), write $M_t=a_1a_2\cdots a_r$ and $M_u=b_1b_2\cdots b_r$.  Let $i_u$ be the least integer such that $a_{i_u}\not= b_{i_u}$. By the chosen ordering, one of the following must occur:
\begin{enumerate}
\item $a_{i_u}=x_{i,j_i}$ and $b_{i_u}=x_{i,j}$ for some $j < j_i$;
\item $a_{i_u}=y_{i,k_i}$ and $b_{i_u}=y_{i,k}$ for some $k < k_i$;
\item $a_{i_u}=y_{i,k_i}$ and $b_{i_u}=x_{i,j}$ for some $j$.
\end{enumerate}
Thus $M_u : M_t$ is a multiple of  $b_{i_u}$. Moreover, by applying Corollaries \ref{moveDownCM} and \ref{moveDownCM2}, $b_{i_u}M_t/a_{i_u}$ is a minimal generator of $J^\vee$ and $b_{i_u}M_t/a_{i_u} < M_t$ in our ordering, with $b_{i_u}M_t/a_{i_u}:M_t=b_{i_u}$. Hence, it follows that $(M_1, \dots, M_{t-1}):M_t$ is generated by a subset of the variables. Thus $J^{\vee}=(I(\D)^{pol})^{\vee}$ has linear quotients.
\end{proof}

\begin{corollary}
With the assumptions of Theorem \ref{linear}, the Stanley-Reisner simplicial complex of $I(\D)^{pol}$ is shellable and the Alexander dual of $I(\D)^{\pol}$ has a linear free resolution.
\end{corollary}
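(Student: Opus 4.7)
The plan is to derive both assertions as essentially immediate consequences of Theorem \ref{linear} together with standard facts relating squarefree monomial ideals to their Stanley--Reisner complexes and Alexander duals. First I would observe that the hypothesis of Theorem \ref{linear} is exactly condition (c) of Theorem \ref{cm-weighted-orientes-graphs}, so that $\D$ is Cohen--Macaulay; Theorem \ref{PCM} then yields that $I(\D)^{\pol}$ is also Cohen--Macaulay, hence unmixed. By Lemma \ref{MVCPolarizedCM}, every minimal vertex cover of $H(I(\D)^{\pol})$ has cardinality exactly $r$, so the minimal generators of $(I(\D)^{\pol})^\vee$ all sit in the common degree $r$.

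For the linear free resolution I would combine Theorem \ref{linear} with Theorem \ref{LinearQuotients}, which upgrades linear quotients of $(I(\D)^{\pol})^\vee$ to componentwise linearity. Because all of its minimal generators lie in the single degree $r$, componentwise linearity collapses to the statement that $(I(\D)^{\pol})^\vee$ admits a linear free resolution.

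For the shellability I would invoke the standard dictionary: the facets of the Stanley--Reisner complex $\Delta$ of $I(\D)^{\pol}$ are exactly the complements of the minimal primes of $I(\D)^{\pol}$, and these correspond bijectively to the minimal generators of $(I(\D)^{\pol})^\vee$ via $C\leftrightarrow \prod_{x_{i,j}\in C}x_{i,j}$. Under this correspondence a linear quotients ordering of $(I(\D)^{\pol})^\vee$ transports verbatim to a shelling of $\Delta$; this is a classical theorem of Herzog--Hibi (see, e.g., \cite[Chapter 8]{Herzog-Hibi-book}). The explicit lexicographic ordering of minimal generators constructed in the proof of Theorem \ref{linear} therefore produces an (automatically pure, by the degree-$r$ observation above) shelling order on the Stanley--Reisner complex of $I(\D)^{\pol}$. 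There is essentially no obstacle beyond this bookkeeping: the combinatorial heavy lifting has already been carried out in Theorem \ref{linear}, and the only point one must verify is that the uniform-degree hypothesis needed to pass from componentwise linearity to a genuine linear resolution is in force, which is precisely what the unmixedness of $I(\D)^{\pol}$ supplies.
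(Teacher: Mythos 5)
Your argument is correct and is exactly the implicit reasoning behind the paper's unstated proof: Theorem \ref{cm-weighted-orientes-graphs} gives unmixedness of $I(\D)^{\pol}$ (so the Alexander dual is generated in the single degree $r$), Theorem \ref{linear} gives linear quotients of the dual, and the standard dictionary (Herzog--Hibi, \cite[Chapter~8]{Herzog-Hibi-book}) then yields both the shellability of the Stanley--Reisner complex and the linear resolution of the dual. The only cosmetic difference is that you route through componentwise linearity via Theorem \ref{LinearQuotients} and then collapse to a linear resolution using pure degree, whereas one could cite directly the fact that an equigenerated ideal with linear quotients has a linear resolution; both are standard and equivalent here.
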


An important application of our main results is the following statement, which is inspired by the main theorem of \cite{V}, which showed that the edge ideal of the graph obtained by adding a whisker to every vertex of an arbitrary graph is always Cohen-Macaulay.

\begin{corollary} Let $G'$ be an arbitrary simple graph on the vertex set $\{x_1, \dots, x_r\}$, and let $G$ be obtained by adding whiskers $\{x_1, y_1\}, \dots, \{x_r,y_r\}$ to the vertices of $G'$. Let $\D$ be a weighted oriented graph with $G$ as its underlying graph. Then the following are equivalent:
\begin{enumerate}
\item[(a)] $\D$ is a Cohen-Macaulay weighted oriented graph;
\item[(b)] $I(\D)$ is unmixed; that is, all its associated primes have the same height;
\item[(c)] $\w(x_s) = 1$ for any edge $(x_s,y_s)$ in $\D$.
\end{enumerate}
Moreover, any of the above conditions implies
\begin{enumerate}
\item[(d)] The polarization $I(\D)^{\pol}$ has dual linear quotients.
\end{enumerate}
\end{corollary}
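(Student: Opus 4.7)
The plan is to observe that this corollary is an immediate specialization of Theorems \ref{cm-weighted-orientes-graphs} and \ref{linear}. The whiskering construction automatically produces the combinatorial hypothesis required by those theorems, so no new argument is needed beyond verifying that hypothesis.

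First I would check that the matching $\{x_1, y_1\}, \ldots, \{x_r, y_r\}$ is indeed a perfect matching of $G$ with each $y_i$ a leaf. By definition, $V(G) = \{x_1,\ldots,x_r\} \cup \{y_1,\ldots,y_r\}$ and $E(G) = E(G') \cup \{\{x_1,y_1\},\ldots,\{x_r,y_r\}\}$. The edges $\{x_i,y_i\}$ are pairwise vertex-disjoint and their union covers every vertex of $G$, so they form a perfect matching. Moreover, each $y_i$ appears only in the whisker edge $\{x_i,y_i\}$, so $y_i$ is a leaf of $G$.

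Next I would invoke the theorems directly. Since $\D$ is a weighted oriented graph whose underlying graph $G$ satisfies the hypothesis of Theorem \ref{cm-weighted-orientes-graphs}, the equivalence (a) $\Leftrightarrow$ (b) $\Leftrightarrow$ (c) is immediate. Similarly, (c) $\Rightarrow$ (d) is precisely the conclusion of Theorem \ref{linear} under the same hypothesis.

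There is essentially no obstacle here; the corollary is a clean reformulation that emphasizes the case of whiskered graphs, extending the classical Cohen-Macaulayness result of Villarreal \cite{V} to the weighted oriented setting. The entire proof reduces to the one-line verification that adding a whisker at every vertex of $G'$ produces a perfect matching of leaves in $G$, after which Theorems \ref{cm-weighted-orientes-graphs} and \ref{linear} apply verbatim.
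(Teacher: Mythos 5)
Your proposal is correct and is exactly the intended argument: the paper presents this corollary as an immediate specialization of Theorems \ref{cm-weighted-orientes-graphs} and \ref{linear}, with no separate proof beyond noting that adding a whisker at every vertex of $G'$ produces a perfect matching $\{x_1,y_1\},\dots,\{x_r,y_r\}$ in which each $y_i$ is a leaf. Your verification of that hypothesis and the direct invocation of the two theorems is precisely what is required.
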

As before, if $I(\D)^{\vee}$ is pure and has linear quotients, then condition $(d)$ becomes equivalent to $(a),(b),$ and $(c)$.


\section{Non-Cohen-Macaulay oriented graphs} \label{NCM-section}

Our results in Section \ref{whisker-digraphs-section} show that $\D$ is Cohen-Macaulay if and only if $\w(x_s) = 1$ for any edge $(x_s,y_s) \in E(\D)$. When the later condition fails, we of course do not expect $\D$ to be Cohen-Macaulay. In this section, we shall prove that if the condition of Theorem~\ref{cm-weighted-orientes-graphs}(c) only fails at one vertex $x_s$, then $\D$ is sequentially Cohen-Macaulay and $I(\D)^{\pol}$ has dual linear quotients.

We begin with a lemma which allows us to describe the minimal generators of the Alexander dual of $I(\D)^{\pol}$ when the condition of Theorem~\ref{cm-weighted-orientes-graphs}(c) fails at only one vertex $x_s$.

\begin{lemma} \label{MVCPolarizedNotCM}
Let $\D$ be a weighted oriented graph, and assume that its underlying graph $G$ has a perfect matching $\{x_1,y_1\}, \dots, \{x_r,y_r\}$, where $y_i$'s are leaf vertices in $G$. Suppose also that $(x_1,y_1) \in E(\D)$ with $\w(x_1) > 1$, while $\w(x_i) = 1$ for all $i \ge 2$ such that $(x_i,y_i) \in E(\D)$. Let $C$ be a minimal vertex cover of $H(I(\D)^{\pol})$ and assume that $C = C_1 \cupdot C_2 \cupdot C_3$ as in Lemma \ref{MVCPolarized}. Then the following statements hold:
\begin{enumerate}
\item If $x_{i,1} \not\in C_1$ then each of $C_2$ and $C_3$ contains at most one of the $x_{i,j}$'s and $y_{i,j}$'s, respectively. Moreover, if $i \not= 1$ then either $x_{i,j} \in C_2$ for some $2 \le j \le \w(x_i)$ or $y_{i,k} \in C_3$ for some $1 \le k \le \w(y_i)$, but not both.
\item $|C| = r \text{ or } r+1$. Furthermore, $|C| = r+1$ if and only if there exists an $x_s$ (with $s \not= 1$) such that $(x_s,x_1) \in E(\D)$ and $\{x_{1,1}, x_{s,1}\} \not\subseteq C_1$. In this case, $C_2$ must contain $x_{1,j}$ for some $2 \le j \le \w(x_1)$ and $C_3$ must contain  $y_{1,k}$ for some $1 \le k \le \w(y_1)$.
\end{enumerate}
\end{lemma}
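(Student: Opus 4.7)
The plan is to leverage Lemma~\ref{MVCPolarized} and track carefully how the edge $(x_1,y_1)$ with $\w(x_1) > 1$ differs from the other whisker edges. For Part~(1), the claim that each of $C_2$ and $C_3$ contains at most one variable indexed by $i$ whenever $x_{i,1} \notin C_1$ is immediate from Lemma~\ref{MVCPolarized}. For the remaining assertion at $i \ne 1$, I would split according to the orientation of the matching edge. If $(x_i,y_i) \in E(\D)$, the hypothesis forces $\w(x_i)=1$, so no $x_{i,j}$ with $j \ge 2$ exists, and since the polarized edge $(x_i y_i)^{\pol} = x_{i,1}y_{i,1}$ must be covered, $y_{i,1} \in C_3$. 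If instead $(y_i,x_i) \in E(\D)$, the only polarized edge containing $y_{i,1}$ is $(y_i x_i^{\w(x_i)})^{\pol} = y_{i,1}x_{i,1}\cdots x_{i,\w(x_i)}$; any $x_{i,j} \in C_2$ already covers this edge, so having both $y_{i,1}$ and some $x_{i,j}$ would contradict the minimality of $C$.

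For Part~(2), I would first establish the bounds on $|C|$. The lower bound $|C| \ge r$ follows because the polarized whisker edges attached to distinct matching pairs are supported on disjoint sets of variables, so each pair contributes at least one element of $C$. For the upper bound, Part~(1) shows that every index $i \ne 1$ contributes exactly one variable to $C$, while Lemma~\ref{MVCPolarized} caps the contribution at $i = 1$ at two. Moreover, if $x_{1,1} \in C_1$ then, since $y_{1,1}$ lies only in the edge $x_{1,1}y_{1,1}$ and each $x_{1,j}$ with $j \ge 2$ lies only in edges of the form $x_{l,1}x_{1,1}\cdots x_{1,\w(x_1)}$, all of which are already covered by $x_{1,1}$, every other variable indexed by $1$ becomes redundant. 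Hence $|C| = r+1$ can hold only when $x_{1,1} \notin C_1$ and both some $x_{1,j}$ with $j \ge 2$ and $y_{1,1}$ lie in $C$.

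The heart of the argument is the sharp characterization of when $|C|=r+1$ occurs. Given $x_{1,1} \notin C_1$, the polarized edge $x_{1,1}y_{1,1}$ can only be covered by $y_{1,1}$, placing $y_{1,1} \in C_3$ automatically. The presence of some $x_{1,j}$ with $j \ge 2$ in $C_2$, required by minimality to account for the extra element, must then be justified by an edge of $H(I(\D)^{\pol})$ that this variable uniquely covers; the only candidates are the polarized edges $(x_s x_1^{\w(x_1)})^{\pol} = x_{s,1}x_{1,1}\cdots x_{1,\w(x_1)}$ for $(x_s,x_1) \in E(\D)$, and unique coverage forces $x_{s,1} \notin C_1$ together with $x_{1,1} \notin C_1$, yielding the condition $\{x_{1,1},x_{s,1}\} \not\subseteq C_1$. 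This gives the forward direction of the iff. Conversely, when such an $x_s$ exists and $x_{1,1} \notin C_1$, minimality forces some $x_{1,j}$ into $C_2$ to cover the above edge and $y_{1,1}$ into $C_3$ to cover $x_{1,1}y_{1,1}$, producing $|C| = r+1$.

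The main obstacle I expect is bookkeeping, namely keeping straight which polarized variables cover which edges of $H(I(\D)^{\pol})$. The crucial asymmetry is that at $i = 1$ the polarized whisker edge $x_{1,1}y_{1,1}$ is short and does not involve any of $x_{1,2},\ldots,x_{1,\w(x_1)}$, whereas at $i \ne 1$ the polarized whisker edge $y_{i,1}x_{i,1}\cdots x_{i,\w(x_i)}$ sweeps up all of the $x_{i,j}$'s. This asymmetry, arising because $(x_1,y_1)$ is directed out of $x_1$ (so $x_1$ appears with exponent $1$ in that generator even though $\w(x_1) > 1$ is realized through the edges $x_sx_1^{\w(x_1)}$), is precisely what permits the contribution at $i = 1$ to exceed one while contributions at $i \ne 1$ are pinned at one.
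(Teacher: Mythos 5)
Your argument is correct, and it amounts to a more self-contained packaging of the paper's. The paper handles Part (1) by citing the proof of Lemma~\ref{MVCPolarizedCM} applied to the induced subgraph $\D'$ on $\{x_2,\dots,x_r,y_2,\dots,y_r\}$, and handles Part (2) by first showing that $C'$, the set of elements of $C$ indexed by $i\geq 2$, is a minimal vertex cover of $H(I(\D')^{\pol})$ of size $r-1$ (invoking the unmixedness of $\D'$ to get minimality of $C'$) and then reconstructing $C$ as $C'$ together with one or two variables indexed by $1$. Your proof reaches the same case analysis without detouring through $\D'$ or its Cohen--Macaulayness, by arguing directly on the whisker edges and the minimality of $C$; this is slightly more economical, since the unmixedness of $\D'$ plays no essential role in the count. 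Two small precision points worth fixing: (i) you assert that Part (1) gives ``exactly one'' variable indexed by $i$ for each $i\geq 2$, but Part (1) as stated only treats the case $x_{i,1}\notin C_1$ --- the complementary case $x_{i,1}\in C_1$ also contributes exactly one, but that requires the separate (easy) remark that minimality of $C$ then excludes every other $i$-indexed polarizing variable; (ii) since the lemma's statement allows $\w(y_i)>1$, you should write $y_{i,k}$ and $(x_iy_i^{\w(y_i)})^{\pol}=x_{i,1}y_{i,1}\cdots y_{i,\w(y_i)}$ in place of $y_{i,1}$ and $x_{i,1}y_{i,1}$; the argument is unaffected because $y_i$ is a leaf, so the whisker is still the unique edge meeting the $y_{i,k}$'s.
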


\begin{proof} The first statement of (1) follows the same line of arguments as in Lemma \ref{MVCPolarizedCM}. To prove the second statement of (1), let $\D'$ be the induced oriented subgraph of $\D$ over the vertex set $\{x_2, \dots, x_r, y_2, \dots, y_r\}$. Then $\D'$ satisfies the condition of Theorem~\ref{cm-weighted-orientes-graphs}(c). Thus, $\D'$ is Cohen-Macaulay and the assertion again follows from the proof of Lemma \ref{MVCPolarizedCM}.

We shall now prove (2). Let $C' = C \cap \{x_2, \dots, x_r, y_2, \dots, y_r\}$ and let $\D'$ be as above. It is clear that $C'$ is a vertex cover of $\D'$. Since $C$ is a minimal vertex cover of $\D$ and there are whiskers at $x_i$'s, we must have either $x_{i,k}\in C$ for some $1\leq k \leq \w(x_i)$ or $y_{i,k'} \in C$ for some $1\leq k' \leq \w(y_i)$. By statement (1), for $i>1$, either $x_{i,k} \in C' $ for some $1\leq k \leq \w(x_i)$ or $y_{i,k'} \in C'$ for some $1\leq k' \leq \w(y_i)$ but not both. Hence $|C'| = r-1$. By applying Theorem~\ref{cm-weighted-orientes-graphs} and Lemma \ref{MVCPolarizedCM} to $\D'$, noting that all the minimal vertex covers of $\D'$ have size $r-1$, we deduce that $C'$ is a minimal vertex cover of $\D'$.

Observe that $C' \cup \{x_{1,1}\}$ is a vertex cover of $H(I(\D)^{\pol})$. Thus, if $x_{1,1} \in C$ then we must have $C = C' \cup \{x_{1,1}\}$, whence $|C| = r$.

Suppose now that $x_{1,1} \not\in C$. Notice that if $(x_1,x_s) \in E(\D)$ and $\w(x_s) > 1$, for some $s \ge 2$, then the hypothesis forces $(y_s,x_s) \in E(\D)$. This implies that if  $x_{s,1}\not\in C$ then  $x_{s,j} \in C$ for some $2 \le j \le \w(x_s)$ and  $y_{s,k} \not\in C$ for any $1 \le k \le \w(y_s)$. Moreover, if for any edge $(x_s,x_1) \in E(\D)$, where $s \ge 2$, we always have $x_{s,1} \in C$, then since $C$ must contain $y_{1,k}$ for some $1 \le k \le \w(y_1)$ and $C' \cup \{y_{1,k}\}$ is a vertex cover of $H(I(\D)^{\pol})$, we have $C = C' \cup \{y_{1,k}\}$, and again, $|C| = r$.

It remains to consider the case where there exists an $x_s$ ($s \not= 1$) such that $(x_s,x_1) \in E(\D)$ and $\{x_{1,1}, x_{s,1}\} \not\subseteq C$. In this case, in order to cover $(x_sx_1^{\w(x_1)})^{\pol}$, $C$ must contain $x_{1,j}$ for some $2 \le j \le \w(x_1)$, and in order to cover $(x_1y_1^{\w(y_1)})^{\pol}$, $C$ must contain $y_{1,k}$ for some $1 \le k \le \w(y_1)$. Furthermore, for such $j$ and $k$, $C' \cup \{x_{1,j}, y_{1,k}\}$ is a vertex cover of $H(I(\D)^{\pol})$. Thus, $C = C' \cup \{x_{1,j}, y_{1,k}\}$ for some $2 \le j \le \w(x_1)$ and $1 \le k \le \w(y_1)$ and $|C| = r+1$.  \end{proof}

\begin{remark}\label{Form-of-covers}
With the statement of Lemma \ref{MVCPolarizedNotCM}, we know a minimal vertex
cover of $H(I(\mathcal{D})^{pol})$ has size either $r$ or $r+1$.
When it has size $r$ then for each $i$, we have exactly one of the
following happens: $x_{i,1}\in C_{1}$; $x_{i,j}\in C_{2}$ for $1<j<\omega(x_{i})$;
$y_{i,k}\in C_{3}$ for $1\leq k\leq\omega(y_{i})$. Moreover when
$x_{1,1}\notin C_{1}$ and the size of $C$ is $r$, then for
all $1<s\leq r$ if $(x_{s},x_{1})\in E(\mathcal{D})$ then $x_{s,1}\in C_{1}$.
If the size of $C$ is $r+1$, then we must have $x_{1,1}\notin C_{1}$
and for some $1<s\leq r$, $(x_{s},x_{1})\in E(\mathcal{D})$ and
$x_{s,1}\notin C_{1}$. See also Remark~\ref{AssGeneralForm}.
\end{remark}

We are now ready to present our next main result, which shows that if condition $(c)$ of Theorem~\ref{cm-weighted-orientes-graphs} is violated at a single vertex, then the resulting ideal, while no longer Cohen-Macaulay, remains sequentially Cohen-Macaulay.

\begin{theorem}\label{Clinear}
Let $\D$ be a weighted oriented graph, and assume that its underlying graph $G$ has a perfect matching $\{x_1,y_1\}, \dots, \{x_r,y_r\}$, where $y_i$'s are leaf vertices in $G$. Suppose also that $(x_1,y_1) \in E(\D)$ with $\w(x_1) > 1$, while $\w(x_s) = 1$ for all $s \ge 2$ such that $(x_s,y_s) \in E(\D)$. Then $I(\D)^{\pol}$ has dual linear quotients. In particular, $\D$ is sequentially Cohen-Macaulay.
\end{theorem}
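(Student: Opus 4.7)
The plan is to emulate the strategy of Theorem \ref{linear}, but with an ordering of the minimal generators of $J^\vee := (I(\D)^{\pol})^\vee$ tailored to the mixed degrees ($r$ and $r+1$) forced by Lemma \ref{MVCPolarizedNotCM}. Let $\D'$ be the induced oriented subgraph of $\D$ on $V(\D)\setminus\{x_1,y_1\}$. Since $\D'$ satisfies condition (c) of Theorem \ref{cm-weighted-orientes-graphs}, Theorem \ref{linear} produces a linear quotients ordering $N_1>N_2>\cdots>N_s$ for the generators of $(I(\D')^{\pol})^\vee$. Lemma \ref{MVCPolarizedNotCM} then classifies the generators of $J^\vee$ according to their index-$1$ factor: type (a) $x_{1,1}N_i$ (valid for every $N_i$); type (b) $y_{1,k}N_i$ with $1\le k\le \w(y_1)$, valid precisely when $x_{s,1}\in N_i$ for every edge $(x_s,x_1)\in E(\D)$, in which case I call $N_i$ \emph{compatible}; and type (c) $x_{1,j}y_{1,k}N_i$ with $2\le j\le\w(x_1)$ and $1\le k\le\w(y_1)$, valid precisely when $N_i$ is not compatible.

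I would then arrange the generators of $J^\vee$ in blocks indexed by the $N_i$'s, following the $(I(\D')^{\pol})^\vee$-order. Inside the block for $N_i$, I would list first the type (a) generator $x_{1,1}N_i$, and then either the type (b) generators in order of increasing $k$ or the type (c) generators in lexicographic $(j,k)$-order (whichever alternative is available, depending on whether $N_i$ is compatible). Let $p_t$ denote the index-$1$ factor of $M_t$, so $M_t=p_t\cdot N_{i(t)}$.

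To check linear quotients for this ordering I need, for each $M_t$ and each earlier $M_u$, that some variable in $M_u\setminus M_t$ lies in the colon $(M_1,\dots,M_{t-1}):M_t$. Attackers coming from the index-$1$ factor are handled by ``one-variable'' replacements within the $N_{i(t)}$-block: $x_{1,1}$ is in the colon via the type (a) generator $x_{1,1}N_{i(t)}$; $y_{1,k'}$ with $k'<k$ via $y_{1,k'}N_{i(t)}$ in type (b); and similarly for $x_{1,j'}$ with $j'<j$ or $y_{1,k'}$ with $k'<k$ in type (c). For an attacker $b''\in N_{i'}\setminus N_{i(t)}$ coming from a prior block ($i'<i(t)$), I would invoke the linear quotients of $(I(\D')^{\pol})^\vee$ to obtain an $N_{i''}$ with $i''<i(t)$ and $N_{i''}:N_{i(t)}=b''$, and then lift $N_{i''}$ to $J^\vee$ by attaching the same index-$1$ factor $p_t$ as $M_t$. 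A direct computation then shows that this lift has colon with $M_t$ equal to the single variable $b''$, provided the lift is a valid generator of $J^\vee$; this validity question is the only delicate point.

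The main obstacle is the following \emph{compatibility preservation} observation, which guarantees that the needed lift always exists. In the lex order used in Theorem \ref{linear}, if $N_{i(t)}$ is compatible and $N_{i''}$ is a swap target from a prior block, then the unique index at which $N_{i''}$ and $N_{i(t)}$ differ cannot be a compatibility-critical index: at any index $s$ with $(x_s,x_1)\in E(\D)$, the compatible $N_{i(t)}$ already carries the lex-maximum variable $x_{s,1}$, so a ``larger'' lex choice at $s$ is impossible. Hence $N_{i''}$ remains compatible and $y_{1,k}N_{i''}$ is a valid prior type (b) generator with colon $b''$, handling $M_t$ of type (b). For $M_t$ of type (c), a straightforward case-split on whether $N_{i''}$ is compatible or not supplies either a type (b) or type (c) prior generator whose colon with $M_t$ is again the single variable $b''$; the type (a) case is analogous and easier, since $x_{1,1}N_{i''}$ is always a valid generator. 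Once linear quotients of $J^\vee$ are established, Theorem \ref{LinearQuotients} yields componentwise linearity, Theorem \ref{CMAD} gives sequential Cohen--Macaulayness of $R^{\pol}/I(\D)^{\pol}$, and Theorem \ref{PCM} transfers this to $R/I(\D)$, completing the proof.
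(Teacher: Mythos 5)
Your block-by-$N_i$ ordering is genuinely different from the paper's, which lists all degree-$r$ generators of $J^\vee$ first (lexicographically) and then all degree-$(r+1)$ generators (lexicographically), so if the details worked out you would have an alternative route. However, the classification of type (b) and type (c) generators you extract from Lemma~\ref{MVCPolarizedNotCM} is incorrect, and the proposed ordering would therefore list monomials that are not minimal generators of $J^\vee$ at all. Containing $x_{s,1}$ for every incoming edge $(x_s,x_1)\in E(\D)$ is necessary for $y_{1,k}N_i$ to be a minimal generator, but it is not sufficient: $N_i$ must additionally contain some $x_{s,j}$ --- rather than a $y_{s,\ell}$ --- at every index $s$ with an outgoing edge $(x_1,x_s)\in E(\D)$, since otherwise the polarized edge $x_{1,1}x_{s,1}\cdots x_{s,\w(x_s)}$ is uncovered. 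The same second condition is needed for type (c), and an $N_i$ failing it contributes only the type (a) generator $x_{1,1}N_i$ to $J^\vee$. Concretely, take $\D$ on $\{x_1,x_2,x_3,y_1,y_2,y_3\}$ with $E(\D)=\{(x_1,y_1),(x_3,x_1),(x_1,x_2),(y_2,x_2),(x_2,x_3),(x_3,y_3)\}$, $\w(x_1)=\w(x_2)=2$ and all other weights $1$. Here $N_i=y_{2,1}x_{3,1}$ is a minimal cover of $\D'$-polarization that is ``compatible'' by your criterion, yet $y_{1,1}N_i=y_{1,1}y_{2,1}x_{3,1}$ fails to cover the polarized edge $x_{1,1}x_{2,1}x_{2,2}$ of $(x_1,x_2)$ and so is not a generator of $J^\vee$.

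The good news is that the repair is of the same shape as your compatibility-preservation observation: because the lex order at index $s$ is $x_{s,1}>\cdots>x_{s,\w(x_s)}>y_{s,1}>\cdots$, a lex-earlier one-variable swap of a cover carrying an $x$-variable at index $s$ still carries an $x$-variable there. Hence the second condition is preserved by the swap targets just as compatibility is, the lifts you need remain valid generators, and your block ordering (with the corrected biconditional) does give a linear quotients ordering. The paper's proof, for its degree-first ordering, also relies implicitly on preservation of this second condition when it asserts that $y_{1,k}M''$ is a minimal generator for the relevant $M''$, but it phrases the condition as a one-sided implication; turning it into a ``valid precisely when'' statement as you do makes the omission into an outright false claim, which is why this is more than a cosmetic gap in your write-up.
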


\begin{proof} The second statement follows from the first statement and Theorems \ref{PCM}, \ref{CMAD} and \ref{LinearQuotients}.

We now show that $I(\D)^{\pol}$ has dual linear quotients. We shall order the generators of $I(\D)^{\pol}$ by listing degree $r$ generators first and then degree $(r+1)$ generators. Among generators of the same degrees, we shall use the same lexicographic ordering as that given in the proof of Theorem~\ref{linear}. Suppose that $M_1, \dots, M_v$ are the minimal generators of $I(\D)^{\pol}$ given in this ordering, in which, for some $u$ with $u < v$, $M_1, \dots, M_u$ are of degree $r$ and $M_{u+1}, \dots, M_v$ are of degree $(r+1)$.

Let $\D'$ be the induced oriented subgraph of $\D$ on the vertex set $\{x_2, \dots, x_r, y_2, \dots, y_r\}$ and consider an arbitrary generator $M_t$ for $1 \le t \le v$. Without loss of generality, we may also assume that for some fixed $w,p$ with $1 \le w \le p \le r$, $(x_s,x_1) \in E(\D)$ for all $2 \le s \le w$, $(x_1, x_s) \in E(\D)$ for all $w+1 \le s \le p$ and for $s>p$, $\{x_1, x_s\}\not\in E(G)$.

\noindent{\bf Case 1: $t \le u$.} In this case, $\deg(M_t) = r$ and, by Remark~\ref{Form-of-covers}, $M_{t}=x_{1,1}M'$ or $M_{t}=y_{1,k}M'$, where $M'$ is a minimal generator of $I(\D')^{\pol}$. Notice that $M_t$ cannot be of form $M_{t}=x_{1,k}M'$ for $k>1$, otherwise the edge $(x_1,y_1)\in E(\D)$ is not covered by the corresponding minimal vertex cover.  If $M_{t}=x_{1,1}M'$ then by our ordering, $M_{i}$ must contain
$x_{1,1}$ for all $i<t$. That is $M_{i}=x_{1,1}N_{i}$ where $N_{i}$
is a minimal generators of $I(\D')^{\pol}$ that comes
before $M'$ in the corresponding ordering. Thus, it follows from the
proof of Theorem \ref{linear} for $\D'$ that $(M_{1},....,M_{t-1}):M_{t}$
is generated by a subset of the variables.

Suppose that $M_{t}=y_{1,k}M'$. Then, by Remark~\ref{Form-of-covers}, this is the case only if $x_{s,1} \big| M'$ for any $2 \le s \le w$ (i.e., when $(x_s,x_1) \in E(\D)$). Consider any $i < t$. By our ordering, $\deg(M_i) = r$ and, thus, $M_{i}$ is either $x_{1,1}M''$, where $M''$ is a minimal generator of $I(\D')^{\pol}$, or $M_{i}=y_{1,k'}M''$, where $k' \le k$ and $M''$ is a minimal generator of $I(\D')^{\pol}$. This implies that $M_{i}:M_{t}$ is contained in the ideal generated by $x_{1,1},y_{1,k'}$ and $M'':M'$. It is easy to see that $x_{1,1}M'$ and $y_{1,k'}M'$( if $k'<k$) are minimal
generators of $I(\D)^{\pol}$, which come before $M_{t}$ in our ordering. Moreover, if $k'=k$ then $M'' > M'$ in the ordering of $I(\D')^{\pol}$ and, conversely, for any such minimal generator $M'' > M'$ of $I(\D')^{\pol}$, the assumption on $M'$ forces $x_{s,1} \big| M''$ for all $2 \le s \le w$, whence $y_{1,k}M''$ is also a minimal generator of $I(\D)^{\pol}$ which comes before $M_t$. Hence, together with the conclusion of Theorem \ref{linear} for $\D'$, we have that $(M_{1},...,M_{t-1}):M_{t}$ is generated by a subset of the variables.

\noindent{\bf Case 2: $t > u$.} In this case, $\deg(M_{t})=r+1$ and, by Lemma \ref{MVCPolarizedNotCM},
$M_{t}=x_{1,j}y_{1,k}M'$, where $2\le j \le \omega(x_{1})$, $1\le k\le \omega(y_{1})$, and $M'$ is a minimal generator of $I(\D')^{\pol}$ such that there exists $s \ge 2$ (and necessarily $s \le w$) for which $(x_s,x_1) \in E(\D)$ and $x_{s,1}\nmid M'$. Consider $M_{i}$ for $i<t$.

If $i\leq u$ then $M_{i}$ is either $x_{1,1}M''$ or $y_{1,k'}M''$ where $M''$ is a minimal generator of $I(\D')^{\pol}$. Thus, $M_i:M_t$ is contained in the ideal generated by $x_{1,1}, y_{1,k'}$ and $M'':M'$. It is easy to see that $x_{1,1}M'$ and $x_{1,j}y_{1,k'}M'$ (if $k'<k$) are minimal generators of $I(\D)^{\pol}$, which comes before $M_{t}$ in our ordering.

Observe now that if $M_i = y_{1,k'}M''$ then, by the proof of Lemma \ref{MVCPolarizedNotCM}, for any $2 \le l \le w$, since $(x_l,x_1) \in E(\D)$, $M''$ must contain $x_{l,1}$. This implies that, in this case, $M'' > M'$ in our ordering of the generators of $I(\D')^{\pol}$ (since $x_{s,1} \nmid M'$). Furthermore, if $k' = k$ then $M'' > M'$ in the ordering for $I(\D')^{\pol}$ and, conversely, for any minimal generator $M'' > M'$ of $I(\D')^{\pol}$, either $y_{1,k}M''$ (if $x_{l,1} \big| M''$ for all $2 \le l \le w$) or $x_{1,j}y_{1,k}M''$ (otherwise) is a minimal generator of $I(\D)^{\pol}$, which comes before $M_t$ in our ordering. Hence, together with Theorem \ref{linear} for $\D'$, we have that $(M_{1},...,M_{t-1}):M_{t}$ is generated by a subset of the variables.

If $i > u$ then $M_i = x_{1,j'}y_{1,k'}M''$, where $2 \le j' < j \le \w(x_1)$, or $j' = j$ and $1 \le k' < k \le \w(y_1)$, or $j' = j$, $k' = k$ and $M''$ comes before $M'$ in the ordering for $I(\D')^{\pol}$. In this case, $M_i:M_t$ is contained in the ideal generated by $x_{1,j'}$, $y_{1,k'}$ and $M'':M'$. Observe that if $j' < j$ then clearly $x_{1,j'}y_{1,k}M'$ is a minimal generator of $I(\D)^{\pol}$ which comes before $M_t$. If $j' = j$ and $k' < k$ then $x_{1,j}y_{1,k'}M'$ is a minimal generator of $I(\D)^{\pol}$ which comes before $M_t$. If $j' = j, k' = k$ then $M''$ comes before $M'$ in the ordering of $I(\D')^{\pol}$. On the other hand, for any minimal generator $M'' > M$ of $I(\D')^{\pol}$, either $y_{1,k}M''$ (if $x_{l,1} \big| M''$ for all $2 \le l \le w$) or $x_{1,j}y_{1,k}M''$ (otherwise) is a minimal generator of $I(\D)^{\pol}$, which comes before $M_t$. Thus, we again have that $(M_1, \dots, M_{t-1}):M_t$ is generated by a subset of the variables.
\end{proof}

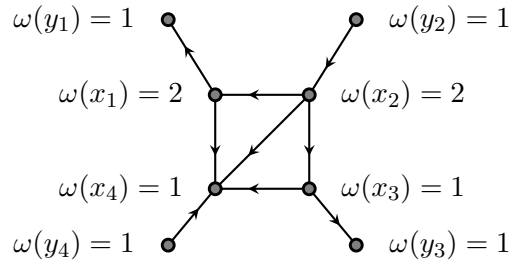
\begin{figure}[h]
\begin{center}
	\begin{tikzpicture}[line width=1.1pt,scale=1.25]
	\tikzstyle{every node}=[inner sep=0pt, minimum width=4.5pt]
	\draw[thick, directed]  (0,0)--(-0.5,.8); 
	\draw[thick, directed]  (1.5,0.8)--(1,0); 
	\draw[thick, directed]  (1,-1)--(1.5,-1.6); 
	\draw[thick, directed](-0.5,-1.6)--(0,-1); 
        \draw[thick, directed] (0,0)--(0,-1); 
        \draw[thick, directed] (1,0)--(0,0); 
        \draw[thick, directed]  (1,0)--(0,-1); 
	\draw[thick, directed] (1,0)--(1,-1); 
	\draw[thick, directed]  (1,-1)--(0,-1); 
		
	\draw (0,0) node (x1) [draw, circle, fill=gray] {};
	\draw (1,0) node (x2) [draw, circle, fill=gray] {};
	\draw (1,-1) node (x3) [draw, circle, fill=gray] {};
	\draw (0,-1) node (x4)[draw, circle, fill=gray] {};
	\draw (-0.5,0.8) node (y1) [draw, circle, fill=gray] {};
        \draw (1.5,0.8) node (y2) [draw, circle, fill=gray] {};
	\draw (1.5, -1.6) node (y3) [draw, circle, fill=gray] {};
	\draw (-0.5, -1.6) node (y4) [draw, circle, fill=gray] {};
	
	\node at (-1,0) {\small $\omega(x_{1})=2$};
	\node at (2,0) {\small $\omega(x_{2})=2$};
	\node at (2,-1) {\small $\omega(x_{3})=1$};
	\node at (-1,-1) {\small $\omega(x_{4})=1$};
	\node at (-1.5,0.8) {\small $\omega(y_{1})=1$};
	\node at (2.5,0.8) {\small $\omega(y_{2})=1$};
    \node at (2.5,-1.6) {\small $\omega(y_{3})=1$};
    \node at (-1.5,-1.6) {\small $\omega(y_{4})=1$};
	\end{tikzpicture}
\caption{Sequentially Cohen-Macaulay weighted oriented graph $\D$.}\label{graph2}
\end{center}
\end{figure}

\begin{example}
Let $\D$ be the weighted oriented graph depicted in Figure \ref{graph2}. Then
\[I(\D) = (x_1y_1,x_1^2x_2,x_1x_4,x_2^2y_2,x_2x_3,x_2x_4,x_3y_3,x_3x_4,x_4y_4).\]
The graph $\D$ fails condition (c) of Theorem \ref{cm-weighted-orientes-graphs} at $x_1$, and so $I(\D)$ is not Cohen-Macaulay. On the other hand, $I(\D)$ is sequentially Cohen-Macaulay by Theorem \ref{Clinear}.
\end{example}

\begin{remark}
In general, we expect $I(\D)^{\pol}$ to have linear quotients (and so, $I(\D)$ is sequentially Cohen-Macaulay) whenever $G$ has a perfect matching $\{x_1, y_1\}, \dots, \{x_r, y_r\}$ in which the $y_i$'s are leaf vertices. The argument, however, has proved to be much more subtle and involved than that of Theorem \ref{Clinear}; and we shall leave that to our future work.
\end{remark}


\section{Cohen--Macaulay weighted oriented
bipartite graphs}\label{bipartite-section}

In this section, we address \cite[Conjecture 5.5]{PRT} for weighted oriented bipartite graphs. Particularly, we give a complete classification for the Cohen-Macaulay property of edge ideals of weighted oriented bipartite graphs. For this class of graphs, the unmixedness of their edge ideals was already characterized in \cite{PRT}.

Our last main result of the paper is stated as follows.

\begin{theorem}\label{cm-weighted-oriented-bipartite-graph}
Let $\mathcal{D}$ be a weighted oriented bipartite graph without isolated
vertices, and let $G$ be its underlying graph.  Then
$\mathcal{D}$ is Cohen--Macaulay if and only if $G$ has a perfect matching $\{x_1,y_1\},\ldots,\{x_r,y_r\}$ such that
 the following conditions hold:
\begin{enumerate}
\item[(a)] $e_i=\{x_i,y_i\}\in E(G)$ for all $i$;
\item[(b)] if $\{x_i,y_j\}\in E(G)$, then $i\leq j$;
\item[(c)] if $\{x_i,y_j\}$, $\{x_j,y_k\}$ are in $E(G)$ and $i<j<k$,
then $\{x_i,y_k\}\in E(G)$;
\item[(d)] If $\w(y_j)\geq 2$ and
$N_\D^+(y_j)=\{x_{i_1},\ldots,x_{i_s}\}$, then $N_G(y_{i_\ell})\subset
N_\D^+(y_j)$ and all vertices of $N_\D^-(y_{i_\ell})$ have weight $1$ for $1\leq \ell\leq s$; and
\item[(e)] If $\w(x_j)\geq 2$ and
$N_\D^+(x_j)=\{y_{i_1},\ldots,y_{i_s}\}$, then $N_G(x_{i_\ell})\subset
N_\D^+(x_j)$ and all vertices of $N_\D^-(x_{i_\ell})$ have weight $1$ for $1\leq \ell\leq s$.
\end{enumerate}
\end{theorem}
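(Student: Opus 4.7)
The plan is to prove the theorem by induction on $r$. The necessity of conditions (a)--(e) is essentially sketched in the introduction: Cohen--Macaulayness of $R/I(\D)$ descends to $R/\sqrt{I(\D)} = R/I(G)$ by \cite[Theorem~2.6]{herzog-takayama-terai}, and the bipartite case of \cite[Theorem~3.4]{herzog-hibi-crit} then produces a perfect matching satisfying (a)--(c); Cohen--Macaulayness also forces unmixedness, from which (d) and (e) follow by \cite[Theorem~4.17(2)]{PRT}. The substantive content is sufficiency, which I would establish by induction on $r$, the base case $r=1$ being immediate since $R/I(\D)$ is then a hypersurface.

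For the inductive step, the plan is to work with the standard short exact sequence
\[
0 \to R/(I(\D):y_r)[-1] \xrightarrow{y_r} R/I(\D) \to R/(I(\D), y_r) \to 0,
\]
and to show that both outer modules are Cohen--Macaulay of dimension $r$; a standard depth-lemma argument then gives $\depth R/I(\D) \ge r = \dim R/I(\D)$ (using that $(x_1,\ldots,x_r)$ is a height-$r$ minimal vertex cover). The right-hand quotient is the easy piece: condition (b) forces $N_G(x_r) = \{y_r\}$, so after modding out by $y_r$ the variable $x_r$ is free and
\[
R/(I(\D), y_r) \cong K[x_r] \otimes_K R'/I(\D'),
\]
where $\D'$ is the induced oriented subgraph of $\D$ on $V(\D)\setminus\{x_r,y_r\}$. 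Conditions (a)--(e) clearly restrict to $\D'$ with the truncated matching $\{x_1,y_1\},\ldots,\{x_{r-1},y_{r-1}\}$, so the inductive hypothesis delivers the required Cohen--Macaulayness.

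The heart of the argument is the middle module $R/(I(\D):y_r)$. To handle it I would apply the same short exact sequence to $(I(\D):y_r)$ with respect to $y_r$, reducing the problem to the Cohen--Macaulayness of $R/((I(\D):y_r), y_r)$ and of $R/(I(\D):y_r^2)$. Using the weighted description of $I(\D)$, the minimal generators of $I(\D):y_r$ split cleanly into three families: (i) generators coming from edges of $\D$ not incident to $y_r$; (ii) the monomials $x_i y_r^{\w(y_r)-1}$, one for each in-edge $(x_i, y_r) \in E(\D)$; and (iii) the monomials $x_i^{\w(x_i)}$, one for each out-edge $(y_r, x_i) \in E(\D)$. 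I would then interpret both $((I(\D):y_r), y_r)$ and $(I(\D):y_r^2)$ as edge ideals of weighted oriented bipartite subgraphs of $\D$ on fewer than $r$ matching pairs, together with polynomial rings in isolated free variables (which affect neither depth nor codimension), and verify that these smaller oriented bipartite graphs still satisfy (a)--(e) so that induction closes.

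The main obstacle will be the combinatorial bookkeeping in this last identification, which splits into cases according to the orientation of the edge $\{x_r, y_r\}$, whether $\w(y_r) = 1$ or $\w(y_r) \ge 2$, and the orientations of the remaining edges at $y_r$. Conditions (d) and (e) become indispensable here: when $\w(y_r) \ge 2$, condition (d) applied to $y_r$ ensures that every out-neighbor $x_{i_\ell}$ satisfies $N_G(x_{i_\ell}) \subseteq N_\D^+(y_r)$ with all in-neighbors of weight one, which is precisely what is needed so that the type-(iii) generators $x_{i_\ell}^{\w(x_{i_\ell})}$ effectively delete the matched pairs $\{x_{i_\ell}, y_{i_\ell}\}$ in a way compatible with the restricted matching, without introducing obstructions to the inductive conditions; a symmetric argument using condition (e) handles the type-(ii) generators and the in-edges at $y_r$. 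Once these identifications are in place, the induction hypothesis applies to both auxiliary quotients and, via the two short exact sequences and the depth lemma, yields the desired Cohen--Macaulayness of $R/I(\D)$.
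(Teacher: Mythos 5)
Your high-level strategy matches the paper's: necessity via \cite[Theorem~2.6]{herzog-takayama-terai}, \cite[Theorem~3.4]{herzog-hibi-crit}, and \cite[Theorem~4.17]{PRT}; sufficiency by induction on $r$ using the short exact sequence for $y_r$, with the right-hand term handled via $(I(\D),y_r)=(I(\D\setminus\{x_r,y_r\}),y_r)$ and the left-hand term attacked by a second colon with $y_r$. Up to this point you are tracking the paper's proof exactly.

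The gap is in the step you describe as ``combinatorial bookkeeping,'' and it is a genuine one. You assert that $((I(\D):y_r),y_r)$ and $(I(\D):y_r^2)$ can be interpreted as edge ideals of weighted oriented bipartite \emph{subgraphs} of $\D$ plus isolated free variables. That is false as stated: your own decomposition of $I(\D):y_r$ produces type-(iii) generators $x_i^{\w(x_i)}$ for out-edges $(y_r,x_i)$, and when $\w(x_i)\geq 2$ these pure powers $x_i^2$ cannot come from the edge ideal of any weighted oriented graph (there are no loops), nor are they harmless free variables. The paper's resolution is substantially more involved: one first reduces to all nontrivial weights equal to $2$ (via \cite[Corollary~6]{VV}), then builds an auxiliary oriented graph $\H$ from $\D\setminus A$ by \emph{deleting} the in-edges $(y_j,x_i)$ with $x_i\in V''$, shows that certain $y_i$'s become isolated, constructs a \emph{new} oriented graph $\F$ by adding fresh whisker edges $(x_i,y_i)$ for $x_i\in V''$ and reassigning those vertices weight $1$, and then identifies a partial polarization of $I(\H)+(x_i^2\mid x_i\in V'')$ with $I(\F)$. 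Crucially, $\F$ is not an induced subgraph of $\D$: it has edges not present in $\D$ and different weights. Finally, verifying that $\F$ (and its analogue $\F_1$) satisfies (d)--(e) is itself nontrivial; the paper gets it by first deducing unmixedness of $I(\D)$ from (a),(d),(e) via \cite[Theorem~4.17]{PRT}, transferring unmixedness to the colon ideals and then to $I(\F)$, and applying \cite[Theorem~4.17]{PRT} in the reverse direction. Your proposal does not contain the polarization idea, does not construct the modified graphs, and does not supply a mechanism for certifying (d)--(e) for them, so the induction does not close as written. (Minor slip: where you cite condition (d) you write $N_G(x_{i_\ell})\subseteq N_\D^+(y_r)$; the condition concerns $N_G(y_{i_\ell})$.)
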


\begin{proof} Since the Cohen--Macaulay property is additive on
graded ideals in disjoint sets of variables
(see \cite[Lemma~4.1]{V}), we may assume that $G$ is a connected
bipartite graph. According to \cite[Corollary~6]{VV} the
Cohen--Macaulay property of $\mathcal{D}$ is dependent
only on knowing which vertices have weight greater than one and not
on the actual weights used. Thus, we may also assume that $\w(v)=2$ for
any vertex $v$ of $\mathcal{D}$ with $\w(v)>1$.

\noindent $(\Rightarrow$) We observe that the radical of $I(\mathcal{D})$ is $I(G)$. Therefore,
$I(G)$ is Cohen--Macaulay by \cite[Theorem~2.6]{herzog-takayama-terai}.
It follows from \cite[Theorem~3.4]{herzog-hibi-crit} that $G$ has a
perfect matching $\{x_1,y_1\},\ldots,\{x_r,y_r\}$ such that (a)-(c)
hold. Observe further that $I(\D)$ is Cohen-Macaulay, and so $I(\D)$ is unmixed (see \cite[Corollary~3.1.17]{monalg-rev}). Thus, (d)-(e) follow from \cite[Theorem~4.17(2)]{PRT}.

\noindent ($\Leftarrow$) Let $R=K[x_1,\ldots,x_r,y_1,\ldots,y_r]$ be a polynomial ring associated to $\D$. We proceed by induction on $r$. The case $r=1$ is
clear because $I(\mathcal{D})$ is generated by $x_1y_1$. Suppose that $r \ge 2$. It suffices to show that $(I(\D)\colon
y_r)$ and $(I(\D),y_r)$ are
Cohen--Macaulay ideals of dimension $r$.
Indeed, from the exact sequence
\begin{equation}\label{apr4-18}
0\rightarrow R/(I(\mathcal{D})\colon
y_r)[-1]\stackrel{y_r}{\rightarrow}
R/I(\mathcal{D})\rightarrow R/(I(\mathcal{D}),y_r)\rightarrow 0,
\end{equation}
and using the depth lemma \cite[Lemma~2.3.9]{monalg-rev}, we obtain that $I(\mathcal{D})$ is
Cohen--Macaulay.

Let $\D\setminus\{x_r,y_r\}$ be the digraph obtained from $\D$ by
removing the vertices $x_r$ and $y_r$ and all edges containing at
least one of them. The ideal $(I(\D),y_r)$ is Cohen--Macaulay of dimension $r$ because
$(I(\D),y_r)$ is equal to $(I(\D\setminus\{x_r,y_r\}),y_r)$ and
$I(\D\setminus\{x_r,y_r\})$ is Cohen--Macaulay by induction. Thus, the
proof reduces to showing that $(I(\D)\colon y_r)$ is Cohen--Macaulay.
From the exact sequence
$$
0\rightarrow R/((I(\D)\colon y_r)\colon y_r)[-1]\stackrel{y_r}{\rightarrow}
R/(I(\D)\colon y_r)\rightarrow R/((I(\D)\colon y_r),y_r)\rightarrow 0,
$$
to show that $(I(\D)\colon y_r)$ is Cohen--Macaulay,
we need only show that $((I(\D)\colon y_r)\colon y_r)$ and
$((I(\D)\colon y_r),y_r)$ are Cohen--Macaulay of dimension $r$.
Let $V^i$ be the set of vertices of $\D$ of weight $i$ for
$i=1,2$. Set $V'=N_\D^+(y_r)\cap V^1$ and $V''=N_\D^+(y_r)\cap V^2$.

\noindent{ \sc Case 1:} Assume that $\w(y_r)=2$. In
particular, $y_r$ cannot be a source, $(x_r,y_r)\in E(\D)$ (by (d)), and
$x_ry_r^2\in I(\D)$. Note that if $x_i\in N_\D^-(y_r)$
(respectively, $x_i\in V^\prime$, $x_i\in V''$), then $x_iy_r^2\in I(\D)$ (respectively,
$x_iy_r\in I(\D)$, $x_i^2y_r\in I(\D)$). Therefore, we have the equalities
\begin{align}
(I(\D)\colon y_r) &= (x_ry_r)+(x_iy_r\vert\, x_i\in
N_\D^-(y_r))+(V')+(x_i^2\vert\, x_i\in V'')+I(\D\setminus
V'),\label{apr17-1-18}\\
((I(\D)\colon y_r),y_r) &= (V',y_r)+(x_i^2\vert\, x_i\in
V'')+I(\D\setminus A)=(I(\D)\colon x_ry_r),\label{apr16-18}\\
((I(\D)\colon y_r)\colon y_r) &= (N_\D^-(y_r),V')+
(x_i^2\vert\, x_i\in V'')+I(\D\setminus B),\label{apr17-18}
\end{align}
where $A=V'\cup\{y_r\}$ and $B=N_\D^-(y_r)\cup V'$. Note
that $x_r\in B$ since $x_r \in N_\D^-(y_r)$.
Consider the weighted oriented graph $\H$ whose vertex set and edge set are
$V(\H)=V(\D\setminus A)$
and
$$
E(\H)=E(\D\setminus A)\setminus\{e\in E(\D)~\vert\, e=(y_j,x_i)\
\mbox{ and }\
x_i\in V''\},
$$
respectively.

We first show that $V_Y'=\{y_i\vert\, x_i\in V'\}$ is a
set of isolated vertices of $\H$, i.e., they are vertices that are
not in any edge of $\H$. Take $y_i\in V_Y'$ and assume that
$y_i$ is not isolated. Then there is $x_j\in V(\H)$, $x_j\notin A$,
such that $\{x_j,y_i\}$ is an edge of the underlying graph $H$ of
$\H$. By (b)-(c), $\{x_j,y_r\}$ is an edge of the underlying
graph $G$ of $\D$. Assume that $(y_r,x_j)\in E(\D)$. As $x_j\notin V'$,
we have $\w(x_j)=2$ and $x_j\in V''$. Hence, $(y_i,x_j)$ cannot be an edge of $\H$; that
is, $(x_j,y_i)\in E(\H)$. As $y_i\in N_\D^+(x_j)$, by (e),
$N_G(x_i)\subset N_\D^+(x_j)$ and, in particular, $y_r\in N_\D ^+(x_j)$
because $y_r\in N_\D^-(x_i)$; that is $(x_j,y_r)\in E(\D)$, a
contradiction. We may now assume $(x_j,y_r)\in E(\D)$. As $x_i\in
N_\D^+(y_r)$, by (d), $N_G(y_i)\subset N_\D^+(y_r)$. In
particular, as $x_j\in N_G(y_i)$, we get $x_j\in N_\D^+(y_r)$, that
is, $(y_r,x_j)$ is an edge of $\D$, a contradiction. This proves that
$V_Y'$ is isolated in $\H$.

We next show that $V_Y''=\{y_i\vert\, x_i\in V''\}$ is a
set of isolated vertices of $\H$. Take $y_i\in V_Y''$ and assume that
$y_i$ is not isolated. Then, there is $x_j\in V(\H)$, $x_j\notin A$,
such that $\{x_j,y_i\}$ is an edge of $H$. Assume that $\w(x_j)=2$. As $x_i\in N_\D^+(y_r)$, by
(d), we have $N_G(y_i)\subset N_\D^+(y_r)$. In particular, since
$x_j\in N_G(y_i)$, we get $x_j\in N_\D^+(y_r)$ and $x_j\in V''$. Thus,
$(y_i,x_j)$ cannot be an edge of $\H$; that is,
$(x_j,y_i)$ is an edge of $\H$. Then, $y_i\in N_\D^+(x_j)$ and, by (e), $N_G(x_i)\subset N_\D^+(x_j)$. In particular, as $y_r\in N_\D^-(x_i)$, we have
$(x_j,y_r)\in E(\D)$, contradicting that $x_j\in N_\D^+(y_r)$.
We may now assume that $\w(x_j)=1$. Then,
$x_j\notin N_\D^+(y_r)$ because $x_j\notin A$ and, by (c), $x_j\in
N_\D^-(y_r)$; that is, $(x_j,y_r)\in E(\D)$. Now, $x_i\in N_\D^+(y_r)$
because $x_i\in V''$. Then, by (d), $N_G(y_i)\subset N_\D^+(y_r)$.
In particular, as $x_j\in N_G(y_i)$, we have $x_j\in N_\D^+(y_r)$,
that is, $(y_r,x_j)$ is in $E(\D)$, a contradiction.

The vertex $x_r$ is also an isolated vertex of $\H$ and $y_r$ is not a
vertex of $\H$. Setting $L$ equal to $((I(\D)\colon y_r),y_r)$, from
Eq.~(\ref{apr16-18}) and noticing that the ideal $I(\D\setminus A)+(x_i^2\vert\, x_i\in
V'')$ is equal to $I(\H)+(x_i^2\vert\, x_i\in
V'')$,
we obtain
\begin{equation}\label{apr21-18-1}
L=((I(\D)\colon y_r),y_r)=(V',y_r)+I(\H)+(x_i^2\vert\, x_i\in
V'')=(I(\D)\colon x_ry_r).
\end{equation}

The vertex set of $\D$ has a decomposition
$V(\D)=V(\D\setminus A)\cup A$ with $|A|=|V'|+1$. To compute the
heights of $L$ and $I(\H)$ notice the following decomposition
\begin{equation}\label{apr21-18}
V(\H)=V(\D\setminus A)=\left(\bigcup_{x_i\notin V'\cup
V''\cup\{x_r\}}\hspace{-4mm}\{x_i,y_i\}\right)\cup\{x_r\}\cup V''\cup
V_Y'\cup V_Y''
\end{equation}
and recall that $V_Y'\cup V_Y''\cup\{x_r\}$ is a set of isolated
vertices of $\H$. The set $C_Y$ of all $y_i$ such that $y_i\notin
V_Y'\cup V_Y''$ is contained in $V(\H)$ and is a minimal vertex cover of $\H$. Indeed, if
$\{x_j,y_i\}$ is an edge of $H$, then $y_i$ is not isolated in $\H$,
and consequently $y_i\notin V_Y'\cup V_Y''$. Thus, $C_Y$ is a vertex
cover of $\H$. From Eq.~(\ref{apr21-18}) it follows that any vertex
cover of $\H$ has at least $|C_Y|=r-|V'|-|V''|-1$ elements.
Hence, ${\rm ht}(I(\H))=|C_Y|=r-|N_\D^+(y_r)|-1$ and
$$
{\rm ht}(L)=|V'|+1+{\rm ht}(I(\H))+|V''|=r.
$$
Thus, by Eq.~(\ref{apr21-18-1}), to show that $L=((I(\D)\colon y_r),y_r)$ is Cohen--Macaulay
of dimension $r$ it suffices to show that the ideal $L_1=I(\H)+(x_i^2\vert\, x_i\in
V'')$ is Cohen--Macaulay.

Let $\F$ be the weighted oriented bipartite graph with
$$
V(\F)=\left(\bigcup_{x_i\notin V'\cup
V''\cup\{x_r\}}\{x_i,y_i\}\right)\cup V''\cup V_Y''
$$
and $E(\F)=E(\H)\cup\{(x_i,y_i)\vert x_i\in V''\}$, where the
vertices of $V_Y''$ and $V''$ have weight $1$ (see the discussion below). Clearly, $\F$ has
a perfect matching, satisfies (b), and ${\rm ht}(I(\F))=r-|V'|-1$.

To see that $\F$ satisfies (c), consider the underlying graph $F$
of $\F$ and take $\{x_i,y_j\}$, $\{x_j,y_k\}$ in $E(F)$ with $i<j<k$.
Then, as $\D$  satisfies (c), $\{x_i,y_k\}$ is in $E(G)$. A vertex
$x_\ell$ is in $V(\F)$ if and only if
$y_\ell$ is in $V(\F)$. Thus, the vertices $x_i$ and $y_k$ are in
$V(\F)$. If $(x_i,y_k)$ is in $E(\D)$, then $(x_i,y_k)$ is in
$E(\H)\subset E(\F)$ and $\{x_i,y_k\}$ is an edge of $F$. Thus, we may
assume $(y_k,x_i)\in E(\D)$. If $\w(x_i)=1$, then $(y_k,x_i)$ is in
$E(\H)\subset E(\F)$. Now assume that $\w(x_i)=2$. Then, $(x_i,y_j)\in
E(\D)$ because $\{x_i,y_j\}\in E(F)$. As $y_j\in N_\D^+(x_i)$, by (e),
one has $N_G(x_j)\subset N_\D^+(x_i)$. In particular, since
$y_k\in N_G(x_j)$, we get $y_k\in N_\D^+(x_i)$. Thus, $(x_i,y_k)\in
E(\D)$, a contradiction. Altogether it follows that $\{x_i,y_k\}$ is
in $E(F)$ and $\F$ satisfies (c).

Consider the partial polarization $L_1^{\pol}$ obtained from $L_1$
by polarizing all variables $x_i^2$ with $x_i\in V''$. All vertices $x_i$ of $\H$ that are in $V''$
are sources by the construction of $\H$. Hence, all variables in $V''$ occur
in the minimal generating set of $I(\H)$ with exponent $1$. Therefore,
up to an appropriate change of variables, $I(\F)$ is the partial polarization
$L_1^{\pol}$ of $L_1$. Since $\D$ satisfies (a) and (d)-(e),
by \cite[Theorem~4.17]{PRT}, we get that $I(\D)$ is unmixed. Hence,
as $L$ is equal to $(I(\D)\colon x_ry_r)$ and $I(\D)$ is unmixed, we
get that $L$ is unmixed and so are $L_1$ and $L_1^{\pol}$.
Since $L_1^{\pol}$ is $I(\F)$, applying \cite[Theorem~4.17]{PRT}, we get that $\F$
satisfies (d)-(e). Altogether $\F$ satisfies (a)-(e).
Therefore, applying induction to $\F$, $I(\F)$ is
Cohen--Macaulay; that is, $L_1^{\pol}$ is Cohen--Macaulay.
Thus, $L_1$ is Cohen--Macaulay, as required.

The arguments for  $((I(\D):y_r):y_r)$ are similar. Consider the weighted oriented graph $\H_1$ whose vertex set and edge set are
$V(\D\setminus B)$
and
$$
E(\H_1)=E(\D\setminus B)\setminus\{e\in E(\D)\vert\, e=(y_j,x_i)\
\mbox{ and }\
x_i\in V''\},
$$
respectively.

We claim that $V_B'=\{y_i\vert\, x_i\in B\}$ is a
set of isolated vertices of $\H_1$. Take $y_i\in V_B'$. We proceed by
contradiction, assuming there is an edge $\{x_j,y_i\}$ in the
underlying graph $H_1$ of $\H_1$. Then, $x_j\notin B$. Since $x_i\in
B$, by (c), $\{x_j,y_r\}\in E(G)$. Then, as $x_j\notin B$, $x_j\in
N_\D^+(y_r)$ and $\w(x_j)=2$. Thus, $(x_j,y_i)\in E(\H_1)$; that is,
$y_i\in N_\D^+(x_j)$. Then, by (e), we get $N_G(x_i)\subset
N_\D^+(x_j)$. In particular, as $y_r\in N_G(x_i)$, we have
$(x_j,y_r)\in E(\D)$, contradicting to the fact that $x_j\in N_\D^+(y_r)$.

We next claim that the set $V_Y''=\{y_i\vert\, x_i\in V''\}$ is also isolated in
$\H_1$. Take $y_i\in V_Y''$. 
Clearly one has $i\neq r$
because $x_i\in V''$ but $x_r$ is a leaf of the underlying graph and thus a sink or a source, so $w(x_r)=1$. Assume that $y_i$ is not isolated in $\H_1$ and pick an edge
$\{x_j,y_i\}$ in $H_1$. Then, $y_i\notin A$ and
consequently $\{x_j,y_i\}$ is an edge of $H$, a contradiction because
we have previously shown that any vertex of $V_Y''$ is isolated in $H$.

The vertex $y_r$ is an isolated vertex of $\H_1$ because $y_r\in V_B'$
and $x_r$ is not a
vertex of $\H_1$. Setting $N$ equal to $((I(\D)\colon y_r)\colon y_r)$, from
Eq.~(\ref{apr17-18}) and noticing that $I(\D\setminus B)+(x_i^2\vert\, x_i\in
V'')$ is equal to $I(\H_1)+(x_i^2\vert\, x_i\in
V'')$,
we obtain
$$
N=((I(\D)\colon y_r)\colon y_r)=(B)+I(\H_1)+(x_i^2\vert\, x_i\in
V'').
$$

As $\D$ satisfies (a) and (d)-(e),
by \cite[Theorem~4.17]{PRT}, $I(\D)$ is unmixed of height $r$. Hence,
the ideal $N_1=I(\H_1)+(x_i^2\vert\, x_i\in
V'')$ has height $r-|N_\D^-(y_r)|-|V'|$. Thus, to show that
the ideal $N=((I(\D)\colon y_r)\colon y_r)$ is Cohen--Macaulay of height $r$ it
suffices to show that the ideal $N_1=I(\H_1)+(x_i^2\vert\, x_i\in
V'')$ is Cohen--Macaulay. We have the decomposition
\begin{equation}\label{apr21-18-2}
V(\H_1)=V(\D\setminus B)=\left(\bigcup_{x_i\notin B\cup
V''}\hspace{-2mm}\{x_i,y_i\}\right)\cup V_B'\cup V_Y''\cup V'',
\end{equation}
where $V_B'\cup V_Y''$ is a set of isolated
vertices of $\H_1$.

Let $\F_1$
be the weighted oriented bipartite graph with
$$
V(\F_1)=\left(\bigcup_{x_i\notin B\cup
V''}\{x_i,y_i\}\right)\cup V_Y''\cup V''
$$
and $E(\F_1)=E(\H_1)\cup\{(x_i,y_i)\vert x_i\in V''\}$, where the
vertices of $V_Y''$ and $V''$ have weight $1$. It is seen that $\F_1$
satisfies (a)-(c). Consider the partial polarization $N_1^{\pol}$
obtained from $N_1$
by polarizing all variables $x_i^2$ with $x_i\in V''$. All vertices
$x_i$ of $\H_1$ that are in $V''$
are sources by construction of $\H_1$. Hence, all variables in $V''$ occur
in the minimal generating set of $I(\H_1)$ with exponent $1$. Therefore,
up to an appropriate change of variables, $I(\F_1)$ is the partial polarization
$N_1^{\pol}$ of $N_1$. Since $\D$ satisfies (a) and (d)-(e),
by \cite[Theorem~4.17]{PRT}, we get that $I(\D)$ is unmixed. Hence,
as $N$ is equal to $((I(\D)\colon y_r)\colon y_r)$ and $I(\D)$ is unmixed, we
get that $N$ is unmixed and so are $N_1$ and $N_1^{\pol}$.
Since $N_1^{\pol}$ is $I(\F_1)$, applying \cite[Theorem~4.17]{PRT} we
get that $\F_1$
satisfies (d)-(e). Altogether $\F_1$ satisfies (a)-(e).
Therefore, applying induction to $\F_1$, $I(\F_1)$ is
Cohen--Macaulay, that is, $N_1^{\pol}$ is Cohen--Macaulay.
Thus, $N_1$ is Cohen--Macaulay, as required.

\noindent{\sc Case 2:} Assume that $\w(y_r)=1$.
The proof of this case is similar to that of Case 1. We shall only identify the differences between the two cases and leave the details for the interested reader.
The equalities~(\ref{apr17-1-18})-(\ref{apr17-18}), for Case 2, will be replaced with the following equalities
\begin{eqnarray}
(I(\D)\colon y_r) & = &(N_\D^-(y_r))+(V')+(x_i^2\vert\, x_i\in V'')+I(\D\setminus (V'\cup N_\D^-(y_r))),\label{apr26-1}\\
((I(\D)\colon y_r),y_r) &=& (V',N_\D^-(y_r),y_r)+(x_i^2\vert\, x_i\in
V'')+I(\D\setminus A'),\label{apr26-2}\\
((I(\D)\colon y_r)\colon y_r) &=& (V',N_\D^-(y_r))+
(x_i^2\vert\, x_i\in V'')+I(\D\setminus B),\label{apr26-3}
\end{eqnarray}
where $A'=V'\cup N_\D^-(y_r)\cup\{y_r\}$ and $B=V'\cup N_\D^-(y_r)$. Observe that (\ref{apr26-2}) and (\ref{apr26-3}) are both similar to (\ref{apr17-18}) and, thus, the proof for $((I(\D):y_r):y_r)$ in Case 1 shall be employed to prove the Cohen-Macaulayness of both $((I(\D):y_r), y_r)$ and $((I(\D):y_r):y_r)$ in Case 2.
\end{proof}

It can be seen that if $\mathcal{D}$ is a Cohen--Macaulay weighted
oriented graph, then $I(\mathcal{D})$ is unmixed
(see \cite[Corollary~3.1.17]{monalg-rev}) and
$\sqrt{I(\mathcal{D})}$ is Cohen--Macaulay
(see \cite[Theorem~2.6]{herzog-takayama-terai}).
It is an open question whether the converse is true.

\begin{conjecture}[\protect{\cite[Conjecture~5.5]{PRT}}] \label{conj.PRT} Let $\D$ be a weighted
oriented graph and let $G$ be its underlying graph. If $I(\D)$ is
unmixed and $I(G)$ is Cohen--Macaulay, then $I(\D)$ is Cohen--Macaulay.
\end{conjecture}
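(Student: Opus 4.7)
The plan is to mimic the inductive short-exact-sequence argument of Theorem~\ref{cm-weighted-oriented-bipartite-graph}. After reducing (via \cite[Lemma~4.1]{V} and \cite[Corollary~6]{VV}) to the case where $G$ is connected and every weight lies in $\{1,2\}$, I would induct on $|V(\D)|$. Since $I(G)$ is Cohen--Macaulay, $G$ admits a perfect matching; the first step is to select a vertex $y$ whose removal and whose closed-neighborhood removal both leave the underlying graph Cohen--Macaulay (a shedding or simplicial-type vertex). With such a $y$, apply
\[
0 \to R/(I(\D):y)[-1] \stackrel{y}{\to} R/I(\D) \to R/(I(\D),y) \to 0,
\]
and use the depth lemma to reduce the claim to showing that $(I(\D),y)$ and $(I(\D):y)$ are both Cohen--Macaulay of the correct dimension $r=\height I(\D)$.

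For $(I(\D),y) = (I(\D\setminus y), y)$, the induction hypothesis applies once one verifies that $G\setminus y$ is Cohen--Macaulay (by the choice of $y$) and that $I(\D\setminus y)$ is unmixed; the latter follows from the correspondence between strong vertex covers of $\D\setminus y$ and those of $\D$ containing $y$ via Theorem~\ref{prt-main} and Lemma~\ref{StrongVC}. For $(I(\D):y)$, iterate the same sequence with $y$ to reduce to $((I(\D):y):y)$ and $((I(\D):y),y)$. As in the bipartite case, after partially polarizing the $x_i^2$ terms coming from weight-$2$ out-neighbors of $y$, these two ideals should be realized as edge ideals of auxiliary weighted oriented graphs $\F$ and $\F_1$ built from $\D\setminus N^+(y)$ and $\D\setminus N[y]$ by attaching whiskers at the weight-$2$ neighbors of $y$, so that Theorem~\ref{cm-weighted-orientes-graphs} and the inductive hypothesis apply.

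The main obstacle is checking that these auxiliary graphs satisfy both hypotheses of the conjecture simultaneously. Unmixedness should transfer through a careful analysis of strong vertex covers (Theorem~\ref{prt-main} and Lemma~\ref{StrongVC}), since every strong vertex cover of $\F$ can be lifted to one of $\D$ in a controlled fashion. The delicate point is the Cohen--Macaulayness of the underlying graph of $\F$: by \cite{V}, adding a whisker at \emph{every} vertex of an arbitrary graph yields Cohen--Macaulayness, but our construction whiskers only the weight-$2$ neighbors, so one additionally needs $G\setminus N^+(y)$ (respectively $G\setminus N[y]$) to be Cohen--Macaulay. Such a shedding vertex $y$ is guaranteed for bipartite, chordal, and vertex-decomposable $G$, but not for arbitrary Cohen--Macaulay graphs, which is precisely why Conjecture~\ref{conj.PRT} remains open in full generality. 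An alternative route, avoiding the vertex-selection issue, would be to extend Theorem~\ref{linear} by proving directly that $I(\D)^{\pol}$ has dual linear quotients: order the minimal vertex covers of $H(I(\D)^{\pol})$ lexicographically with respect to a shelling order of the independence complex of $G$ (available from the Cohen--Macaulay hypothesis on $I(G)$), then replace the whisker-specific move-down operations (Corollaries~\ref{moveDownCM} and \ref{moveDownCM2}) with analogous exchanges compatible with that shelling. The technical heart in either route is the same: converting the combinatorial data of a Cohen--Macaulay graph $G$ into enough control over the strong vertex covers of $\D$ to guarantee purity of the polarized Alexander dual.
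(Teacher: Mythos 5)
This statement is a conjecture (\cite[Conjecture~5.5]{PRT}), and the paper does not prove it in full generality; it only establishes the bipartite case as Corollary~\ref{cor.PRTconj}, which is obtained by matching the hypotheses of Theorem~\ref{cm-weighted-oriented-bipartite-graph} against \cite[Theorem~3.4]{herzog-hibi-crit} (for the Cohen--Macaulayness of $I(G)$) and \cite[Theorem~4.17]{PRT} (for the unmixedness of $I(\D)$). Your proposal is not a proof, and you are candid about that: you correctly locate the obstruction in the inductive short-exact-sequence argument, namely the need for a vertex $y$ such that both $G\setminus y$ and $G\setminus N[y]$ remain Cohen--Macaulay. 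In the bipartite setting this is supplied implicitly by the Herzog--Hibi structure theorem, which gives an explicit perfect matching and vertex ordering that makes $y_r$ such a vertex; for an arbitrary Cohen--Macaulay graph no such distinguished vertex is guaranteed, which is why the paper leaves the general conjecture open. Your alternative route through dual linear quotients of $I(\D)^{\pol}$ faces the analogous obstacle: it would require a shelling order on the independence complex of $G$, but Cohen--Macaulay graphs need not be shellable. So your write-up is an accurate diagnosis of where the known techniques stop, not a resolution; if you intend it as a contribution, it should be framed as a reduction of the conjecture to (for instance) the vertex-decomposable or shellable case rather than as a proof of the statement.
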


One may ask whether or not an unmixed monomial ideal $I$ of graph
type (i.e., a monomial ideal $I$ whose
radical is generated by square-free monomials of degree $2$) with
Cohen--Macaulay radical is Cohen--Macaulay.

As a consequence of Theorem \ref{cm-weighted-oriented-bipartite-graph}, we show that Conjecture \ref{conj.PRT} holds for weighted oriented
bipartite graphs.

\begin{corollary} \label{cor.PRTconj}
Let $\D$ be a weighted oriented bipartite graph
without isolated vertices and
let $G$ be its underlying graph. If $I(\D)$ is unmixed and $I(G)$ is
Cohen--Macaulay, then $I(\D)$ is Cohen--Macaulay.
\end{corollary}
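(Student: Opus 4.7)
The plan is to deduce this corollary directly from Theorem \ref{cm-weighted-oriented-bipartite-graph}, which already provides a combinatorial characterization of the Cohen--Macaulay property for weighted oriented bipartite graphs in terms of the five conditions (a)--(e). The strategy is simply to verify that the hypotheses ``$I(\D)$ unmixed'' and ``$I(G)$ Cohen--Macaulay'' together imply all five conditions.

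First, I would invoke the Herzog--Hibi criterion \cite[Theorem~3.4]{herzog-hibi-crit}: since $G$ is a bipartite graph with no isolated vertices and $I(G)$ is Cohen--Macaulay, $G$ admits a perfect matching $\{x_1,y_1\},\dots,\{x_r,y_r\}$ satisfying conditions (a), (b) and (c) of Theorem \ref{cm-weighted-oriented-bipartite-graph}. This handles the purely combinatorial conditions on the underlying graph.

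Next, using the assumption that $I(\D)$ is unmixed, I would apply \cite[Theorem~4.17(2)]{PRT} to the weighted oriented bipartite graph $\D$ with respect to the perfect matching obtained in the previous step; this directly yields conditions (d) and (e), which concern the weight function together with the in- and out-neighborhoods of heavy vertices. Having established (a)--(e), Theorem \ref{cm-weighted-oriented-bipartite-graph} then gives that $\D$ is Cohen--Macaulay, which completes the proof.

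There is no real obstacle here: the corollary is essentially a re-packaging of Theorem \ref{cm-weighted-oriented-bipartite-graph}, since both the ``necessary'' direction of that theorem (which was proved using exactly the two cited results) and the converse have already been established. The only mild subtlety is to ensure that the perfect matching produced by \cite[Theorem~3.4]{herzog-hibi-crit} is the same one that is required for the hypothesis of \cite[Theorem~4.17(2)]{PRT}; but (a)--(c) determine the matching up to the labelling allowed in \cite{PRT}, so one may fix this matching at the outset and apply both cited results with respect to it.
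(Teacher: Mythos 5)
Your proposal is correct and follows essentially the same route as the paper's proof: invoke \cite[Theorem~3.4]{herzog-hibi-crit} to obtain conditions (a)--(c) from the Cohen--Macaulayness of $I(G)$, invoke \cite[Theorem~4.17]{PRT} to obtain conditions (d)--(e) from the unmixedness of $I(\D)$, and then apply Theorem~\ref{cm-weighted-oriented-bipartite-graph}. Your remark about keeping the perfect matching consistent between the two cited results is a sensible point that the paper handles implicitly.
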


\begin{proof} By \cite[Theorem~3.4]{herzog-hibi-crit},
conditions (a)-(c) hold if and only if $I(G)$ is Cohen--Macaulay. On
the other hand, by \cite[Theorem~4.17]{PRT}, conditions (a) and
(d)-(e) hold if and only if $I(\D)$ is unmixed. 
\end{proof}

\begin{remark}
Under the conditions of Theorem~\ref{cm-weighted-oriented-bipartite-graph}, if $(y_t,x_t)\in E(\mathcal{D})$ for some $t$ (respectively, $(x_t,y_t)\in
E(\mathcal{D})$ for some $t$), then $\w(y_t)=1$ (respectively,
$\w(x_t)=1$); that is, the tail of any edge in the perfect matching has
weight $1$. To see this, suppose that $(y_t,x_t)\in E(\D)$ (the case $(x_t,y_t)\in E(\D)$ is similar). We
proceed by contradiction, assuming that $\w(y_t)\geq 2$. In particular,
$y_t$ is not a source; that is, there is $(x_i,y_t)\in E(\D)$. Since
$x_t\in N_\D^+(y_t)$, by (d), $N_G(y_t)\subset N_\D^+(y_t)$. Hence,
as $x_i\in N_G(y_t)$, we get $x_i\in N_\D^+(y_t)$, a contradiction.
\end{remark}

\begin{example}\label{vila-example}
The edge ideal $I(\D)$ of the weighted oriented graph $\D$ of
Fig.~\ref{graph1} is Cohen--Macaulay and is generated by
$x_1^2y_1,x_1y_2,x_1y_3,x_1^2y_4,x_2^2y_2,x_2y_3,x_3y_3,x_4y_4^2$.
\end{example}
\begin{figure}[h]
\begin{center}
	\begin{tikzpicture}[line width=1.1pt,scale=0.95]
	\tikzstyle{every node}=[inner sep=0pt, minimum width=4.5pt]
        \draw[thick, directed] (-1.5,-0.9)--(-1.5,.9); 
	 \draw[thick, directed]  (-1.5,1)--(0.42,-0.94); 
	\draw[thick, directed]  (-1.5,1)--(2.407,-1.055); 
        \draw[thick, directed]  (4.41,-0.96)--(-1.43,1); 
	 \draw[thick, directed] (0.5,-1)--(0.5,0.9); 
	  \draw[thick, directed] (0.5,1)--(2.467,-0.901); 
	  \draw[thick, directed] (2.5,-1)--(2.5,0.9); 
	\draw[thick, directed] (4.5,1)--(4.5,-0.9); 
   
	\draw (-1.5,-1) node (v1) [draw, circle, fill=gray] {};
	\draw (0.5,-1) node (v2) [draw, circle, fill=gray] {};
	\draw (2.5,1) node (x3) [draw, circle, fill=gray] {};
	\draw (0.5,1) node (x2)[draw, circle, fill=gray] {};
	\draw (-1.5,1) node (x1) [draw, circle, fill=gray] {};
        \draw (2.5,-1) node (x1) [draw, circle, fill=gray] {};
	\draw (4.5, 1) node (xg) [draw, circle, fill=gray] {};
	\draw (4.5, -1) node (vg) [draw, circle, fill=gray] {};

	\node at (-1.7,-1.3) {\small $\omega(y_{1})=1$};
	\node at (0.3,-1.3) {\small $\omega(y_{2})=1$};
	\node at (2.3,-1.3) {\small $\omega(y_{3})=1$};
	\node at (0.3,1.3) {\small $\omega(x_{2})=2$};
	\node at (-1.7,1.3) {\small $\omega(x_{1})=2$};
	\node at (2.3,1.3) {\small $\omega(x_{3})=1$};
    \node at (4.3,1.3) {\small $\omega(x_{4})=1$};
    \node at (4.3,-1.3) {\small $\omega(y_{4})=2$};
	
	\end{tikzpicture}

\caption{Cohen-Macaulay weighted oriented bipartite graph $\D$.}\label{graph1}
\end{center}
\end{figure}
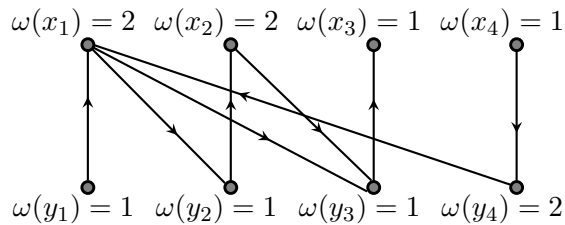

\bibliographystyle{plain}

\begin{thebibliography}{10}

\bibitem{digraphs} J. Bang-Jensen and G. Gutin, {\it Digraphs}. {\it Theory,
Algorithms and Applications\/}, Springer Monographs in Mathematics,
Springer, 2006.


\bibitem{carvalho-lopez-lopez}
C. Carvalho, G. G. Lopez Neumann, H. H. L\'opez,
Projective nested Cartesian codes. Bull. Braz. Math. Soc. (N.S.)  {\bf
48} (2017),  no. 2, 283-302.

\bibitem{ER} J.A. Eagon and V. Reiner, Resolution of Stanley-Reisner rings and Alexander duality,
J. Pure Appl. Algebra {\bf 130} (1998), 265-275.

\bibitem{Faridi2} S. Faridi,
Cohen-Macaulay properties of square-free monomial ideals, J. Combin. Theory Ser. A {\bf 109} (2005), no. 2, 299-329.

\bibitem{Faridi} S. Faridi,
Monomial ideals via square-free monomial ideals,  Commutative
algebra, 85-114, Lect. Notes Pure Appl. Math., 244, Chapman and Hall CRC, Boca Raton, FL, 2006.



\bibitem{Froberg} R. Fr\"oberg, A study of graded extremal rings and of monomial rings,
Math. Scand. {\bf 51} (1982), 22-34.

\bibitem{VV} P. Gimenez, J. Mart\'\i nez-Bernal, A.
Simis, R. H. Villarreal and C. E. Vivares, Symbolic powers of
monomial ideals and Cohen--Macaulay
vertex-weighted digraphs, special volume dedicated to Antonio
Campillo, Springer, to appear.

\bibitem{Har}{F. Harary, {\it Graph Theory\/}, Addison-Wesley, Reading, MA, 1972.}

\bibitem{Herzog-Hibi} J. Herzog and T. Hibi, Componentwise linear
ideals, Nagoya Math. J. {\bf 153} (1999),
141-153.

\bibitem{herzog-hibi-crit} J. Herzog and T. Hibi,
Distributive lattices, bipartite graphs and Alexander duality,
J. Algebraic Combin. {\bf 22} (2005), no. 3, 289--302.



\bibitem{Herzog-Hibi-book} J. Herzog and T. Hibi, {\it Monomial Ideals}, Graduate
Texts in  Mathematics {\bf 260}, Springer, 2011.

\bibitem{herzog-takayama-terai} J. Herzog, Y. Takayama and N. Terai,
On the radical of a monomial ideal, Arch. Math. {\bf 85} (2005),
397--408.



\bibitem{hilbert-min-dis} J. Mart\'\i nez-Bernal, Y. Pitones and R. H. Villarreal, Minimum distance functions of graded ideals and Reed-Muller-type codes, J. Pure Appl. Algebra {\bf 221} (2017), 251-275.

\bibitem{Mat} H. Matsumura, {\it Commutative Ring Theory}, Cambridge Stud. Adv. Math. {\bf 8}, Cambridge Univ. Press, Cambridge, 1986.


\bibitem{PS}  C. Paulsen and S. Sather-Wagstaff, Edge ideals of weighted graphs, J. Algebra Appl. {\bf 12} (2013), no. 5, 1250223.

\bibitem{Peeva}{I.~Peeva, Graded syzygies, Algebra and Applications,
{\bf14}, Springer-Verlag London Ltd., London, 2011.}

\bibitem{PRT} Y. Pitones, E. Reyes and J. Toledo, Monomial ideals
of weighted oriented graphs. Preprint, 2017, arXiv:1710.03785v1.

\bibitem{V} R.H. Villarreal, Cohen–Macaulay graphs, Manuscripta Math. {\bf 66} (1990), no. 3, 277-293.

\bibitem{monalg-rev} R. H. Villarreal, {\it Monomial Algebras, Second Edition\/},
Monographs and Research Notes in Mathematics, Chapman and Hall/CRC, 2015.
\end{thebibliography}

\end{document}